\crefname{thmA}{theorem}{theorems}
\Crefname{thmA}{Theorem}{Theorems}
\crefname{corA}{corollary}{corollaries}
\Crefname{corA}{Corollary}{Corollaries}
\numberwithin{equation}{section} 
\newtheorem{theorem}{Theorem}[section]
\newtheorem{proposition}[theorem]{Proposition}
\newtheorem{lemma}[theorem]{Lemma}
\newtheorem{corollary}[theorem]{Corollary}
\theoremstyle{remark}
\newtheorem{remark}[theorem]{Remark}
\newtheorem{remarks}[theorem]{Remarks}
\newtheorem{example}[theorem]{Example}
\theoremstyle{definition}
\newtheorem{definition}[theorem]{Definition}
\def\Z{\mathbb Z}
\def\N{\mathbb N}
\def\R{\mathbb R}
\def\Q{\mathbb Q}
\def\G{\Gamma}
\def\e{\varepsilon}
\def\-{\overline}
\def\wh{\widehat}
\def\aut{{\rm{Aut}}}
\def\autN{{\rm{Aut}}(F_N)}
\def\out{{\rm{Out}}} 
\def\outN{{\rm{Out}}(F_N)} 
\def\tsc{{\rm{\bf{fsc}}}}
\def\GG{\mathbb{G}}
\def\mc{\mathcal}
\def\sym{{\rm{sym}}}
\def\st{\colon} %%% this needs fixing!
\def\d{\delta}
\def\t{\tau}
\def\-{\overline}
\def\b{\beta}
\def\La{\Lambda}
\def\ad{{\rm{ad}}}
\def\H{\mathbb{H}}
\def\G{\Gamma}
\def\<{\langle}
\def\>{\rangle}
\begin{document}

\title[Profinite rigidity for free-by-cyclic groups with centre]
{Profinite rigidity for free-by-cyclic groups with centre}

% author  information
\author[Martin R. Bridson and Pawe\l{} Piwek]
{Martin R.~Bridson and Pawe\l{} Piwek}
\address{Mathematical Institute\\
	Andrew Wiles Building\\
	ROQ, Woodstock Road\\
	Oxford\\
	United Kingdom}
\email{bridson@maths.ox.ac.uk,\ piwek@maths.ox.ac.uk}

\date{Minor edits 30 Sept 2024}

\keywords{Free-by-cyclic groups, profinite rigidity}

\begin{abstract} 
	A free-by-cyclic group $F_N\rtimes_\phi\Z$ has non-trivial centre if and only if 
	$[\phi]$ has finite order in ${\rm{Out}}(F_N)$. 
	We establish a profinite ridigity result for such groups: 
	if $\Gamma_1$ is a free-by-cyclic group with non-trivial centre 
	and $\Gamma_2$ is a finitely generated free-by-cyclic group 
	with the same finite quotients as $\Gamma_1$, 
	then $\Gamma_2$ is isomorphic to $\Gamma_1$. 
	One-relator groups with centre are similarly rigid. 
	We prove that finitely generated free-by-(finite cyclic) groups 
	are profinitely rigid in the same sense; 
	the proof revolves around a finite poset $\tsc(G)$ 
	that carries information about the centralisers of finite subgroups of $G$ 
	-- it is a complete invariant for these groups.  
	These results provide contrasts with 
	the lack of profinite rigidity among 
	surface-by-cyclic groups
	and (free abelian)-by-cyclic groups, 
	as well as general virtually-free groups. 
\end{abstract}

\maketitle

\section{Introduction}
\label{s:introduction}

It is unknown whether free-by-cyclic groups $F\rtimes\Z$, 
with $F$ finitely generated, 
can be distinguished from each other by their finite quotients. 
Likewise, it is unknown whether 
residually-finite 1-relator groups 
can be distinguished from each other by their finite quotients. 
In this article we will resolve these questions 
in the case of groups with non-trivial centre. 
For free-by-cyclic groups, the precise statement is as follows:

\begin{restatable}{thmA}{thmmain}
\label{t:main}
	If a pair of free-by-cyclic groups 
	$\Gamma_1=F_{N}\rtimes_{\phi_1}\Z$ and $\Gamma_2=F_{M}\rtimes_{\phi_2}\Z$ 
	have the same finite quotients 
	and $[\phi_1]\in{\rm{Out}}(F_N)$ has finite order, 
	then $[\phi_2]\in{\rm{Out}}(F_M)$ has finite order 
	and $\Gamma_1\cong\Gamma_2$.
\end{restatable}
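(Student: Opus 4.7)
The plan is to reduce to the profinite rigidity theorem for free-by-(finite cyclic) groups announced in the abstract, by passing in each $\Gamma_i$ to the quotient by its centre. The two substantive tasks are (a) showing that $\Gamma_2$ also has non-trivial centre (equivalently, $[\phi_2]$ has finite order), and (b) lifting the resulting isomorphism of central quotients to an isomorphism $\Gamma_1\cong\Gamma_2$.

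For (a): since $[\phi_1]$ has order $k$, write $\phi_1^k=\ad(w)$ for some $w\in F_N$; then $w^{-1}t^k$ generates an infinite-cyclic centre of $\Gamma_1$ and $\langle F_N,\,w^{-1}t^k\rangle\cong F_N\times\Z$ has index $k$ in $\Gamma_1$. Consequently $\hat\Gamma_1$, and hence $\hat\Gamma_2$, contains an open subgroup isomorphic to $\wh{F_N}\times\wh{\Z}$. I would let $H_2$ be its preimage in $\Gamma_2$: a finite-index free-by-cyclic subgroup $F_K\rtimes_\psi\Z$ with $\wh{H_2}\cong\wh{F_N}\times\wh{\Z}$. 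Since $\wh{F_N}$ has trivial centre for $N\geq 2$, $Z(\wh{H_2})=\wh{\Z}$; combining this with residual finiteness of $H_2$ and the standard description of centralisers in $F_K\rtimes_\psi\Z$ (the centraliser of $F_K$ is non-trivial precisely when some power of $\psi$ is inner) forces $[\psi]$ to have finite order in $\out(F_K)$. An elementary centraliser argument in $F_M$ then shows that any $\alpha\in\aut(F_M)$ fixing a finite-index subgroup pointwise must be the identity, from which I would deduce that a power of $\phi_2$ is already inner on $F_M$, so $[\phi_2]$ has finite order.

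For (b): both $\Gamma_i$ now fit into central extensions $1\to\Z\to\Gamma_i\to\bar\Gamma_i\to 1$ with $\bar\Gamma_i$ finitely generated free-by-(finite cyclic). The virtual direct-product structure on each side gives $Z(\hat\Gamma_i)=\wh{Z(\Gamma_i)}$, so the isomorphism $\hat\Gamma_1\cong\hat\Gamma_2$ descends to $\wh{\bar\Gamma_1}\cong\wh{\bar\Gamma_2}$; the paper's rigidity theorem for free-by-(finite cyclic) groups then yields $\bar\Gamma_1\cong\bar\Gamma_2$. To upgrade this to $\Gamma_1\cong\Gamma_2$, I would classify the two central extensions by classes in $H^2(\bar\Gamma_i;\Z)$ and match them using goodness of virtually free groups, so that their images in the continuous cohomology $H^2_c(\wh{\bar\Gamma_i};\wh{\Z})$ are paired by the profinite isomorphism.

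The main obstacle is (a): centrality in $\hat\Gamma_2$ does not automatically descend to $\Gamma_2$, and the transfer via the open subgroup $H_2$ relies both on the structural description of centralisers in free-by-cyclic groups and on the restriction property of $\aut(F_M)$ above. Step (b) should be largely formal once (a) is secured and the goodness-based cohomological dictionary is in place.
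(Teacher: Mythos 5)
Your top-level strategy --- kill the centre, apply the free-by-(finite cyclic) rigidity theorem, then lift --- is the same as the paper's, but both of your substantive steps have real gaps. For (a), the difficulty you flag is exactly the one your sketch fails to resolve. Knowing $Z(\wh{H}_2)\cong\wh{\Z}$ does not let you conclude $Z(H_2)\neq 1$: writing $\wh{H}_2\cong\wh{F}_K\rtimes\wh{\Z}$, the centre could a priori be generated by elements $x\kappa$ with $\kappa\in\wh{\Z}\smallsetminus\Z$ acting on $\wh{F}_K$ as an inner automorphism, without any \emph{integer} power of $\psi$ being inner; the ``standard description of centralisers in $F_K\rtimes_\psi\Z$'' only converts $Z(H_2)\neq 1$ into a statement about $\psi$, so invoking it here is circular, and residual finiteness alone gives nothing. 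The paper closes this gap with an $\ell_2$-Betti number argument (\Cref{l:l2centre}): if $Z(\G_2)$ were finite it would be trivial (as $Z(\wh{\G}_1)\cong\wh{\Z}$ is torsion-free), so $\G_2$ would embed densely in $\wh{\G}_1/Z(\wh{\G}_1)\cong\wh{\G_1/Z(\G_1)}$, which contains $\wh{F}_N$ with finite index; a dense finitely presented subgroup of such a group has $\beta_1^{(2)}>0$, whereas $\beta_1^{(2)}(\G_2)=0$ because $F_M$ is an infinite finitely generated normal subgroup of infinite index. You need this input (or an equivalent); your step from ``$[\psi]$ has finite order'' to ``$[\phi_2]$ has finite order'' is fine, but it is the easy half.

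For (b), the cohomological matching does not come out aligned the way you need. Goodness identifies the extension classes $c_i\in H^2(\-\G_i;\Z)$ (a finite group) with classes in $H^2_c(\wh{\-\G}_i;\wh\Z)$, and the profinite isomorphism pairs $c_1$ with $c_2$ --- but only up to (i) the action of an automorphism of $\wh{\-\G}_1$ that need not be induced by any abstract automorphism of $\-\G_1$, and (ii) multiplication by a unit of $\wh\Z$, which acts on the exponent-$n$ group $H^2(\-\G_1;\Z)$ through all of $(\Z/n)^\times$, whereas a genuine isomorphism $\G_1\cong\G_2$ only allows the sign $\pm 1$ on the centre. To conclude you would need $c_2$ to lie in the $\aut(\-\G_1)\times\{\pm1\}$-orbit of $c_1$, and your argument only places it in the much larger profinite orbit. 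This discrepancy is not a technicality: it is precisely the mechanism behind Hempel's non-rigid surface bundles $\Sigma\rtimes_{\phi}\Z\not\cong\Sigma\rtimes_{\phi^k}\Z$, and in the free case it is defused only because $F_N\rtimes_\phi\Z\cong F_N\rtimes_{\phi^k}\Z$ for $k$ coprime to the order (\Cref{c:coprime-iso}), which itself requires proof. This is why the paper insists that the bare statement of \Cref{thm:free-by-finite-cyclic} is not enough: it proves the stronger assertion that the isomorphism $\-\G_1\cong\-\G_2$ is realised by slide moves on the graphs of finite cyclic groups (\Cref{iso_posets_sliding_equivalent}), and these moves lift canonically to the GBS decompositions of the $\G_i$ (\Cref{l:lifting_iso}), producing an isomorphism of the central extensions directly and bypassing the cohomological ambiguity entirely.
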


In interpreting this theorem, it is important to note that 
$\G_1\cong\G_2$ does not imply that $N=M$.
%since $F_N< F_{N}\rtimes\Z$ is not a characteristic subgroup in general. 
Moreover, when we restrict to the case $N=M$, 
the existence of an isomorphism 
$F_{N}\rtimes_{\phi_1}\Z\cong F_{N}\rtimes_{\phi_2}\Z$ 
does not imply that 
$\Phi_1 = [\phi_1]$ is conjugate to $\Phi_2 = [\phi_2]$ 
in ${\rm{Out}}(F_N)$, 
nor does it imply that 
$\<\Phi_1\>$ is conjugate to $\<\Phi_2\>$. 
If $N=M$ then we can deduce that 
$\Phi_1$ and $\Phi_2$ have the same order in ${\rm{Out}}(F_N)$ 
but if $N\neq M$ then this too may fail. 
These phenomena are discussed in \cref{s:isomorphism}.

%We shall see that the corresponding result for 1-relator groups with centre follows easily from this theorem.
In \Cref{t:main}, % both settings,
the quotient of each group by its centre 
will be free-by-(finite cyclic), 
but it will not be a split extension in general. 
The fact that these quotients are virtually free 
is not sufficient to guarantee that 
they are uniquely determined by their finite quotients, 
but we shall prove that 
free-by-(finite cyclic) groups are more rigid. 
This is a key step in our proof of \Cref{t:main} 
and it is of independent interest. 

\begin{restatable}{thmA}{thmfinitecyclic}
\label{thm:free-by-finite-cyclic}
	If a pair of finitely generated free-by-(finite cyclic) groups $G_1$ and $G_2$
	have the same finite quotients, then $G_1\cong G_2$. 
\end{restatable}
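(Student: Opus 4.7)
The plan is to introduce, as foreshadowed in the introduction, a finite poset $\tsc(G)$ that records the centralisers of finite subgroups of $G$ together with containment and order data, and then to establish two things: first, that $\tsc(G)$ is a complete isomorphism invariant within finitely generated free-by-(finite cyclic) groups; second, that $\tsc(G)$ is determined by the profinite completion $\widehat{G}$. The conclusion $G_1 \cong G_2$ is then immediate.

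A finitely generated free-by-(finite cyclic) group $G$ is finitely generated and virtually free, so by Karrass--Pietrowski--Solitar it is the fundamental group of a finite graph of finite groups $(\mathcal{G}, Y)$. Since the free normal subgroup is torsion-free, every finite subgroup of $G$ injects into the cyclic quotient $C_n$ and is therefore cyclic of order dividing $n$. The isomorphism type of $G$ is captured by $(\mathcal{G},Y)$, and for the first fact my task is to show that $\tsc(G)$ records enough to reconstruct this graph of groups: conjugacy classes of maximal finite cyclic subgroups supply the vertex groups; their centraliser data and mutual intersections provide the edge incidences; and the rank of the free normal subgroup is extracted from an Euler-characteristic count against the vertex-group orders.

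For the second fact I would lean on the standard toolkit for virtually free groups and their profinite completions: conjugacy separability of virtually free groups, compatibility of the graph-of-groups splitting with profinite Bass--Serre theory in the style of Ribes--Zalesskii, and the fact that every finite subgroup of $\widehat{G}$ is conjugate to the closure of a finite subgroup of $G$. Together these yield a bijection between finite-subgroup conjugacy classes in $G$ and in $\widehat{G}$, and they show that centralisers behave correctly under profinite completion, so the entire poset $\tsc(G)$ can be read off inside $\widehat{G}$ and hence transfers across any isomorphism $\widehat{G_1} \cong \widehat{G_2}$.

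The principal obstacle is calibrating $\tsc$ so that it is simultaneously complete enough to recover $G$ and robust enough to be read off from $\widehat{G}$. Verifying that centraliser containments and indices are preserved under profinite completion — and that no two non-isomorphic graphs of finite cyclic groups can produce the same $\tsc$ — is where the combinatorial and profinite heart of the argument lies. Once $\tsc$ has been designed to capture precisely the data that is both a complete invariant and a profinite invariant, the theorem follows at once.
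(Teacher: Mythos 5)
Your overall architecture is the same as the paper's: define $\tsc(G)$, prove it is a complete invariant for finitely generated free-by-(finite cyclic) groups, and prove it is readable from $\wh{G}$. The profinite half of your plan is essentially the paper's: it uses exactly the toolkit you name (profinite Bass--Serre theory in the style of Ribes--Zalesskii to show every finite subgroup of $\wh{G}$ is conjugate into $G$, conjugacy separability of finite subgroups due to Chagas--Zalesskii, $N_{\wh{G}}(S)=\overline{N_G(S)}$ plus LERF to get $C_{\wh{G}}(S)\cong\wh{C_G(S)}$, and the fact that first Betti number is a profinite invariant). That part of your sketch is sound.

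The gap is in the completeness half. You propose to show that $\tsc(G)$ determines $G$ by \emph{reconstructing the graph of groups}: maximal finite subgroups give the vertex groups, and ``their centraliser data and mutual intersections provide the edge incidences.'' This cannot work, because the edge incidences are not an invariant of $G$. A reduced finite graph of finite cyclic groups can be modified by a \emph{slide move} (moving an endpoint of an edge $e$ across an adjacent edge $f$ whose edge group contains that of $e$) without changing the fundamental group, and such a move genuinely changes which vertices an edge joins. Concretely, the two numbered graphs in the paper's Figure 1 have different incidence structures but isomorphic fundamental groups and identical posets; no amount of centraliser or intersection data can tell you which of the two incidence patterns ``the'' decomposition has, because the group does not remember. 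The conjugacy classes of finite subgroups correspond not to vertices and edges but to connected components of the subgraphs spanned by local groups of order divisible by $k$, which is strictly coarser information. The paper's resolution is to aim lower and work harder: it proves (\Cref{iso_posets_sliding_equivalent}) that two reduced numbered graphs have data-preserving isomorphic posets if and only if they are related by a sequence of slide moves, and since slide moves preserve $\pi_1$, this suffices. That equivalence is the real combinatorial content --- it needs a M\"{o}bius-inversion argument to show the posets determine the number of edges of each label in each component, and a normal-form argument (sliding everything onto an ``octopus'') to realise the poset isomorphism by moves. Your plan as written skips over this and would stall at the point where you try to pin down incidences that are not determined by the group.
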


A crucial idea in the proof of \Cref{thm:free-by-finite-cyclic} 
is that free-by-(finite cyclic) groups $G$ 
are determined up to isomorphism by a natural poset $\tsc(G)$ 
built from the finite subgroups of $G$; 
see \Cref{thm:same_posets_implies_iso}. 

Before discussing our approach to these results, 
we review some of the context in which they sit. 

A group $\G$ from a class $\mathcal{C}$ is said to be 
{\em profinitely rigid among groups in $\mathcal{C}$} 
if any group $\La\in\mathcal{C}$ 
with the same finite quotients as $\G$ 
must be isomorphic to $\G$. 
If the groups in $\mathcal{C}$ are finitely generated, 
this can be rephrased as 
$\wh{\La}\cong\wh{\G} \Longrightarrow \La\cong\G$, 
where $\wh{\G}$ denotes the profinite completion of $\G$, 
i.e. the limit of the inverse system 
formed by all of its finite quotients. 
In recent years there have been a lot of advances 
in the understanding of profinite rigidity 
among groups that arise 
in low-dimensional geometry and topology. 
This provides motivation and context 
for our study of free-by-cyclic groups, 
in light of the fruitful and much-studied 
analogies between automorphisms of free groups
and automorphisms of surface groups 
and free-abelian groups -- 
see \cite{bestvina2003topology, bridson2006automorphism}.

There are several noteworthy works on profinite rigidity for
surface bundles and $n$-torus bundles over the circle.
%Many groups are profinitely rigid in
%the class $\mathcal{C}_3$ of fundamental groups of compact 3-manifolds \cite{BF, BR-fig8, BMRS, BR-prasad, tam}.
For $2$-torus bundles over the circle, it is known that 
profinite rigidity can fail: 
Funar \cite{funar2013torus} pointed out that 
classical results of Stebe \cite{stebe1972conjugacy} 
provide infinitely many pairs of hyperbolic matrices 
$\phi_1,\phi_2\in{\rm{SL}}(2,\Z)$ 
such that the corresponding torus bundles over the circle 
have non-isomorphic fundamental groups 
$\G_1=\Z^2\rtimes_{\phi_1}\Z$ and $\G_2=\Z^2\rtimes_{\phi_2}\Z$
with $\wh{\G}_1\cong \wh{\G}_2$. 
In contrast, 
% to the examples of Stebe-Funar and Hempel, it was proved by 
Bridson, Reid and Wilton \cite{bridson2017profinite} proved that 
the fundamental group of 
every once-punctured torus bundle over the circle 
is profinitely rigid in the class of 3-manifold groups. 
And Liu proved that at most finitely many 
hyperbolic 3-manifold groups 
can share the same profinite completion \cite{liu2023finite}, 
adding weight to the expectation 
that such groups are profinitely rigid, cf. 
\cite{bridson2021profinite}. 

The lack of profinite rigidity 
among groups of the form $\Z^N\rtimes_\phi\Z$ 
was first revealed by Serre 
in a seminal paper of 1964 \cite{serre1964exemples}. 
Baumslag \cite{baumslag1974residually} stripped the phenomenon 
identified by Serre down to its bare bones 
and identified pairs of distinct 
finite-by-cyclic groups that have the same finite quotients. 
Forty years later, 
Hempel \cite{hempel2014some} adapted a key idea from these works 
to construct pairs of 3-manifolds $M_1, M_2$ 
with $\H^2\times\R$ geometry such that 
the groups $\pi_1M_i = \Sigma\rtimes_{\phi_i}\Z$ are distinct 
but have the same profinite completion; 
these manifolds are closed-surface bundles over the circle, 
each with finite holonomy and the same fibre. 
Hempel also gave examples where 
the surface-fibres have non-empty boundary, 
but in this case he did not conclude that 
the fundamental groups (which now have the form $F\rtimes\Z$) 
were not isomorphic, but rather that 
there was no isomorphism between them 
that preserved the peripheral structure of the 3-manifold; 
it was unclear at the time 
whether this added restriction was necessary, 
but \Cref{t:main} shows that it is. 
 
All groups of the form $F_2\rtimes\Z$ arise as 
fundamental groups of punctured-torus bundles over the circle 
and so are covered by \cite{bridson2017profinite}, 
but when the rank $N$ of the free group is at least $3$, 
the groups $F_N\rtimes\Z$ form a more complicated class 
with more subtle geometry. 
In particular, it is unclear what to expect from them 
in terms of profinite rigidity. 
In the case where $F_N\rtimes\Z$ is hyperbolic, 
significant progress towards the analogue of Liu's result 
was made by Hughes and Kudlinska \cite{hughes2023profinite}, 
but profinite rigidity remains open 
even in the hyperbolic setting. 
In the light of Hempel's results, one might expect 
profinite rigidity to fail for groups $F_N\rtimes_\phi\Z$ 
where $\Phi = [\phi] \in{\rm{Out}}(F_N)$ has finite order, 
but \Cref{t:main} shows that this is not the case.

\medskip

Another rich class of groups of geometric dimension 2 
is formed by the {\em torsion-free 1-relator groups}. 
It remains unknown whether 
residually-finite 1-relator groups 
can be distinguished from one another 
by their profinite completions, 
but \Cref{t:1-relator} will allow us to settle this question 
in the case where the group has non-trivial centre. 

\begin{restatable}{thmA}{thmonerelator}
\label{t:1-relator} 
	Let $\G_1$ and $\G_2$ be residually finite groups 
	that admit 1-relator presentations. 
	If $\G_1$ has a non-trivial centre 
	and $\wh{\G}_1\cong\wh{\G}_2$, then $\G_1\cong\G_2$.
\end{restatable}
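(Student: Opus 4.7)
The plan is to reduce Theorem~\ref{t:1-relator} to Theorem~\ref{t:main} via the classical Murasugi--Pietrowski classification of finitely generated $1$-relator groups with non-trivial centre: such a group is either infinite cyclic or a torus knot group $T(p,q)=\langle a,b\mid a^p=b^q\rangle$ with $\gcd(p,q)=1$. Both cases are finitely generated free-by-cyclic with monodromy of finite order in $\out$ --- for $T(p,q)$ the commutator subgroup is free of rank $(p-1)(q-1)$ and the induced $\Z$-action is of finite order, reflecting the Seifert-fibered structure of the torus knot complement. Hence $\Gamma_1$ lies within the scope of \Cref{t:main}.

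Since $\widehat{\Gamma}_1\cong\widehat{\Gamma}_2$ is topologically finitely generated and $\Gamma_2$ is residually finite, $\Gamma_2$ is itself finitely generated. Combining \Cref{t:main} with the Murasugi--Pietrowski classification applied to $\Gamma_2$, the conclusion $\Gamma_1\cong\Gamma_2$ will follow as soon as one knows $Z(\Gamma_2)\neq 1$: a non-trivial centre forces $\Gamma_2$ to be free-by-cyclic with finite-order monodromy, placing it within the scope of \Cref{t:main}. An infinite-order central element of $\Gamma_1$ remains central and of infinite order in $\widehat{\Gamma}_1\cong\widehat{\Gamma}_2$, so at the very least the profinite centre $Z(\widehat{\Gamma}_2)$ is non-trivial.

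\textbf{The main obstacle} is to promote this profinite central element to an algebraic central element of $\Gamma_2$. The plan is to exploit the fact that $\Gamma_1$ admits a finite-index subgroup $K_1\cong F_r\times\Z$, obtained by passing to the index-$n$ subgroup where $n$ is the order of $[\phi_1]\in\out(F_N)$ and using that $\phi_1^n=\ad_w$ is inner, so that $w^{-1}t^n$ generates a central $\Z$-factor. Correspondingly $\widehat{\Gamma}_2$ contains a finite-index open subgroup isomorphic to $\widehat{F_r}\times\widehat{\Z}$, which pulls back to a finite-index subgroup $K_2\leq\Gamma_2$ with $\widehat{K_2}\cong\widehat{F_r\times\Z}$. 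The hard step is to establish $K_2\cong F_r\times\Z$: since $F_r\times\Z$ is free-by-$\Z$ rather than free-by-(finite cyclic), \Cref{thm:free-by-finite-cyclic} does not apply directly, and one must combine ideas from its proof with Magnus--Karrass--Solitar analysis of finite-index subgroups of $1$-relator groups together with the vanishing $H^2(F_r,\Z)=0$ which classifies central extensions of $F_r$ by $\Z$. Once $K_2\cong F_r\times\Z$ is in hand, its central $\Z$-factor is characteristic in $\widehat{K_2}$ and hence normal in $\Gamma_2$; since $\aut(\Z)=\{\pm 1\}$, a finite-index subgroup of $\Gamma_2$ centralises this $\Z$-factor, producing a non-trivial element of $Z(\Gamma_2)$ and closing the argument.
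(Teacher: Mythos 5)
Your overall strategy coincides with the paper's: invoke the structure theory of one-relator groups with centre to place $\G_1$ in the scope of \Cref{t:main}, and then reduce everything to showing $Z(\G_2)\neq 1$. Two corrections on the first part. The Murasugi--Pietrowski classification does not say that a one-relator group with non-trivial centre is cyclic or a torus knot group; the class is larger (it includes, e.g., $\langle a,t\mid ta^nt^{-1}=a^n\rangle$, which has centre $\langle a^n\rangle$ and first Betti number $2$, so is no torus knot group). What you actually need -- and what the paper uses -- is the Baumslag--Taylor theorem that every one-relator group with non-trivial centre is $F_N\rtimes_\phi\Z$; combined with \Cref{free-by-cyclic-with-centre}, the non-trivial centre forces $[\phi]$ to have finite order, so $\G_1$ (and, once its centre is known to be non-trivial, $\G_2$) lies in the scope of \Cref{t:main}. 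This part of your argument is repairable by citation.

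The genuine gap is the step you yourself flag as ``the main obstacle'': you never prove $Z(\G_2)\neq 1$. Your plan -- pull back a finite-index open subgroup of $\wh\G_2$ isomorphic to $\wh{F_r\times\Z}$ to some $K_2\le\G_2$ and then show $K_2\cong F_r\times\Z$ -- leaves the decisive step unproved, and it cannot be closed by the tools you name: $H^2(F_r,\Z)=0$ classifies central extensions only \emph{after} you know $K_2$ is a central extension of a free group by $\Z$, i.e.\ after you know $K_2$ has infinite centre, which is exactly what is at stake; the argument is circular. The paper's resolution is \Cref{l:l2centre}: since $\wh\G_2\cong\wh\G_1$, one has $\beta_1^{(2)}(\G_2)=\beta_1^{(2)}(\G_1)=0$ (as $\G_1$ contains a finitely generated infinite normal subgroup of infinite index), whereas if $Z(\G_2)$ were finite it would be trivial (because $Z(\wh\G_1)\cong\wh\Z$ is torsion-free, by \Cref{centers_computed}) and $\G_2$ would embed densely in $\wh{\G_1}/Z(\wh{\G_1})\cong\wh{\G_1/Z(\G_1)}$, which contains $\wh F_N$ with finite index; any dense finitely presented subgroup of such a completion has positive first $\ell_2$-Betti number, a contradiction. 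Separately, your closing inference is also a non-sequitur: an element centralised by a finite-index (here index-$\le 2$) subgroup of $\G_2$ need not lie in $Z(\G_2)$ -- it could be inverted by the remaining coset -- so even granting the ``hard step'' your argument only produces a non-trivial centre for a subgroup of index at most $2$, not for $\G_2$ itself.
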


We shall see in \cref{s:1-relator} 
that this theorem would fail 
if we did not assume that $\G_2$ was residually finite. 
The passage from \Cref{t:main} to \Cref{t:1-relator} 
is made straightforward 
by a result of Baumslag and Taylor \cite{baumslag1967centre}, 
who proved that all 1-relator groups with centre 
are free-by-cyclic $F_N\rtimes\Z$. 
A further much-studied class 
in which the members with centre are free-by-cyclic 
is formed by the Generalised Baumslag-Solitar (GBS) groups, 
i.e. the fundamental groups of 
finite graphs of infinite cyclic groups \cite{levitt2015generalized}, an the analogue of  \cref{t:1-relator} holds in this
setting (Remark \ref{r:GBS}). 

GBS groups play a more important role in this article 
than the tone of this last remark might suggest: 
our proof of \Cref{t:main} begins with a change of perspective 
that replaces the visible semidirect decomposition 
$\G=F\rtimes\Z$ 
with a decomposition of $\G$ 
as a graph of infinite cyclic groups. 
% and the quotient of $\G$ by its centre inherits the structure of a graph of finite cyclic groups.
We will argue that the groups $\G_1$ and $\G_2$ in \Cref{t:main} 
have the same profinite completion if and only if 
the groups modulo their centres
have the same profinite completion. 
The groups modulo their centres are free-by-(finite cyclic), 
so \Cref{thm:free-by-finite-cyclic} will tell us that 
$\G_1/Z(\G_1)$ is isomorphic to $\G_2/Z(\G_2)$ 
if these groups have the same finite quotients. 
But we need more from \Cref{thm:free-by-finite-cyclic} 
than just its statement: 
our proof of \Cref{thm:free-by-finite-cyclic} involves 
an analysis of graph-of-finite-groups decompositions 
of free-by-(finite cyclic) groups, 
and what we prove is that 
if the groups in \Cref{thm:free-by-finite-cyclic} 
have the same profinite completion, 
then one can transform the graph of groups 
representing the first into that representing the second 
by a sequence of {\em slide moves}. 
In the case $G_i=\G_i/Z(\G_i)$, 
these slide moves can be lifted 
to the graph-of-groups decompositions 
associated to the description of the $\G_i$ as GBS groups, 
proving that $\G_1\cong\G_2$.

There are several features of this outline of proof 
upon which we should expand. 
First, although \Cref{t:main} was our original motivation, 
\Cref{thm:free-by-finite-cyclic} emerges as a key result. 
In the case where 
the finite cyclic group is of prime-power order, 
\Cref{thm:free-by-finite-cyclic} 
was proved by Grunewald and Zalesskii -- 
see Theorem 3.12 of \cite{grunewald2011genus};
their proof relies on 
the subgroup structure of cyclic $p$-groups 
and uses a decomposition of $F_n$-by-$(\Z/p^\alpha)$ groups 
that is not available in the general case. 
It is important to note here that although one expects 
a proof of \Cref{thm:free-by-finite-cyclic} 
to use arguments about graphs of finite groups, 
it must avoid the fact that 
virtually free groups are not profinitely rigid in general, 
and hence one needs to use 
graphs with special features or enhancements. 

In our outline of proof, 
we spoke about the graph-of-groups decompositions 
of the groups $G_i$ 
in a manner that might suggest they are reasonably canonical. 
In fact, the uniqueness issue is somewhat delicate, 
as readers familiar with 
deformation spaces of group actions on trees will anticipate. 
Correspondingly, the main object of study 
in our proof of \Cref{thm:free-by-finite-cyclic} is 
%not a graph-of-groups decomposition but instead
the entirely canonical \emph{finite subgroups conjugacy poset}, 
denoted by $\tsc(G)$. 
The elements of this poset 
are the conjugacy classes of finite subgroups in $G$
and these classes are labelled 
by the first Betti number of the subgroup's centraliser 
as well as the size of the subgroup. 
As one might expect with such a natural object, 
versions of this poset have been studied elsewhere, 
for example in \cite{corson2023higman} 
where it is used to prove that graph products of finite groups 
are profinitely distinguished from each other 
(although in that setting it was not necessary 
to retain information about centralisers). 

\begin{restatable}{thmA}{thmposetinvariant}
\label{thm:same_posets_implies_iso}
	Let $G_1$ and $G_2$ be finitely generated 
	free-by-(finite cyclic) groups. 
	Then $G_1\cong G_2$ (equivalently, $\wh{G}_1\cong \wh{G}_2$) 
	if and only if there is a data-preserving isomorphism 
	of posets $\tsc(G_1)\cong\tsc(G_2)$. 
	
	Furthermore, the isomorphism $\tsc(G_1)\cong\tsc(G_2)$ 
	can be realised by a sequence of slide moves 
	transforming any realisation of $\tsc(G_1)$ 
	as a reduced graph of finite cyclic groups 
	into a realisation of $\tsc(G_2)$. 
\end{restatable}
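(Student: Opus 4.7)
The forward implication is immediate: any isomorphism $G_1\cong G_2$ carries conjugacy classes of finite subgroups to conjugacy classes of finite subgroups, preserves inclusion, and preserves both the order of a subgroup and the first Betti number of its centraliser, since these are intrinsic invariants. The parenthetical equivalence with $\wh{G}_1\cong\wh{G}_2$ is already covered by \Cref{thm:free-by-finite-cyclic}. The substantive task is therefore to produce an isomorphism $G_1\cong G_2$ from a data-preserving poset isomorphism $\tsc(G_1)\cong\tsc(G_2)$, and to do so in a way that manifestly corresponds to slide moves between reduced graph-of-groups decompositions.

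The plan is to work throughout with reduced graph-of-finite-cyclic-groups decompositions. By Karrass--Pietrowski--Solitar, any finitely generated free-by-(finite cyclic) group $G$ acts on a tree with finite cyclic stabilisers and finite quotient; the resulting reduced graph of groups $\mathcal{G}=(X,\{G_v\},\{G_e\})$ has finite cyclic vertex and edge groups, and every finite subgroup of $G$ is conjugate into some $G_v$. I would first read off how $\tsc(G)$ is encoded in $\mathcal{G}$: conjugacy classes of finite subgroups correspond to $\mathcal{G}$-equivalence classes of subgroups of vertex groups (where subgroups of different $G_v$ are identified via chains of edge-group embeddings), the poset relation corresponds to ordinary subgroup inclusion inside a vertex group, and for each finite subgroup $H$, Bass--Serre theory identifies $C_G(H)$ with the fundamental group of the sub-graph of groups spanned by vertices and edges whose stabiliser contains a conjugate of $H$; hence $b_1(C_G(H))$ is determined combinatorially from $\mathcal{G}$.

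Next I would invert this procedure. Given the poset isomorphism and reduced decompositions $\mathcal{G}_1$ of $G_1$ and $\mathcal{G}_2$ of $G_2$, I would match the maximal elements of the posets with the vertex group conjugacy classes and use the $b_1$-data, together with the poset relations, to match edge groups and their incidence pattern. The poset isomorphism then tells us how to rearrange $\mathcal{G}_1$ so that its underlying graph, vertex groups, and edge inclusions coincide with those of $\mathcal{G}_2$; this yields the desired $G_1\cong G_2$ and, along the way, a path between the two graphs of groups.

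The principal obstacle is to show that the ambiguity in reconstructing $\mathcal{G}$ from $\tsc(G)$ is exactly the ambiguity permitted by slide moves in the sense of Forester. My approach is an induction on the complexity of the graph of groups: a slide move reroutes an edge $e$ across an adjacent edge $e'$ whose edge group contains $G_e$, and such moves alter neither the conjugacy classes of finite subgroups nor the centraliser Betti numbers. Conversely, given two reduced decompositions with identical $\tsc$-data, I would fix a maximal vertex group conjugacy class and reduce the problem to a matching problem between incident edges with prescribed edge groups; the centraliser Betti numbers, which count precisely how many ways an edge group participates in loops through the sub-graph fixing it, force the matching to agree up to rerouting by slide moves. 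This simultaneously establishes $G_1\cong G_2$ and its realisation by a sequence of slide moves.
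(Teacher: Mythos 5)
Your overall architecture is the same as the paper's: translate $\tsc(G_i)$ into combinatorial data attached to a reduced graph-of-finite-cyclic-groups decomposition (this is \Cref{associated_poset_is_torsion_subgroups}), and then show that two reduced graphs carrying data-isomorphic posets are related by slide moves. The forward direction and the dictionary between conjugacy classes of finite subgroups, components of the subgraphs $\mathcal{G}_k$, and centralisers are correctly described. However, the step you yourself identify as ``the principal obstacle'' is precisely where the theorem's content lies, and your proposal asserts rather than proves it. Two concrete ingredients are missing. First, the poset does \emph{not} record the labels on the edges of the graph: an element $a_{i,k}$ only tells you the connected component of the divisible-by-$k$ subgraph together with its first Betti number, so before any matching of edges can begin one must recover, for each component and each $l$, the number of edges labelled exactly $l$. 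The paper does this by first extracting vertex and edge counts from the maximal elements and the tags, and then running a M\"{o}bius inversion over the divisibility lattice ($e_l(\mathcal{H}) = \sum_{l\mid m} e(\mathcal{H}_m)\mu(m/l)$); nothing in your sketch supplies this. Second, even once the edge-label multiplicities agree, the claim that the Betti numbers ``force the matching to agree up to rerouting by slide moves'' needs an actual argument: the paper proceeds by a top-down induction on the levels of the poset, first sliding all label-$k$ edges so that their endpoints lie at chosen basepoints of the higher-label components, and then reducing the resulting all-label-$k$ graph to an explicit normal form (the octopus graph), which is determined by its vertex and edge counts. Without a normal form or an equivalent inductive scheme, the slide-equivalence is not established.

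A smaller but genuine logical slip: you dispose of the parenthetical equivalence $G_1\cong G_2 \Leftrightarrow \wh{G}_1\cong\wh{G}_2$ by citing \Cref{thm:free-by-finite-cyclic}, but in the paper that theorem is \emph{deduced from} \Cref{thm:same_posets_implies_iso} (profinite equivalence gives a data-preserving isomorphism of the posets via \Cref{torsion} and \Cref{centralisers}, and then the present theorem is invoked). As written, your appeal is circular; the correct order is to prove the substantive direction of the present theorem first and only then derive the profinite statement.
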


The fact that the isomorphism $\tsc(G_1)\cong\tsc(G_2)$ 
can be exhibited through \emph{slide moves} 
is crucial for the passage 
from \Cref{thm:free-by-finite-cyclic} to \Cref{t:main}. 

\bigskip

The remainder of this paper is organised as follows. 
Following some preliminaries, 
in \Cref{sec:GBS} we study 
the GBS decomposition of free-by-cyclic groups with centre, 
explaining how the centre is encoded in this decomposition. 
In \Cref{sec:reduction} we use this structure 
to reduce \Cref{t:main} 
to the study of 
the profinite completions of free-by-(finite cyclic) groups, 
whose decompositions we study in \Cref{s:free-by-fc}. 
In \Cref{s:poset-slides} we introduce a key technical device---%
the poset associated to a numbered graph---%
and prove a result that will enable us 
to promote isomorphisms
to sequences of sliding moves in various contexts. 
In \Cref{sec:torsion} we translate these results into
more algebraic terms, introducing $\tsc(G)$ 
and proving \Cref{thm:same_posets_implies_iso}. 
Following an interlude in \Cref{sec:completions_detect} 
gathering some conclusions from  profinite Bass-Serre theory, 
we are able to complete the proof of our main theorems 
in \Cref{sec:proof-of-main-thms}. 
\Cref{t:1-relator} is proved in \Cref{s:1-relator} 
along with a companion result 
that illustrates the need to restrict attention 
to residually finite groups. 
In the final section we discuss 
some non-obvious isomorphisms $F_N\rtimes\Z\cong F_M\rtimes\Z$. 
\smallskip

\noindent{\bf Acknowledgements.} 
The second author was supported by 
the Mathematical Institute Scholarship 
from the University of Oxford.

\section{Preliminaries}

\subsection{Profinite completions} 

The reader will require only 
a rudimentary knowledge of profinite groups; 
when we require non-elementary facts we will provide references. 
The {\em profinite completion} $\wh{\G}$ 
of an abstract group $\G$ 
is the limit of the inverse system of finite quotients of $\G$; 
it is a compact topological group. 
The natural map $\G\to\wh{\G}$ is injective 
if and only if $\G$ is residually finite, 
and we identify $\G$ with its image. 
Every homomorphism of abstract groups $\phi:\G_1\to\G_2$ 
extends uniquely to a continuous homomorphism 
$\wh{\phi}:\wh{\G}_1\to\wh{\G}_2$. 

If $\G$ is finitely generated and $Q$ is finite, 
then every epimorphism $\wh{\G}\to Q$ is continuous 
by \cite{nikolov2003finite} 
and restriction to the dense subgroup $\G$ induces a bijection 
${\rm{Epi}}(\wh{\G},Q)\to {\rm{Epi}}({\G},Q)$. 
For each natural number $d$, 
there is a bijection between the subgroups of index $d$ in $\G$ 
and those of index $d$ in $\wh{\G}$; 
this assigns to each $\Lambda<\G$ 
its closure $\overline{\Lambda}<\wh{\G}$, 
and $\overline{\Lambda}\cong\wh{\Lambda}$. 
It follows that 
if $\G_1$ and $\G_2$ are finitely generated groups 
with $\wh{\G}_1=\wh{\G}_2$, 
then there is a 1-1 correspondence 
$\Lambda\leftrightarrow \overline{\Lambda}\cap \G_2$ 
between the subgroups of finite index in $\G_1$ 
and those in $\G_2$, 
such that corresponding subgroups have the same index 
and the same profinite completion.

\subsection{Graphs of groups}

We shall assume that the reader is familiar 
with the ideas and terminology of {\em Bass-Serre theory} 
\cite{serre1980trees}, 
and with the graph-of-spaces approach to the subject 
developed by Scott and Wall \cite{scott1979topological}. 
We will typically denote a graph of groups by $\GG (V, E)$, 
or simply $\GG$. 
The underlying  graph $(V, E)$ will always be connected. 
The standard involution on $E$ will be denoted 
$e\mapsto \overline{e}$ 
and the endpoint maps are $o$ and $\tau$; 
so $\tau(\overline{e})=o(e)$. 
We write $G_e=G_{\-e}$ and $G_v$ 
for the local groups at $e\in E$ and $v\in V$, 
and $\iota_e : G_e\to G_{\tau(e)}$ 
for the inclusion that is part of the data. 
We will often assume that $\GG$ is {\em reduced}, 
meaning that if $e$ is an edge with distinct vertices 
then $\iota_e$ is not surjective.

\subsection{Culler realisation}

Every element of finite order in ${\rm{Out}}(F_N)$ 
admits a geometric realisation \cite{culler1984finite}. 
More precisely, if $\phi\in{\rm{Aut}}(F_N)$ is such that  
$[\phi]$ has finite order $m$ in ${\rm{Out}}(F_N)$,
then there is a reduced graph $X$ 
with an isomorphism $\iota:\pi_1X\to F_N$ 
(involving an implicit choice of basepoint $x_0\in X$) 
and an isometry $f : X\to X$ of order $m$ 
such that $[\iota f_* \iota^{-1}] = [\phi]$, 
where $f_*\in {\rm{Aut}}(\pi_1X)$ is any representative 
of the outer automorphism determined by $f$. 
These geometric realisations 
will play a crucial role in this article. 
They are not unique in general 
-- a point that is central to the discussion in \Cref{s:last}.

\subsection{$\ell_2$-Betti numbers}

On occasion, we shall need some basic facts 
about the first $\ell_2$-Betti numbers of groups. 
In each case, the groups at hand will be 
finitely presented and residually finite, 
which means that the reader unfamiliar with these invariants 
can treat L\"{u}ck's Approximation Theorem 
\cite{luck1994approximating} as a definition: 
if $\G$ is finitely presented and residually finite
then $\beta_1^{(2)}(\G)$ is the limit 
of the numbers $\beta_1(S_n)/[\G\colon S_n]$, 
where $(S_n)$ is any nested sequence 
of finite-index subgroups in $\G$ with $\bigcap_n S_n = 1$ 
and $\beta_1(S_n)$ is the usual first Betti number, 
i.e. $\dim_\Q H_1(S_n,\Q)$. 
If $\G_1$ and $\G_2$ are finitely presented and there is 
an injection $\G_2\hookrightarrow\wh{\G}_1$ with dense image, 
then $\beta_1^{(2)}(\G_2)\ge \beta_1^{(2)}(\G_1)$ 
-- see Proposition 3.2 in \cite{bridson2013determining}. 
It follows that $\beta_1^{(2)}$ is a profinite invariant 
of finitely presented, residually finite groups.

\section{GBS Structures and Centres}
\label{sec:GBS}

In this section, we will be concerned mostly 
with finite graphs of infinite cyclic groups $\GG$, 
each equipped with a choice of generator 
for each of the local groups. 
There is a 1-1 correspondence between these structures 
and {\em integer-labelled graphs} -- i.e. 
finite graphs with an integer label at each end of each edge. 
The correspondence is given by using the indices 
$\d_e=[G_{\tau(e)} : \iota_e(G_e)]$ as labels, 
altered by a sign $\e_e=\pm 1$ 
that records whether the preferred generator of $G_e$ is sent to 
a positive or negative power 
of the preferred generator of $G_{\tau(e)}$. 
By definition, the fundamental group $\Gamma=\pi_1\GG$ 
of a finite graph of infinite cyclic groups 
is a {\em Generalised Baumslag Solitar (GBS) group}. 
If the decomposition of $\Gamma$ 
corresponds to an integer-labelled graph 
where all of the labels are positive, 
then $\Gamma$ is said to be a {\em positive} GBS group. 

An important invariant of $\Gamma$ is 
the {\em modular homomorphism} 
$\Delta_\Gamma : {\Gamma\to \mathbb{Q}^\times}$, 
which maps the local groups trivially 
and assigns to each circuit $\gamma= e_1e_2\ldots e_k$ of edges 
in the underlying graph of $\GG$ 
the rational number 
\begin{equation}
	\label{eq:fraction}
	\e_\gamma
	\frac{[G_{o(e_1)}: \iota_{\-{e}_1}(G_{e_1})] \cdot\ldots\cdot [G_{o(e_k)}: \iota_{\-{e}_k}(G_{e_k})]}%
	{[G_{\tau(e_1)}: \iota_{e_1} (G_{e_1})] \cdot\ldots\cdot [G_{\tau(e_k)}: \iota_{e_k}(G_{e_k})]}
\end{equation}
where $\e_\gamma$ is the product of the signs $\e_{e_i}$. 

The following proposition is not new; it is contained in 
Proposition 4.1 of \cite{levitt2015generalized} for example. 

\begin{proposition}
	\label{prop:graph_of_Zs}
	Let $\Gamma = F_N \rtimes_{\phi} \Z$ be 
	a free-by-cyclic group 
	such that $[\phi]$ has finite order in $\out(F_n)$. 
	Then, $\Gamma$ is a positive GBS group 
	whose modular homomorphism 
	$\Delta_\G:\G\to \mathbb{Q}^\times$ is trivial. 
\end{proposition}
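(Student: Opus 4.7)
The plan is to derive the GBS decomposition geometrically from a Culler realisation of $[\phi]$, after which both positivity and the triviality of the modular homomorphism will be read off from the combinatorics of the group action. First, apply Culler's theorem to obtain a reduced finite graph $X$ with an isomorphism $\pi_1 X\cong F_N$ and a simplicial isometry $f\colon X\to X$ of finite order $m$ that realises $[\phi]$. The mapping torus $M_f$ of $f$ then satisfies $\pi_1 M_f\cong F_N\rtimes_\phi\Z=\Gamma$, and admits a model $M_f=(X\times\R)/\Z$ in which $1\in\Z$ acts by $(x,t)\mapsto(f(x),t+1)$. For each vertex $v$ of $X$ write $m_v$ for the order of its $\langle f\rangle$-stabiliser, and likewise $m_e$ for an edge $e$; both divide $m$.

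Next, I would give $M_f$ a graph-of-spaces decomposition whose underlying graph is the quotient $X/\langle f\rangle$. For an orbit $[v]$ of vertices, the associated vertex space is the image in $M_f$ of the $\Z$-invariant subspace $\bigsqcup_{w\in[v]}\{w\}\times\R$; a direct check on the $\Z$-action shows that this image is a single circle of length $m/m_v$ (the orbit has $m/m_v$ copies of $\R$, which the $\Z$-action permutes cyclically and translates by $+1$ at each step, so the quotient closes up after $m/m_v$ units). The analogous construction for edges produces edge spaces that are circles of length $m/m_e$. Every vertex and edge group is therefore infinite cyclic, realising $\Gamma$ as a GBS group. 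The attaching map of an edge circle onto the vertex circle at its endpoint $\tau(e)$ is a covering map of degree $(m/m_e)/(m/m_{\tau(e)})=m_{\tau(e)}/m_e$, and it is orientation-preserving because the $\R$-factor in $X\times\R$ supplies a consistent orientation on all of these circles. Thus every edge sign $\e_e$ equals $+1$, so $\Gamma$ is positive, and $[G_{\tau(e)}:\iota_e(G_e)]=m_{\tau(e)}/m_e$; symmetrically, $[G_{o(e)}:\iota_{\-e}(G_e)]=m_{o(e)}/m_e$.

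Substituting these indices into formula \eqref{eq:fraction}, for any circuit $\gamma=e_1\cdots e_k$ (so $\tau(e_i)=o(e_{i+1})$ cyclically) one obtains
$$\Delta_\Gamma(\gamma)=\prod_{i=1}^{k}\frac{m_{o(e_i)}/m_{e_i}}{m_{\tau(e_i)}/m_{e_i}}=\prod_{i=1}^{k}\frac{m_{o(e_i)}}{m_{\tau(e_i)}}=1,$$
the last equality by cyclic telescoping, so $\Delta_\Gamma$ is trivial. The only delicate bookkeeping is verifying that each orbit of $\langle f\rangle$ on vertices (respectively edges) contributes a single circle of the stated length; once that is in place, positivity and the vanishing of $\Delta_\Gamma$ fall out together from the fact that the numerical data $(m_v,m_e)$ come from orbit-stabiliser arithmetic.
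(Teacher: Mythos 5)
Your argument is essentially the paper's own proof: both pass through a Culler realisation $f\colon X\to X$, view the mapping torus as a graph of spaces with circle vertex and edge spaces over $X/\langle f\rangle$, read off positivity from the consistent orientation supplied by the suspension direction, and kill $\Delta_\Gamma$ by the same telescoping of orbit sizes (your stabiliser orders $m_v, m_e$ are just the reciprocal bookkeeping for the paper's cell counts $m/m_v$, $m/m_e$). The only point you should add is the paper's preliminary subdivision of $X$ so that $f$ inverts no edge; without it the quotient need not be a graph of circles in the form you describe, since an edge whose stabiliser reverses it contributes a M\"obius-band edge space and a folded edge in $X/\langle f\rangle$.
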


\begin{proof} 
	If $[\phi]$ has finite order $m$, 
	then it can be realised as 
	an isometry of a reduced graph with fundamental group $F_n$; 
	see \cite{culler1984finite}. 
	More precisely, there is a reduced graph $X$ 
	and an isomorphism $\iota:\pi_1X\to F_n$ 
	(involving an implicit choice of basepoint $x_0\in X$) 
	and an isometry $f : X\to X$ of order $m$ 
	such that $[\iota f_* \iota^{-1}] = [\phi]$, 
	where $f_*\in {\rm{Aut}}(\pi_1X)$ is 
	any representative of the outer automorphism determined by $f$. 
	Subdividing if necessary, 
	we may assume that $f$ does not invert any edges. 
	The mapping torus of $f$ is, by definition, 
	\begin{equation*}
		M_f = X \times [0,1] \ / \ (x,0) \sim (f(x), 1). 
	\end{equation*}
	The identifications made above induce an isomorphism 
	$\pi_1 M_f \cong F_N\rtimes_\phi\Z$. 
	Observe that $M_f$ is a graph of spaces 
	where all edge and vertex spaces are circles. 
	%: the vertex spaces correspond to the $\<f\>$-orbits of vertices in $X$ and the 
	%edge spaces correspond to the $\<f\>$-orbits of edges in $X$. 
	Applying the functor $\pi_1$, 
	we obtain a decomposition of $\pi_1M_f=F_N \rtimes_{\phi} \Z$ 
	as a graph of infinite cyclic groups $\GG(V,E)$, 
	where $V$ is the set of $\<f\>$-orbits in the 0-skeleton of $X$ 
	and $E$ is the set of orbits of the 1-cells. 
	
	We choose generators of the local groups in this decomposition 
	so that the corresponding loops in $M_f$ 
	trace along $X\times (0,1)$ in the positive direction. 
	With this choice, it is clear that for each edge $e\in E$
	the inclusion $G_e\to G_{\tau(e)}$ sends 
	the preferred generator $t_e\in G_e$ to $t_{\tau(e)}^{\d_e}$, 
	where $t_{\tau(e)}$ is the preferred generator of $G_{\tau(e)}$
	and the integer $\d_e$ is the number of 1-cells of $X$ 
	that lie in the $\<f\>$-orbit $e$, 
	divided by the number of 0-cells in the $\<f\>$-orbit $\tau(e)$. 
	Note that since $\d_e>0$, 
	the labels in the corresponding integer-labelled graph 
	are all positive.
	
	If $\gamma= e_1e_2\ldots e_k$ is a circuit of edges in $(V,E)$, 
	then 
	\begin{equation} 
		\Delta_\Gamma(\gamma)=
		\frac{
			[G_{o(e_1)}: \iota_{\-{e}_1}(G_{e_1})] 
			\cdot\ldots\cdot [G_{o(e_k)}: \iota_{\-{e}_k}(G_{e_k})]
		}{
			[G_{\tau(e_1)}: \iota_{e_1} (G_{e_1})] 
			\cdot\ldots\cdot [G_{\tau(e_k)}: \iota_{e_k}(G_{e_k})]
		}
		= \frac{
			\d_{\-e_1}\dots \d_{\-e_k}
		}{
			\d_{e_1}\dots \d_{e_k}
		}.
	\end{equation} 
	The key point to observe now is that, for $i=1,\dots,k$, the fraction
	\begin{equation*}
		\frac{
			[G_{o(e_i)}: \iota_{\-{e}_i}(G_{e_i})]
		}{
			[G_{\tau(e_i)}: \iota_{e_i}(G_{e_i})]
		}
	\end{equation*} 
	is the number of $0$-cells of $X$ in the $\<f\>$-orbit $o(e_i)$ 
	divided by the number of 0-cells in the $\<f\>$-orbit $\tau(e_i)$, 
	and the latter is the same as $o(e_{i+1})$, with indices mod $k$. 
	Thus $\Delta_\Gamma(\gamma)=1$.
\end{proof}

The following corollary fails for automorphisms of surface groups, 
and this is a key point in Hempel's examples \cite{hempel2014some}. 

\begin{corollary}
\label{c:coprime-iso}
	If $[\phi]$ has order $m$ in $\outN$ and $k$ is coprime to $m$, 
	then $F_N\rtimes_\phi\Z$ and $F_N\rtimes_{\phi^k}\Z$
	are isomorphic.
\end{corollary}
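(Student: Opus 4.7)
The plan is to realise both $F_N\rtimes_\phi\Z$ and $F_N\rtimes_{\phi^k}\Z$ as the fundamental group of the same graph of infinite cyclic groups, by running the construction in the proof of \Cref{prop:graph_of_Zs} with the same Culler realisation. Pick an order-$m$ isometry $f\colon X\to X$ of a reduced finite graph together with an identification $\iota\colon\pi_1X\to F_N$ such that $[\iota f_*\iota^{-1}]=[\phi]$, and, after subdividing if necessary, assume that $f$ inverts no edge. Then $f^k$ is an isometry of $X$ realising $[\phi^k]$, since $[\iota (f^k)_*\iota^{-1}]=[\iota f_*\iota^{-1}]^k=[\phi^k]$.

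The crucial observation is that $\gcd(k,m)=1$ forces
\begin{equation*}
\<f^k\>=\<f\>\quad\text{as subgroups of }\isom(X).
\end{equation*}
In particular $f^k$ also has order $m$, and $f^k$ inverts no edge of $X$: any inversion $f^k(e)=\bar e$ would place $\bar e$ in the common orbit $\<f^k\>\cdot e=\<f\>\cdot e$, contradicting the choice of $f$. The integer-labelled graph produced in \Cref{prop:graph_of_Zs} has vertex set the $\<f\>$-orbits of $0$-cells of $X$, edge set the $\<f\>$-orbits of $1$-cells, and labels $\d_e$ built from ratios of orbit sizes; all of this data is canonically attached to the cyclic subgroup $\<f\>\subset\isom(X)$, not to the chosen generator. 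Running the construction once with $f$ and once with $f^k$ therefore yields one and the same reduced graph of infinite cyclic groups $\GG$, and hence
\begin{equation*}
F_N\rtimes_\phi\Z\cong\pi_1(\GG)\cong F_N\rtimes_{\phi^k}\Z.
\end{equation*}

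No substantial obstacle arises; the only care point is verifying that the non-inversion condition persists under passage from $f$ to $f^k$, which is immediate from $\<f\>=\<f^k\>$. Conceptually, the corollary expresses the fact that the GBS decomposition extracted from a Culler realisation depends only on the cyclic group acting on the graph and not on a preferred generator, and this is precisely the flexibility that is absent in the surface-bundle analogue behind Hempel's examples alluded to just before the statement.
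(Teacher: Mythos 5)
Your proof is correct and takes essentially the same route as the paper: both arguments observe that $\<f\>=\<f^k\>$ when $\gcd(k,m)=1$, so the integer-labelled graph produced in the proof of \Cref{prop:graph_of_Zs} depends only on the orbits of this cyclic group and is therefore the same for $\phi$ and $\phi^k$. Your additional check that the non-inversion condition persists for $f^k$ is a sensible (and correct) point of care that the paper leaves implicit.
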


\begin{proof} 
	In the proof of Proposition \ref{prop:graph_of_Zs},
	the graph $(V,E)$ and the integer labels $\delta_e$ depend only on the $\<f\>$-orbits in $X$ and not on the choice
	of generator for $\<f\>= \<f^k\>$, so $F_N\rtimes_\phi\Z$ and $F_N\rtimes_{\phi^k}\Z$ have the same GBS structure.
\end{proof}

\begin{example}
\label{e:klein}
	In the above construction, the Klein bottle group $\Z\rtimes\Z$ 
	with a non-positive GBS presentation 
	$\< x, y \mid x^{-1}yx = y^{-1} \>$ 
	emerges as the GBS associated to the labelled graph with
	two vertices and one edge, 
	with both ends of the edge labelled $2$ 
	with a positive GBS presentation $\< a, b \mid a^2 = b^2 \>$.
\end{example}

\subsection{Centres}

It is well known that 
the free-by-cyclic groups with non-trivial centre 
are precisely those considered in the previous proposition; 
we include a proof for the reader's convenience. 

\begin{lemma}
\label{free-by-cyclic-with-centre} 
	Let $\G = F_N \rtimes_\phi \Z$ be a free-by-cyclic group, 
	with $N\ge 2$, and write elements in the form $g=xt^k$ 
	with $x\in F_N$ and $\<t\> = 1\rtimes\Z$.
	
	If $[\phi]$ has infinite order in $\out(F_N)$ 
	then the centre of $\G$ is trivial.
	If $[\phi]$ has order $m$ in $\out(F_N)$ 
	then the centre of $\G$ is cyclic, 
	generated by an element of the form $xt^m$, 
	where $\ad_{x^{-1}} = \phi^m$. 
\end{lemma}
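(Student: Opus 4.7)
The plan is to write every element of $\G$ uniquely as $g=xt^k$ with $x\in F_N$ and $k\in\Z$, and translate centrality into two algebraic conditions on $x$ and $k$. Using the defining relation $tyt^{-1}=\phi(y)$, one computes that $g$ commutes with every element of $F_N$ precisely when $\ad_x\circ\phi^k=\id_{F_N}$, i.e.\ when $\phi^k=\ad_{x^{-1}}$ in $\aut(F_N)$; in particular $[\phi^k]=1$ in $\out(F_N)$. Similarly, $gt=tg$ is equivalent to $\phi(x)=x$. Since $\G$ is generated by $F_N$ together with $t$, membership in $Z(\G)$ is exactly the conjunction of these two conditions.

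If $[\phi]$ has infinite order in $\out(F_N)$, then $[\phi^k]=1$ forces $k=0$, so $g=x\in F_N$. The remaining conditions then say $x\in Z(F_N)$, which is trivial because $N\ge 2$. This disposes of the first case.

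Now suppose $[\phi]$ has order $m$. The condition $[\phi^k]=1$ becomes $m\mid k$, say $k=jm$. Since $[\phi^m]=1$, there exists $x_0\in F_N$ with $\phi^m=\ad_{x_0^{-1}}$; this $x_0$ is unique because $F_N$ is centreless. To see that $x_0$ is automatically fixed by $\phi$, conjugate the identity $\phi^m=\ad_{x_0^{-1}}$ by $\phi$: using the general fact $\phi\circ\ad_g\circ\phi^{-1}=\ad_{\phi(g)}$, we deduce $\ad_{\phi(x_0)^{-1}}=\ad_{x_0^{-1}}$, hence $\phi(x_0)\,x_0^{-1}\in Z(F_N)=1$. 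Therefore $z:=x_0t^m$ satisfies both centrality conditions and lies in $Z(\G)$.

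It remains to show that $z$ generates $Z(\G)$. Because $\phi^m(x_0)=x_0$, the elements $x_0$ and $t^m$ commute, so $z^j=x_0^j t^{jm}$. Given any central $g=xt^{jm}$, the first condition reads $\ad_{x^{-1}}=\phi^{jm}=(\phi^m)^j=\ad_{x_0^{-j}}$, and once more the triviality of $Z(F_N)$ forces $x=x_0^j$, giving $g=z^j$. The proof is essentially bookkeeping with the semidirect-product relations; the only leverage needed beyond this is the repeated use of $Z(F_N)=\{1\}$ to promote equalities of inner automorphisms to equalities of elements, so there is no real obstacle to execute the plan.
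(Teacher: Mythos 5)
Your proof is correct and follows essentially the same route as the paper: a direct computation with the semidirect-product relations, extracting the two conditions $\phi^k=\ad_{x^{-1}}$ and $\phi(x)=x$ and using $Z(F_N)=1$ throughout. The only difference is cosmetic — the paper gets cyclicity of $Z(\G)$ by observing that it injects into $\G/F_N\cong\Z$, whereas you exhibit the generator explicitly and verify (more carefully than the paper does) that $\phi(x_0)=x_0$ and that every central element is a power of $x_0t^m$.
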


\begin{proof}   
	The centre of $\G$ injects into $\G/F_N\cong\Z$ 
	and is therefore cyclic.
	An element $xt^k$ is central if and only if 
	for any element $yt^l$ 
	\begin{equation*}
		xt^k \cdot yt^l = x \phi^k(y)t^{k+l}
		= y\phi^l(x)t^{k+l} = yt^l\cdot xt^k.
	\end{equation*}
	In particular, taking $y$ to be the identity and $l = 1$ 
	we get $\phi(x) = x$. 
	Thus, for any $y \in F_N$ we have 
	$\phi^k(y) = x^{-1}y x$, i.e. $\phi^k = \ad_{x^{-1}}$. 
	Hence the order of $\Phi = [\phi]$ in $\out(F_N)$ 
	is finite and divides $k$. 
	Conversely, if $\Phi$ has order $m$ in $\out(F_N)$, 
	then $\phi^m = \ad_{x_0}$ for some $x_0\in F_N$, 
	and calculating as above we see that $x_0^{-1}t^m$ is central.  
\end{proof}

In the following statement, 
we use the standard notation $G_e^g$ for 
the conjugate of the subgroup $G_e<\G$ by $g\in\G$. 
The careful reader may worry that
in order to make $G_e$ a bona fide subgroup of $\G=\pi_1\GG$
one needs to make a choice of maximal tree in the underlying graph. 
However, the conjugacy class of $G_e$ 
is independent of all choices, 
so the intersection in the proposition, taken over all conjugates, 
is well defined; 
it is the kernel of the action on the Bass-Serre tree. 

\begin{proposition}
\label{center_upstairs} 
	Let $\GG(V,E)$ be a reduced graph of infinite cyclic groups 
	and suppose that $\G=\pi_1\GG(V,E)$ has a non-trivial centre. 
	Then, provided $\G$ is not virtually abelian, its centre is 
	$$Z(\G) =\bigcap_{e \in E,\, g\in \G} G_e^g.$$
\end{proposition}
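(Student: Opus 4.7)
The plan is to identify the intersection $K := \bigcap_{e,g} G_e^g$ with the kernel of the $\G$-action on the Bass--Serre tree $T$ of $\GG$, which is immediate from Bass--Serre theory. Minimality of this action (a consequence of $\GG$ being reduced) and the hypothesis $Z(\G) \neq 1$ will then yield the two containments $Z(\G) \subseteq K$ and $K \subseteq Z(\G)$ separately.

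For $Z(\G) \subseteq K$, I take a non-trivial central $z \in \G$ and examine its action on $T$. If $z$ is elliptic then $\Fix(z)$ is $\G$-invariant (because $z$ is central), so by minimality $\Fix(z) = T$ and $z \in K$. If $z$ is hyperbolic, with axis $\ell_z$, then $\ell_z$ is $\G$-invariant and by minimality $T = \ell_z$ is a line; I will argue that this forces $\G$ to be virtually abelian, contradicting the hypothesis. The key observation is that each vertex stabiliser $G_v \cong \Z$ acts on $T \cong \R$ by fixing a point, hence either trivially or as a reflection. Since $G_v$ is infinite cyclic the kernel of this action has index at most two, so $K$ has finite index in every $G_v$ and in particular $K \cong \Z$. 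The quotient $\G/K$ then embeds as a discrete subgroup of $\isom(\R)$, and because it contains the hyperbolic image of $z$ it is virtually $\Z$; combined with $K \cong \Z$ this makes $\G$ virtually abelian.

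For $K \subseteq Z(\G)$, note that $K$ is contained in each cyclic $G_v$, hence is itself cyclic, say $K = \<k_0\>$. Normality of $K$ combined with $\aut(\Z) = \{\pm 1\}$ produces a sign homomorphism $\epsilon : \G \to \Z/2$ defined by $g k_0 g^{-1} = k_0^{\epsilon(g)}$. Each $G_v$ is abelian and contains $K$, so $G_v \subseteq \ker \epsilon$. By the first containment and the hypothesis $Z(\G) \neq 1$, a non-trivial $z_0 \in Z(\G)$ lies in $K$, so $z_0 = k_0^d$ for some $d \neq 0$, and computing $z_0 = g z_0 g^{-1} = k_0^{d\epsilon(g)}$ forces $\epsilon(g) = 1$ for every $g \in \G$. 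Hence $\epsilon$ is trivial and $K \subseteq Z(\G)$.

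The main obstacle is the hyperbolic case of $Z(\G) \subseteq K$: ruling out a minimal action of $\G$ on a line requires carefully bookkeeping how each vertex group $G_v \cong \Z$ can act on $\R$ and extracting the virtually abelian structure from the extension $1 \to K \to \G \to \G/K \to 1$. The remaining ingredients -- reducedness implying minimality, centrality forcing invariance of $\Fix$ and axes, and the sign-homomorphism argument -- are then routine.
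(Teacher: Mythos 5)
Your proof is correct and follows essentially the same route as the paper's: identify the intersection with the kernel of the action on the Bass--Serre tree, use minimality (from reducedness) to handle the elliptic case and rule out the hyperbolic case, and then use the sign homomorphism on the cyclic kernel together with $Z(\G)\neq 1$ for the reverse containment. The only difference is that you spell out in more detail why a minimal action on a line with cyclic stabilisers would force $\G$ to be virtually abelian, a step the paper leaves as a one-line remark.
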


\begin{proof} %The Klein bottle is easily dealt with, so we may assume that $G=F_N\rtimes\Z$ with $N\ge 2$.
	Let $z$ be a nontrivial element of $Z(\G)$. 
	There are two cases to consider: 
	in the action on the Bass-Serre tree $T$ of $\GG$, 
	either $z$ fixes a point or else 
	it acts as a hyperbolic element. 
	
	The second case is impossible: 
	as $\GG$ is reduced, the action of $\G$ on $T$ is minimal 
	(i.e. there are no proper invariant subtrees), 
	so the axis of $\<z\>$, which is $\G$-invariant, 
	would be the whole tree, contradicting the fact that 
	$\G$ is	not virtually abelian 
	and acts with cyclic arc stabilisers. 
	
	Thus each $z\in Z(\G)$ 
	fixes a non-empty subtree $T_0\subseteq T$, 
	and since this is $\G$-invariant, 
	we again use minimality to conclude $T_0=T$, 
	forcing $z$ to be contained in 
	$\bigcap \{G_e^g\mid e\in E, \, g\in \G\}$, 
	the kernel of the action on $T$. 
	
	To see that $Z(\G)$ is the whole of the kernel, 
	first observe that the latter, being non-trivial and
	an intersection of infinite cyclic groups, 
	is infinite cyclic. 
	It is also normal, so the action of $\G$ by conjugation on it 
	is either trivial or else some element acts 
	as multiplication by $-1$. 
	But this last case is impossible, 
	because the kernel contains $Z(\G)\neq 1$. 
\end{proof}

\begin{corollary}
\label{c:center_upstairs}
	Let $\G=F_N\rtimes_\phi\Z$ 
	where $[\phi]\in\outN$ has finite order, 
	considered as a positive GBS group $\G=\pi_1\GG(V,E)$ as in \cref{prop:graph_of_Zs}.
	Then, provided $\G \not\cong \Z^2$, its centre is
	$$Z(\G) =\bigcap_{e \in E,\, g\in \G} G_e^g.$$
\end{corollary}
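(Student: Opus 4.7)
The plan is to apply Proposition \ref{center_upstairs} to the GBS decomposition $\GG(V,E)$ furnished by Proposition \ref{prop:graph_of_Zs}. That proposition requires three hypotheses: $\GG$ is reduced in the Bass--Serre sense, $Z(\G)$ is non-trivial, and $\G$ is not virtually abelian.

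Non-triviality of $Z(\G)$ is immediate from Lemma \ref{free-by-cyclic-with-centre}, which exhibits a central element $xt^m$ when $[\phi]$ has order $m$ in $\out(F_N)$. For non-virtual-abelianness, observe that $F_N$ is normal in $\G$ with cyclic quotient $\Z$, so $\G$ is virtually abelian if and only if $F_N$ is, which forces $N\le 1$. The only possibilities then are $\Z$, $\Z^2$, or the Klein bottle group; $\Z^2$ is excluded by hypothesis, and the Klein bottle --- which is virtually abelian and so not covered by Proposition \ref{center_upstairs} --- is handled separately by direct inspection: its reduced GBS structure (Example \ref{e:klein}) has a single edge with both labels equal to $2$, the unique edge subgroup is itself central, and every conjugate of this subgroup equals itself by centrality, so the formula holds trivially.

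The only genuinely subtle point is reducedness. The GBS $\GG$ is constructed as the mapping torus of a reduced Culler realization $X$ equipped with an isometry $f$ of order $m$. If there were an edge $e \in E$ with distinct endpoints and $\delta_e = 1$, this would correspond to a $1$-cell of $X$ whose $\<f\>$-orbit has the same cardinality as the $\<f\>$-orbit of its terminal $0$-cell --- equivalently, the natural map from the orbit of $e$ to the orbit of $\tau(e)$ is a bijection. One could then contract all $1$-cells in this orbit equivariantly in $X$ without altering $\pi_1 X = F_N$ or the order of the $f$-action, producing a strictly smaller equivariant realization of $\phi$ and contradicting the reducedness of $X$. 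Hence $\delta_e > 1$ for every edge with distinct endpoints, so $\GG$ is Bass--Serre reduced.

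With all three hypotheses in place, Proposition \ref{center_upstairs} delivers the desired equality $Z(\G) = \bigcap_{e\in E,\, g\in\G} G_e^g$ at once. The principal obstacle is the reducedness argument, which rests on the interplay between the $\<f\>$-action on $X$ and equivariant edge collapses; once this is established, the remainder is a direct appeal to previously proved results.
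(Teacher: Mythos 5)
Your overall route is exactly the paper's: combine \cref{free-by-cyclic-with-centre} (non-trivial centre) with \cref{center_upstairs}, observe that the virtually abelian exceptions force $N\le 1$, exclude $\Z^2$ by hypothesis, and check the Klein bottle directly on the positive GBS structure of \cref{e:klein}. All of that is correct and matches the paper's (very brief) proof.

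The step that does not work is your reducedness argument. You claim that an edge orbit with $\delta_e=1$ and distinct endpoint orbits could be collapsed equivariantly, ``contradicting the reducedness of $X$''. But reducedness of the Culler realisation $X$ (no vertices of valence $\le 2$) is not the same as minimality under equivariant forest collapse: a reduced graph can perfectly well carry a non-trivial $\langle f\rangle$-invariant forest. This is precisely the situation exploited in \cref{s:last}, where $X^{\pm}$ is a reduced realisation admitting equivariant collapses onto $X^{+}$, $X^{-}$ and $X$. For a concrete counterexample to your conclusion, take $X$ to be the theta graph and $f$ the identity: $X$ is reduced, yet every edge of the resulting $\GG$ has distinct endpoints and $\delta_e=1$, so $\GG$ is not reduced in the Bass-Serre sense. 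Hence the contradiction you invoke does not arise, and the statement ``$\GG$ is reduced'' is false in general. The gap is repairable and harmless for the corollary: either choose $X$ to carry no non-trivial $\langle f\rangle$-invariant forest (collapse such forests first; this preserves $\pi_1$, the outer class and the absence of low-valence vertices, and your own computation then shows $\delta_e>1$ for every non-loop edge), or reduce $\GG$ afterwards by collapsing the offending edges, noting that discarding an edge group $G_e=G_{\tau(e)}$ does not enlarge the intersection because $G_{\tau(e)}$ contains a conjugate of some surviving edge group. As written, though, the justification of what you call ``the only genuinely subtle point'' is wrong; the paper simply passes over the issue in silence.
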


\begin{proof}
	\Cref{free-by-cyclic-with-centre} tells us that $Z(\G) \neq 1$, 
	so the corollary is immediate 
	except in the case of the Klein bottle group, 
	which is covered by \Cref{e:klein}. 
\end{proof}

We proved \cref{prop:graph_of_Zs} by realising $\G$ 
as the fundamental group of a graph $\GG$ of infinite-cyclic groups
with a choice of generator $t_e$ for each edge group 
and $t_v$ for each vertex group 
so that the inclusions were $\iota(t_e)= t_{\t(e)}^{\d_e}$. 
With \cref{c:center_upstairs} in hand, we can describe $\G/Z(\G)$
in a similar manner, but with finite cyclic groups 
in place of the local groups in $\GG$. 
One should think of this graph of finite cyclic groups $\-{\GG}$ 
as being obtained from $\GG$ by ``reducing mod $m$'', 
where $m$ is the order of the holonomy. 
More formally, it is described as follows. 

\begin{corollary}
\label{corollary:GmodZ}
	Let $\G=\pi_1\GG(V,E)$ be as in \cref{c:center_upstairs} 
	and equip the local groups of $\GG$ 
	with generators $t_e \ (e\in E)$ and $t_v\ (v\in V)$ as above, 
	so that $\iota(t_e)= t_{\t(e)}^{\d_e}$. 
	Let $d_e = [G_e \colon Z(\G)]$, let $d_v = [G_v\colon Z(\G)]$, 
	and let $\-{\GG}(V,E)$ be the graph of groups 
	obtained from $\GG(V,E)$ by replacing 
	$G_e\ (e\in E)$ with $\-{G}_e\cong \Z/d_e$ generated by $\-t_e$ 
	and by replacing $G_v\ (v\in V)$ with $\-{G}_v\cong \Z/d_v$
	generated by $\-t_v$, defining $\iota_e(\-t_e) = \-t_v^{\d_e}$. 
	Then, 
	\begin{enumerate}
		\item $\G/Z(\G) = \pi_1\-{\GG}(V,E)$ and
		\item $\d_e = d_{\t(e)} / d_e = [G_{\t(e)}: \iota_e(G_e)] = [\-G_{\t(e)}: \iota_e(\-G_e)]$.
	\end{enumerate} 
\end{corollary}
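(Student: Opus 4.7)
The plan is to derive both parts from \Cref{c:center_upstairs}, which identifies $Z(\G)$ with the kernel of the $\G$-action on the Bass--Serre tree $T$ of $\GG(V,E)$. The first observation is that, since $Z(\G)$ is central and lies in some conjugate $G_v^g$ (resp.\ $G_e^g$), it lies in $G_v$ (resp.\ $G_e$) itself; conjugation by $g$ fixes $Z(\G)$ pointwise, so $Z(\G)\subseteq G_v\cap G_e$ as subgroups of $\G$. In particular, since the $G_v$ and $G_e$ are infinite cyclic and $Z(\G)$ is nontrivial, the indices $d_v$ and $d_e$ are finite positive integers, and $\bar{G}_v, \bar{G}_e$ are finite cyclic.

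For part (1), quotienting out by the kernel $Z(\G)$ gives a faithful, minimal action of $\G/Z(\G)$ on the same tree $T$ with vertex stabilisers $G_v/Z(\G) = \bar G_v$ and edge stabilisers $G_e/Z(\G)=\bar G_e$. The inclusion $\iota_e\colon G_e\hookrightarrow G_{\tau(e)}$ descends to an injection $\bar\iota_e\colon \bar G_e\hookrightarrow \bar G_{\tau(e)}$ (well-defined and injective because $Z(\G)\subseteq \iota_e(G_e)$), and $\iota_e(t_e)=t_{\tau(e)}^{\delta_e}$ reduces modulo $Z(\G)$ to $\bar\iota_e(\bar t_e)=\bar t_{\tau(e)}^{\delta_e}$. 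By the standard correspondence between group actions on trees and graph-of-groups decompositions (\cite{serre1980trees}), this data is precisely the graph of finite cyclic groups $\bar{\GG}(V,E)$, and $\G/Z(\G)\cong\pi_1\bar{\GG}(V,E)$.

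For part (2), the equalities $\delta_e = [G_{\tau(e)}:\iota_e(G_e)]$ hold by the very definition of $\delta_e$ in the integer-labelled graph encoding $\GG$. Since $Z(\G)\subseteq \iota_e(G_e)\subseteq G_{\tau(e)}$, the multiplicativity of index yields
\begin{equation*}
    d_{\tau(e)} = [G_{\tau(e)}:Z(\G)] = [G_{\tau(e)}:\iota_e(G_e)]\cdot[\iota_e(G_e):Z(\G)] = \delta_e\cdot d_e,
\end{equation*}
using that $\iota_e$ is injective and carries $Z(\G)$ identically into $Z(\G)$, so $[\iota_e(G_e):Z(\G)]=[G_e:Z(\G)]=d_e$. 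Finally, the third isomorphism theorem gives $\bar G_{\tau(e)}/\bar\iota_e(\bar G_e)\cong G_{\tau(e)}/\iota_e(G_e)$, whence $[\bar G_{\tau(e)}:\bar\iota_e(\bar G_e)] = \delta_e$ as well.

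There is no substantial obstacle once \Cref{c:center_upstairs} is in hand; the only care needed is the bookkeeping that ensures $Z(\G)$ sits inside each local group (as opposed to a conjugate) and descends compatibly through the inclusion maps, so that the quotient graph of groups is literally the one described by replacing each infinite cyclic local group by its quotient by $Z(\G)$.
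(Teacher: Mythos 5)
Your proof is correct and follows exactly the route the paper intends: the corollary is stated without proof as an immediate consequence of \Cref{c:center_upstairs}, and your argument—identifying $Z(\G)$ with the kernel of the action on the Bass--Serre tree, quotienting the local groups, and using multiplicativity of index—is precisely the bookkeeping being left to the reader. The only redundancy is your centrality argument for $Z(\G)\subseteq G_e$, since \Cref{c:center_upstairs} already exhibits $Z(\G)$ as an intersection that includes the unconjugated $G_e$.
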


\begin{remark}%
[Comparison with Seifert fibred spaces]
	Associated to any finite-order automorphism $f$ of a surface, 
	one has the mapping torus $M_f$, a Seifert fibred 3-manifold.
	The fundamental group of the 2-orbifold base 
	of this Seifert fibred space is $\pi_1M_f$ modulo its centre. 
	The graph of finite cyclic groups $\GG(V,E)$ 
	is the direct analogue of this base orbifold 
	in the setting of free-by-cyclic groups with finite holonomy. 
\end{remark}

\begin{lemma}
\label{l:seeFxZ} 
	Suppose $\G=F_N\rtimes_\phi\Z$ with $[\phi]\in\outN$ of finite order, with $N\ge 2$.
	Then, $\G$ is isomorphic to $F_M\times\Z$, for some $M$, 
	if and only if $\G/Z(\G)$ is torsion-free
	or, equivalently, $\Gamma/Z(\Gamma)$ is free.
\end{lemma}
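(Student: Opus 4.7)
The plan is to reduce both assertions in the lemma to the classical fact that a finitely generated torsion-free virtually free group is free (Stallings' theorem on ends).

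First I would establish that $\G/Z(\G)$ is virtually free. This can be read off directly from Corollary \ref{corollary:GmodZ}, which presents $\G/Z(\G)$ as the fundamental group of a finite graph of finite cyclic groups, hence virtually free by Bass-Serre theory. Alternatively, one can argue at the level of $\G$: by Lemma \ref{free-by-cyclic-with-centre}, $Z(\G)=\<xt^m\>$, where $m$ is the order of $[\phi]$ and $\ad_{x^{-1}}=\phi^m$. The preimage of $m\Z$ under $\G\onto\Z$ is the index-$m$ subgroup $H=F_N\rtimes_{\phi^m}\Z$ of $\G$, and it contains $Z(\G)$. Since $\phi^m$ is inner, a short computation shows $(xt^m)^k=x^kt^{km}$, so $F_N\cap\<xt^m\>=1$ and $H$ decomposes as the internal direct product $F_N\times\<xt^m\>\cong F_N\times\Z$. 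Passing to the quotient by $Z(\G)$, we see that $H/Z(\G)\cong F_N$ sits as an index-$m$ subgroup in $\G/Z(\G)$. The ``or, equivalently'' clause of the lemma is then immediate: free implies torsion-free is tautological, and the converse is Stallings' theorem applied to the (finitely generated) virtually free group $\G/Z(\G)$.

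For the forward direction of the main biconditional, assume $\G\cong F_M\times\Z$. The hypothesis $N\ge 2$ forces $\G$ to be non-abelian, so $M\ge 2$; then $Z(F_M\times\Z)=1\times\Z$ and $\G/Z(\G)\cong F_M$ is free. For the reverse direction, suppose $\G/Z(\G)\cong F_M$. The short exact sequence
\[
1\longrightarrow Z(\G)\longrightarrow \G\longrightarrow F_M\longrightarrow 1
\]
splits because $F_M$ is free, so $\G\cong Z(\G)\rtimes F_M$; since $Z(\G)$ is central, the action is trivial and $\G\cong Z(\G)\times F_M\cong \Z\times F_M$, where the final identification uses that $Z(\G)$ is infinite cyclic (Lemma \ref{free-by-cyclic-with-centre}).

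There is no substantial obstacle: every step is either a direct assembly of results proved earlier in this section or a standard invocation of Stallings' theorem. The only mild pitfall is handling the low-rank cases cleanly in the forward direction, where the hypothesis $N\ge 2$ is needed to rule out the degenerate possibilities $M=0$ or $M=1$, either of which would make $F_M\times\Z$ abelian and hence incompatible with $\G=F_N\rtimes_\phi\Z$ having non-abelian free kernel.
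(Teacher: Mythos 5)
Your proof is correct. The core of your argument coincides with the paper's: both use \Cref{corollary:GmodZ} to understand $\G/Z(\G)$, and both conclude the reverse direction by observing that a central extension of a free group by $\Z$ splits and is therefore a direct product. The one place you diverge is the ``torsion-free $\Leftrightarrow$ free'' step: you establish that $\G/Z(\G)$ is virtually free (either via the graph of finite cyclic groups or via your nice explicit index-$m$ subgroup $H=F_N\times\<xt^m\>$) and then invoke Stallings' theorem. The paper avoids Stallings entirely by reading everything off the decomposition of \Cref{corollary:GmodZ}: a nontrivial local group in a graph of groups embeds in the fundamental group and so contributes torsion, whereas if all local groups are trivial the fundamental group is just the free fundamental group of the underlying graph. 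So the paper gets ``torsion-free iff free'' for free from the explicit presentation, while your route is marginally heavier machinery but also supplies an independent proof of virtual freeness that does not pass through the graph-of-groups picture. Your attention to the degenerate cases $M\le 1$ in the forward direction is a detail the paper glosses over, and is correct.
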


\begin{proof}
	\Cref{corollary:GmodZ} tells us 
	that $\G/Z(\G)$ is the fundamental group
	of a finite graph of finite cyclic groups, 
	so it has torsion unless all of the local groups are trivial, 
	in which case it is free 
	and $\G$ is a central $\Z$-by-$F_M$ extension, 
	hence a direct product.
\end{proof}

In \Cref{s:FxZ} we shall describe 
which free-by-cyclic groups are isomorphic to $F_M\times\Z$.

\section{Reduction to free-by-(finite cyclic) case}
\label{sec:reduction}

We will work with \emph{slide moves} on graphs of groups, 
which have played a prominent role in many works on GBS groups, 
including Forester's early work \cite{forester2002deformation}. 

\begin{definition}%
[slide move]
	Let $\GG(V,E)$ be a graph of groups and ${e_1, e_2\in E}$
	edges such that $\t(e_1) = o(e_2)$. 
	A \emph{slide move} can be performed when
	$\iota_{e_1}(G_{e_1}) \leq \iota_{\bar e_2}(G_{e_2})$: 
	the move deletes $\{e_1, \-e_1\}$ from $E$ 
	and replaces it with $\{e'_1, \-e'_1\}$, 
	where $o(e_1')=o(e_1)$ and $\t(e_1') = \t(e_2)$; 
	the local group at $e_1'$ is $G_{e_1'} = G_{e_1}$ 
	with $\iota_{e_1'} = 
		\iota_{e_2} \circ \iota_{\-e_2}^{-1} \circ \iota_{e_1}$ 
	and $\iota_{\-{e}_1'} = \iota_{\-e_1}$. 
	
	This gives a new graph of groups $\GG'(V,E')$
	with $\pi_1\GG\cong \pi_1\GG'$. 
\end{definition}

Note that in the case of graphs of cyclic groups, 
${\iota_{e_1}(G_{e_1}) \leq \iota_{\bar e_2}(G_{e_2})}$ 
is equivalent to requiring that 
the index of $G_{e_1}$ in the common vertex group 
divides the index of $G_{e_2}$. 

\smallskip

\subsection{Slide moves lift}

From \cref{corollary:GmodZ} we have:

\begin{lemma}
\label{l:lifting_iso}
	Let $\G_1$ and $\G_2$ be free-by-cyclic groups 
	with non-trivial centres and positive GBS decompositions  
	$\G_i=\pi_1\GG_i$ as in \cref{prop:graph_of_Zs}. 
	If $\-{\GG}_1$ can be transformed into $\-{\GG}_2$ 
	by slide moves,  
	then $\GG_1$ can be transformed into $\GG_2$ by slide moves, 
	and hence $\G_1\cong \G_2$. 
\end{lemma}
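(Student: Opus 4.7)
The plan is to lift each slide move on $\-{\GG}_1$ to a slide move on $\GG_1$ and to show that these lifts commute with the quotient by the centre: at every stage, the quotient by the centre of the slid-upstairs graph equals the slid-downstairs graph. Iterating will yield an upstairs graph $\GG'$ with $\pi_1\GG'=\G_1$ and $\GG'/Z(\G_1) = \-{\GG}_2$. Since a positive GBS graph of infinite cyclic groups is determined by its quotient modulo the centre, this will force $\GG' = \GG_2$ and hence $\G_1\cong\G_2$.

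First I would verify the single-step lift. A slide on $\-{\GG}$ along edges $e_1, e_2$ with $\tau(e_1) = o(e_2)$ is valid iff $\iota_{e_1}(\-{G}_{e_1}) \leq \iota_{\-{e}_2}(\-{G}_{e_2})$, i.e.\ $\d_{\-{e}_2} \mid \d_{e_1}$. By \cref{corollary:GmodZ}(2) the integer $\d_e$ labelling each edge agrees upstairs and downstairs, so the same slide is enabled on $\GG$. After the slide, the new inclusion $\iota_{e_2}\circ\iota_{\-{e}_2}^{-1}\circ\iota_{e_1}$ sends the preferred generator $t_{e_1}$ to $t_{\tau(e_2)}^{\d_{e_2}\d_{e_1}/\d_{\-{e}_2}}$, and the identical formula governs $\-{t}_{e_1}\mapsto \-{t}_{\tau(e_2)}^{\d_{e_2}\d_{e_1}/\d_{\-{e}_2}}$ downstairs; meanwhile the orders $d_v, d_e$ of the finite cyclic local groups are unchanged, since slide moves do not affect the vertex or edge groups as subgroups of $\pi_1\GG$ (and $Z$ is central, so $Z\cap G_v^g = Z\cap G_v$). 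Hence the quotient by the centre of the slid-upstairs graph is precisely the slid-downstairs graph.

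Iterating, the given sequence of slide moves $\-{\GG}_1\to\cdots\to\-{\GG}_2$ lifts to a sequence $\GG_1\to\cdots\to\GG'$ with $\GG'/Z(\G_1) = \-{\GG}_2$. Finally, the underlying graph $(V,E)$ and the integer labels $\d_e$ of a positive GBS graph of infinite cyclic groups are recoverable from its quotient via $\d_e = d_{\tau(e)}/d_e$ (\cref{corollary:GmodZ}(2)), so both $\GG'$ and $\GG_2$ are determined by their common quotient $\-{\GG}_2$ and coincide as integer-labelled graphs. Therefore $\G_1 = \pi_1\GG' = \pi_1\GG_2 = \G_2$. The main content of the argument is thus the matching of indices in \cref{corollary:GmodZ}(2): it is this that makes the slide conditions upstairs and downstairs literally identical and permits the clean lifting, so the ``hard part'' is really just the preparatory work already done in \cref{sec:GBS}.
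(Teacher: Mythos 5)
Your proposal is correct and follows essentially the same route as the paper: both arguments lift each slide move on $\-{\GG}_1$ to a slide on $\GG_1$ using the fact that the indices $\d_e = [G_{\t(e)}:\iota_e(G_e)] = [\-G_{\t(e)}:\iota_e(\-G_e)]$ of \cref{corollary:GmodZ}(2) agree upstairs and downstairs, and both conclude by observing that a positive GBS structure is determined by its underlying graph together with these indices, so the lifted endpoint must be $\GG_2$. Your write-up is somewhat more explicit than the paper's (verifying the divisibility condition enabling the lifted slide and checking that quotienting by the centre commutes with each slide), but the content is identical.
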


\begin{proof} 
	The graph of finite cyclic groups $\-{\GG}(V,E')$ 
	obtained by performing a slide move on $\-{\GG}_1(V,E)$ 
	inherits from $\-{\GG}_1$ 
	a preferred set of generators for the local groups, 
	and the new edge-inclusion map $\iota_{e'}$ 
	is calculated (in terms of these generators) 
	by the indices of the edge groups involved in the slide. 
	This data lifts uniquely to define a slide move 
	from $\GG_1(V,E)$ to a positive GBS structure $\GG(V,E')$, 
	with a preferred set of generators 
	for the local groups in $\GG(V,E')$. 
	At the end of the sequence of moves, 
	the positive GBS that we obtain 
	has the same underlying graph as $\-{\GG}_2$, 
	which is the same as $\GG_2$, 
	and the inclusions of the edge groups 
	are given by the correct indices 
	$\d_e = [G_{\t(e)}: \iota_e(G_e)] 
		= [\-G_{\t(e)}: \iota_e(\-G_e)]$, 
	so this graph of infinite cyclic groups is $\GG_2$. 
\end{proof}

\subsection{Profinite equivalence passes to the central quotients}

With the equivalence of \Cref{l:lifting_iso} in hand, 
an obvious strategy emerges for deducing \Cref{t:main} 
from \Cref{thm:free-by-finite-cyclic}: 
given two free-by-(infinite cyclic) groups 
$\Gamma_1$ and $\Gamma_2$ with finite monodromy, 
prove that if $\widehat{\Gamma}_1 \cong \widehat{\Gamma}_2$ 
then the graphs of finite-cyclic groups for 
$\Gamma_1/Z(\Gamma_1)$ and $\Gamma_2/Z(\Gamma_2)$ 
are equivalent via slide moves, 
then appeal to \Cref{l:lifting_iso}. 
If this strategy is to succeed, 
$\wh{\Gamma}_i$ must determine 
the profinite completion of $\Gamma_i/Z(\Gamma_i)$, 
so we begin by proving that. 

As is standard, we shall regard $\Gamma$ 
as a subgroup of $\wh{\Gamma}$ via the canonical embedding, 
and we shall write $\overline{\Lambda}$ 
for the closure in $\wh{\Gamma}$ 
of a subgroup $\Lambda \leq \Gamma$. 

First we need a lemma which implies 
that the order of any torsion element $[\phi]\in\out(F_N)$ 
equals the order of $[\widehat{\phi}]\in\out(\widehat{F}_N)$. 

\begin{lemma}
\label{inner-after-completion}
	The canonical homomorphism $\out(F_N) \to \out(\widehat{F}_N)$ 
	induced by the completion homomorphism 
	$\aut(F_N) \to \aut(\wh{F}_N)$ 
	is injective. 
\end{lemma}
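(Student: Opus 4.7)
The plan is to split the claim into two parts.

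\emph{Automorphism injectivity.} Since $F_N$ is residually finite, the canonical map $F_N\hookrightarrow\wh F_N$ is injective with dense image. Any $\phi\in\aut(F_N)$ therefore extends uniquely to a continuous automorphism $\wh\phi$ of $\wh F_N$, and restricting $\wh\phi$ back to $F_N$ recovers $\phi$. In particular, $\wh\phi=\mathrm{id}$ forces $\phi=\mathrm{id}$, so $\aut(F_N)\to\aut(\wh F_N)$ is injective.

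\emph{Outer injectivity.} The case $N=1$ is immediate: $\aut(\Z)=\{\pm 1\}$ injects into $\aut(\wh \Z)=\wh \Z^\times$, and inner automorphisms are trivial on both sides. So I assume $N\ge 2$, and suppose $\phi\in\aut(F_N)$ has $\wh\phi$ inner in $\aut(\wh F_N)$, say $\wh\phi=\ad_{\wh g}$ for some $\wh g\in\wh F_N$. Then for every $x\in F_N$ the element $\phi(x)=\wh g\, x\, \wh g^{-1}$ is conjugate to $x$ in $\wh F_N$. By the conjugacy separability of free groups, conjugacy classes of $F_N$ are closed in the profinite topology, so $x$ and $\phi(x)$ are in fact conjugate in $F_N$ for every $x\in F_N$. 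That is, $\phi$ is a \emph{pointwise-inner} (or \emph{conjugating}) automorphism of $F_N$.

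The final step, which I expect to be the main obstacle, is the classical theorem of Grossman that every pointwise-inner automorphism of a finitely generated free group is itself inner; this is the same result she used to establish the residual finiteness of $\out(F_N)$. Applying it to $\phi$ yields $\phi=\ad_h$ for some $h\in F_N$, completing the proof of outer injectivity.
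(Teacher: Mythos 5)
Your proof is correct and uses exactly the same two ingredients as the paper's argument: Grossman's Lemma~1 (a pointwise-inner automorphism of $F_N$ is inner) and the conjugacy separability of free groups. The only difference is that you argue the direct implication ($\wh\phi$ inner $\Rightarrow$ $\phi$ pointwise-inner $\Rightarrow$ $\phi$ inner) while the paper argues the contrapositive, which is an immaterial distinction.
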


\begin{proof} 
	If $\phi\in\autN$ is not an inner automorphism of $F_N$, 
	then by Lemma~1 of \cite{grossman1974residual} 
	there exists $x\in F_N$ such that 
	$\phi(x)$ is not conjugate to $x$ in $F_N$. 
	As $F_N$ is conjugacy separable, 
	%(see \cite{stebe1970residual}) 
	there exists a finite quotient $\pi:F_N\to Q$ 
	(extending to $\wh{\pi}:\wh{F}_N\to Q$) such that 
	$\pi(x)$ is not conjugate to $\pi\phi(x)$ in $Q$. 
	By definition, $\wh{\pi}\wh{\phi} (x) = \pi\phi(x)$ and 
	$\wh{\pi}(x) = \pi(x)$, 
	so $x$ is not conjugate to $\wh{\phi}(x) = \phi(x)$ 
	in $\wh{F}_N$, 
	and therefore $\wh{\phi}$ is not an inner automorphism. 
\end{proof}	

\begin{proposition}
\label{centers_computed}
	If $\G= F_N\rtimes_\phi\Z$ has non-trivial centre, 
	then $Z(\widehat{\G}) = \overline{Z(\G)}$
	and hence $\widehat{\G}/Z(\widehat{\G}) \cong \widehat{\G / Z(\G)}$.
\end{proposition}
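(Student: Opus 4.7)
My plan is to establish the two inclusions separately. The easy containment $\overline{Z(\Gamma)}\subseteq Z(\widehat{\Gamma})$ is immediate from density: $Z(\Gamma)$ commutes pointwise with the dense subgroup $\Gamma\subseteq\widehat{\Gamma}$, and $Z(\widehat{\Gamma})$ is closed. For the reverse inclusion, I would exploit a visible finite-index direct-product subgroup. By \Cref{free-by-cyclic-with-centre}, $Z(\Gamma)=\langle z\rangle$ for $z=x_0^{-1}t^m$ with $\phi^m=\ad_{x_0}$ and $m$ the order of $[\phi]$. Set $\Lambda=F_N\cdot\langle z\rangle$; centrality of $z$ together with the fact that its image in $\Gamma/F_N=\Z$ is $m\neq 0$ give $\Lambda\cong F_N\times\Z$ of index $m$ in $\Gamma$, with coset representatives $1,t,\ldots,t^{m-1}$.

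Passing to profinite completions, $\widehat{\Lambda}\cong\widehat{F_N}\times\widehat{\Z}$ is open of index $m$ in $\widehat{\Gamma}$ (the $\widehat{\Z}$-factor being $\overline{\langle z\rangle}$), and every $y\in\widehat{\Gamma}$ has a unique expression $y=\mu t^i$ with $\mu=(\mu_F,\mu_Z)\in\widehat{F_N}\times\overline{\langle z\rangle}$ and $0\le i<m$. If $y\in Z(\widehat{\Gamma})$, then in particular $yv=vy$ for every $v\in\widehat{F_N}$; using that $\overline{\langle z\rangle}$ commutes pointwise with $\widehat{F_N}$ (a closure argument from $z\in Z(\Gamma)$), this simplifies to
\begin{equation*}
	\widehat{\phi}^i(v)\;=\;\mu_F^{-1}v\mu_F \qquad \text{for all } v\in\widehat{F_N},
\end{equation*}
i.e.\ $\widehat{\phi}^i=\ad_{\mu_F^{-1}}$ in $\aut(\widehat{F_N})$, so $[\widehat{\phi}]^i=1$ in $\out(\widehat{F_N})$.

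Applying \Cref{inner-after-completion}, the order of $[\widehat{\phi}]$ equals the order $m$ of $[\phi]$, so $m\mid i$ and hence $i=0$. The displayed equation then reads $\mu_F\in Z(\widehat{F_N})$; this is trivial when $N\ge 2$ (a standard fact about free profinite groups), forcing $\mu_F=1$ and thus $y=\mu_Z\in\overline{\langle z\rangle}=\overline{Z(\Gamma)}$. The small cases $N\le 1$, for which $Z(\widehat{F_N})$ may be nonzero, are dispatched by the same decomposition supplemented with the constraint $yt=ty$, which pins $\mu_F=1$ directly. Finally, the promoted isomorphism $\widehat{\Gamma}/Z(\widehat{\Gamma})\cong\widehat{\Gamma/Z(\Gamma)}$ drops out from the standard identification $\widehat{G}/\overline{K}\cong\widehat{G/K}$ for $K$ normal in a finitely generated $G$. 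I expect the subtlest step to be the correct deployment of \Cref{inner-after-completion}: one must pass to $\out(\widehat{F_N})$ before trying to ``cancel'' a power of $\widehat{\phi}$, since $\widehat{\phi}$ itself need not have finite order in $\aut(\widehat{F_N})$. The subordinate fact $Z(\widehat{F_N})=1$ for $N\ge 2$ is standard but warrants a citation.
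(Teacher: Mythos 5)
Your proof is correct, and it reaches the same computation as the paper by a slightly different route. The paper first argues that the conjugation action of $\Z$ extends to $\wh{\Z}$ (because it has finite image in $\out(F_N)$), identifies $\wh{\G}\cong\wh{F}_N\rtimes_{\wh{\phi}}\wh{\Z}$, and then reruns the centre computation from \Cref{free-by-cyclic-with-centre} verbatim in the profinite semidirect product, concluding $Z(\wh{\G})=\{(x_0t^m)^\kappa\mid\kappa\in\wh{\Z}\}=\overline{\<x_0t^m\>}$. You instead sidestep the extension-of-the-action step by passing to the finite-index subgroup $\Lambda=F_N\times\<z\>$, whose closure is automatically $\wh{\Lambda}\cong\wh{F}_N\times\wh{\Z}$ by the standard finite-index correspondence, and then use the coset decomposition $y=\mu t^i$, $0\le i<m$. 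This is a genuine (if modest) advantage: the paper's formula $\kappa\cdot x=\wh{\phi}^{(\kappa\bmod m)}(x)$ is slightly delicate to state correctly because $\phi^m$ is inner but not trivial in $\aut(F_N)$, whereas your bookkeeping with coset representatives $1,t,\dots,t^{m-1}$ avoids the issue entirely. The essential content is identical in both arguments: commuting with $\wh{F}_N$ forces $\wh{\phi}^i$ to be inner, \Cref{inner-after-completion} upgrades ``$[\wh{\phi}]^i=1$ in $\out(\wh{F}_N)$'' to $m\mid i$, and triviality of $Z(\wh{F}_N)$ for $N\ge 2$ pins down the remaining factor. You are right that this last fact deserves a citation (it is also used implicitly, without comment, in the paper's version of the computation), and your final reduction $\wh{\G}/\overline{Z(\G)}\cong\wh{\G/Z(\G)}$ is the same standard step the paper takes.
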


\begin{proof} 
	The action of $\Z$ on $F_N$ by $n\mapsto \phi^n$ 
	has finite image in $\outN$ 
	and	therefore extends to an action of $\wh{\Z}$ 
	with the same image. 
	Thus $\widehat{F_N \rtimes_\phi \Z}
		\cong \widehat{F}_N \rtimes_{\widehat\phi} \widehat{\Z}$, 
	where the action of $\kappa\in\widehat\Z$ on $\widehat{F}_N$ is 
	$\kappa \cdot x = \widehat{\phi}^{(\kappa \mod m)}(x)$, 
	with $m$ the order of $[\phi]$ in $\outN$, 
	which by \Cref{inner-after-completion} is equal to 
	the order of $[\widehat{\phi}]$ in $\out(\widehat{F}_N)$. 
	
	In the proof of \Cref{free-by-cyclic-with-centre} 
	we calculated the centre of $F_n \rtimes_{\phi} {\Z}$ 
	and that calculation works equally well in $\widehat \G = 
		\widehat{F}_N \rtimes_{\widehat \phi} \widehat \Z$ 
	to show that 
	\begin{equation*}
		Z(\widehat \G) 
		= \{{x_0}^\kappa t^{\kappa m} \mid \kappa \in \widehat\Z\} 
		= \{(x_0t^m)^\kappa \mid \kappa \in \widehat{\Z}\},
	\end{equation*} 
	where $\< t\> = 1\rtimes\Z$ and	$\phi^m = \ad_{{x_0}^{-1}}$. 
	Thus 
	$Z(\widehat \G) = \overline{\< x_0t^m \>} = \overline{Z(\G)}$. 
\end{proof}

\subsection{Decompositions of  free-by-(finite cyclic) groups}
\label{s:free-by-fc}

By definition, a finitely generated free-by-(finite cyclic) group 
is a group $G$ that, for some $m>0$, 
fits into a short exact sequence 
\begin{equation}
\label{e:seq}
	1\to F_N \to G \to \Z/m \to 1.
\end{equation}
(This sequence need not split.)
These are precisely the groups that arise as 
fundamental groups of finite graphs of finite cyclic groups 
that are {\em orientable} in the sense that 
one can choose generators for the local groups   
so that all inclusions are described by the index, 
as in \Cref{prop:graph_of_Zs}: 

\begin{proposition}
\label{which_graphs} 
	A finitely generated group $G$ is free-by-(finite cyclic)
	if and only if there exists 
	a finite graph of finite cyclic groups $\GG(V,E)$ with 
	$G_v\cong \Z/d_v\ (e\in V)$ and 
	$G_e\cong \Z/d_e\ (e\in E)$ so that 
	\begin{enumerate}
		\item $G\cong\pi_1\GG$,
		\item $d_e$ divides $d_{\tau(e)}$ for all $e\in E$,
		\item there are generators $t_v$ for $G_v\ (v\in V)$ 
		and $t_e$ for $G_e=G_{\-e}\ (e\in E)$ so that 
		$$\iota_e(t_e) = t_{\t(e)}^{\delta_e}$$ 
		for all $e\in E$, where $\delta_e = d_{\tau(e)}/d_e$. 
	\end{enumerate} 
	Moreover, one can choose generators with these properties 
	for the local groups in any decomposition of $G$ 
	as a graph of finite groups. 
\end{proposition}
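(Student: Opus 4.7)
The plan is to read the structure off the canonical projection $p\colon G\twoheadrightarrow \Z/m$ with free kernel that defines ``free-by-(finite cyclic)''. In the forward direction I will use $p$ to pin down the generators inside any graph-of-finite-groups decomposition of $G$, which simultaneously delivers the ``moreover'' clause; in the reverse direction I will construct $p$ directly from the combinatorial data of $\GG$.

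For the reverse direction, I set $m = \lcm_{v\in V} d_v$ and define $p\colon G = \pi_1\GG \to \Z/m$ on generators by $t_v \mapsto m/d_v$ and by sending every stable letter (one for each edge outside a chosen spanning tree) to $0$. The non-vertex defining relations take the form $s_e^{-1}\iota_{\bar e}(t_e)s_e = \iota_e(t_e)$, and property (3) makes both sides evaluate to $m/d_e$ under $p$ (noting that $\Z/m$ is abelian), so $p$ descends to a homomorphism. Because $p$ restricts to an isomorphism of $G_v$ onto $\<m/d_v\>\leq \Z/m$, the kernel intersects every conjugate of every vertex group trivially, so $\ker p$ acts freely on the Bass-Serre tree of $\GG$ and is therefore free. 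It has finite index in $G$, hence is finitely generated, and $G/\ker p$ is finite cyclic.

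For the forward direction, I fix an arbitrary decomposition of $G$ as a finite graph of finite groups $\GG(V,E)$; this exists by the Karrass-Pietrowski-Solitar theorem (finitely generated virtually free), or can be built directly by applying Culler's realization to the outer action $\Z/m \to \out(F_N)$ associated to the extension and quotienting the universal cover, as in \Cref{prop:graph_of_Zs}. Let $p\colon G\twoheadrightarrow \Z/m$ be the given projection with free kernel. Since $\ker p$ is torsion-free, every finite subgroup of $G$ injects into $\Z/m$ under $p$; in particular each local group is cyclic of some order $d_v$ or $d_e$, and the unique subgroup of $\Z/m$ of order $d_v$ is $\<m/d_v\>$. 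Hence there is a unique generator $t_v$ of $G_v$ with $p(t_v)=m/d_v$, and similarly $t_e$ for each edge group. For any edge $e$, the element $\iota_e(t_e) \in G_{\tau(e)}$ satisfies
$$p(\iota_e(t_e)) = p(t_e) = m/d_e = \delta_e \cdot (m/d_{\tau(e)})$$
with $\delta_e = d_{\tau(e)}/d_e$, and injectivity of $p|_{G_{\tau(e)}}$ forces $\iota_e(t_e) = t_{\tau(e)}^{\delta_e}$, which is (3); condition (2) is immediate from the injectivity of $\iota_e$.

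The main obstacle is securing the graph-of-finite-groups decomposition in the forward direction. Karrass-Pietrowski-Solitar supplies one at once, but for a self-contained account one has to spell out the Culler construction and then identify the resulting extension with $G$, which relies on the vanishing of $H^2(\Z/m, Z(F_N))$ for $N\geq 2$ (the cases $N\leq 1$ being handled by inspection). Beyond this, the argument is almost formal: the projection $G\to \Z/m$ is injective on finite subgroups, which forces each local group to be cyclic, determines its preferred generator uniquely, and then dictates the form of every inclusion $\iota_e$.
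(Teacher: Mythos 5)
Your proof is correct and follows essentially the same route as the paper: both directions use the standard fact that a finitely generated virtually free group decomposes as a finite graph of finite groups, pick out preferred generators of the local groups via the projection to $\Z/m$ (your explicit choice $p(t_v)=m/d_v$ is the precise version of the paper's ``map to $\theta^i$''), and for the converse build the homomorphism to $\Z/m$ killing the stable letters so that its kernel acts freely on the Bass--Serre tree. The only superfluous element is the worry about $H^2(\Z/m,Z(F_N))$ in the parenthetical alternative to Karrass--Pietrowski--Solitar; the KPS appeal you make is exactly what the paper relies on and nothing more is needed.
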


\begin{proof} 
	First assume that 
	$G$ fits into the short exact sequence~(\ref{e:seq}). 
	As $G$ is virtually free, it is the fundamental group 
	of a finite graph of finite groups $\GG$. 
	The edge and vertex groups of $\GG$ 
	intersect the torsion-free subgroup $F_N<G$ trivially, 
	so they inject into $\Z/m$ and are cyclic. 
	We fix a generator $\theta$ for $\Z/m$ and choose generators 
	for the non-trivial edge and vertex groups of $\GG$ 
	that map to $\theta^i$ with $1\le i < m$. 
	Note that the map from $G$ to $\Z/m$ 
	commutes with conjugation in $G$, 
	so this choice of generators is not subject to the ambiguity 
	inherent in regarding the local groups as subgroups of $G$ 
	(since this ambiguity is just a matter of 
	passing between conjugates). 
	
	For the converse, we assume that 
	$G$ is the fundamental group of 
	a finite graph of cyclic groups $\GG$ 
	with generators as described and 
	fix an integer $M$ 
	that is divisible by the order of each of the local groups. 
	Recall that $G=\pi_1\GG$ is generated by 
	the vertex groups $G_v=\<t_v\>$ 
	together with a set of edge symbols 
	(see page 42 of \cite{serre1980trees}). 
	We define a homomorphism $h: G\to \Z/M=\<\theta\>$ 
	by mapping the edge symbols trivially 
	and sending $t_v$ to $\theta^{\nu(v)}$ where $\nu(v)=M/|G_v|$.  
	Condition (3) ensures that $h$ is well-defined, 
	i.e. that it respects the defining relations of $G$ 
	(see page 43 of \cite{serre1980trees}). 
	Every finite subgroup of $G$ 
	is conjugate to a subgroup of a vertex group, 
	so the kernel of $h$ is	torsion-free and hence free. 
	Thus $G$ if free-by-(finite cyclic). 
\end{proof}

\begin{remark}
\label{rmk:which_come_from_G/Z}
	We saw in the previous section that 
	if $\G=F_N\rtimes\Z$ has non-trivial centre
	then $G=\G/Z(\G)$ is a free-by-(finite cyclic), 
	but not all free-by-(finite cyclic) groups 
	arise in this way. 
	One way to see this is to observe that 
	the commutator subgroup $\G'$ 
	lies in the normal subgroup $F_N \rtimes 1$, 
	and this intersects the centre $Z(\G)$ trivially 
	(provided $\G\neq \Z^2$), 
	as we saw in \Cref{free-by-cyclic-with-centre}. 
	This forces the centre of $G = \G/Z(\G)$ to be trivial: 
	if the image of $z\in\G$ in $G$ were central, 
	then $[x, z] \in Z(\G)\cap\G'=1$ for all $x \in \G$, 
	hence $z\in Z(\G)$. 
	Arguing as in \Cref{center_upstairs}, it follows that 
	the orders $|G_e|$ of the edge groups in any 
	graph-of-finite-groups decomposition of $G$ 
	must have greatest common divisor equal to $1$, 
	which is a non-trivial constraint. 
\end{remark}

\section{Solution of the reduced problem}

\subsection{
	Numbered graphs, associated posets, and slide equivalence}
\label{s:poset-slides}

In this section we introduce a key technical device -- 
the poset associated to a numbered graph, 
and prove a result that will enable us to promote isomorphisms
to sequences of sliding moves in various contexts -- 
\Cref{iso_posets_sliding_equivalent}. 

All of the information about a graph of groups 
of the type described in \Cref{which_graphs} 
is captured by the sizes of the vertex and edge groups, 
so we introduce the following data structure 
to facilitate a concise calculus of manipulation for them. 
We continue to use the word {\em graph} in the sense of Serre; 
so $\{e,\-e\}$ corresponds to a 1-cell 
in the topological realisation of the graph, 
which need not be simplicial. 

\begin{definition}
[Numbered graph] 
\label{def:numbered_graph}
	A \emph{numbered graph} $(\mathcal{G},d)$ consists of
	a finite graph $\mathcal{G}(V,E)$ 
	%---possibly with multi-edges and loops---%
	and a labelling $d:V\sqcup E\to \N$ such that 
	$d_e=d_{\-e}$ and $d_e$ divides $d_{\t(e)}$ for every $e\in E$. 
	A numbered graph is \emph{reduced} 
	if $d_{\t(e)} = d_e$ implies $\t(e) = \t(\-e)$ 
	(that is, $e$ is a loop).
	
	When there is no danger of ambiguity, 
	we will abbreviate $(\mathcal{G},d)$ to $\mathcal{G}$. 
\end{definition}

\begin{definition}
[Associated poset]
	To each numbered graph $(\mathcal{G},d)$ we associate 
	a finite poset $\mathsf{P}_\mathcal{G}$, 
	whose elements are the connected components 
	of certain subgraphs of $\mathcal{G}$. 
	Each element is assigned two pieces of 
	additional numerical data -- a \emph{level} and a \emph{tag}.
	\begin{itemize}
		\item 
			For $k \in \N$, let $a_{1, k}, a_{2, k}, \ldots$
			be the non-empty connected components
			of the subgraph $\mathcal{G}_k \subseteq \mathcal{G}$
			formed by the vertices and edges 
			to which $d$ assigns a value divisible by $k$.
			These $a_{i,k}$ will be the elements of 
			\emph{level} $k$.
		\item 
			$\mathsf{P}_\mathcal{G} 
				= \coprod_{k\in\N}\{a_{1, k}, a_{2, k}, \ldots\}$.
		\item 
			The ordering: define %on $\mathsf{P}_\mathcal{G}$ is
			$a_{i, k} \preceq a_{j, l}$
			if and only if 
			$k\ |\ l$ and $a_{i, k} \supseteq a_{j, l}$. 
		\item 
			Each element $a_{i, k}$ is \emph{tagged}
			with its first Betti number $\b_1(a_{i, k})$,
			i.e. the number of edges 
			minus the number of vertices plus one. 
	\end{itemize}
	A bijection $\mathsf{P}_\mathcal{G}\to \mathsf{P}_\mathcal{G'}$ 
	is a {\em data-preserving isomorphism} 
	if it is a poset isomorphism that preserves levels and tags. 
\end{definition}

\begin{remarks} 
	\begin{enumerate}
		\item 
			It is important to note that the labels (numbers) 
			on the edges of $\mathcal{G}$ 
			are {\em not} explicitly recorded 
			as part of the data of $\mathsf{P}_\mathcal{G}$ 
			-- cf. Step 4 in the proof of 
			\Cref{iso_posets_sliding_equivalent}. 
		\item 
			If the labels on the edges of 
			$a_{i,k} \subseteq \mathcal{G}_k$ 
			are all divisible by $k' > k$, 
			then $a_{i,k}=a_{i',k'}$ 
			as subgraphs of $\mathcal{G}_{k'}$ for some $i'$, 
			but $a_{i,k}\neq a_{i',k'}$ 
			as elements of $\mathsf{P}_\mathcal{G}$, 
			since their levels $k$ and $k'$ are not equal. 
		\item 
			The unique minimal element of $\mathsf{P}_\mathcal{G}$ 
			is $a_{1,1}=\mathcal{G}_1=\mathcal{G}$. 
			If all of the numbers in the labelling of $\mathcal{G}$ 
			are divisible by $r>1$, 
			then there will also be a unique element 
			$a_{1,r}={\mathcal{G}_r}=\mathcal{G}$ 
			at level $r$, 
			and $a_{1,1}\prec a_{1,r}$. 
	\end{enumerate}
\end{remarks} 

\begin{example}
	Let $\mathcal{G}$ be the numbered graph 
	in \Cref{fig:numbered_graph}. 
	The process of constructing 
	its associated poset $\mathsf{P}_\mathcal{G}$ 
	is portrayed in \Cref{fig:associated_poset}. 
	Note that in addition to the three elements at level $12$, 
	there is a maximal element $a_{1,4}$ at level $4$; 
	these maximal elements correspond 
	to the vertices of $\mathcal{G}$. 
	\begin{figure}
		\begin{center}
			\begin{tikzpicture}%
	[main/.style = {draw, circle, minimum size=2em, inner sep=1},
	node distance=1.5cm]
	
	% Original graph
	\node[main] (1) {4};
	\node[main] (2) [below left of=1] {12};
	\node[main] (3) [below right of=1] {12};
	\node[main] (4) [below right of=2] {12};
	
	\draw (1) -- node[midway, sloped] {2} (2);
	\draw (1) -- node[midway, sloped] {2} (3);
	\draw (2) -- node[midway] {6} (3);
	\draw (2) -- node[midway, sloped] {3} (4);
	\draw (3) -- node[midway, sloped] {1} (4);
	\draw (1) to [out=45,in=135,looseness=5] node[midway] {4} (1);
	
	% Copy of the graph to the right
	\coordinate (c1b) at ($(1)+(4,0)$);
	\node[main] (1b) at (c1b) {4};
	\node[main] (2b) [below left of=1b] {12};
	\node[main] (3b) [below right of=1b] {12};
	\node[main] (4b) [below right of=2b] {12};
	
	\draw (1b) to [out=195, in=75, looseness=1] node[midway, sloped] {2} (2b);
	\draw (1b) -- node[midway, sloped] {2} (2b);
	\draw (2b) -- node[midway] {6} (3b);
	\draw (2b) -- node[midway, sloped] {3} (4b);
	\draw (3b) -- node[midway, sloped] {1} (4b);
	\draw (1b) to [out=45, in=135, looseness=5] node[midway] {4} (1b);
\end{tikzpicture}
		\end{center}
		\caption{
			An example of a numbered graph 
			together with a graph obtained from it 
			by sliding one of the edges labelled $2$ 
			along the edge labelled $6$.}
		\label{fig:numbered_graph}
	\end{figure}
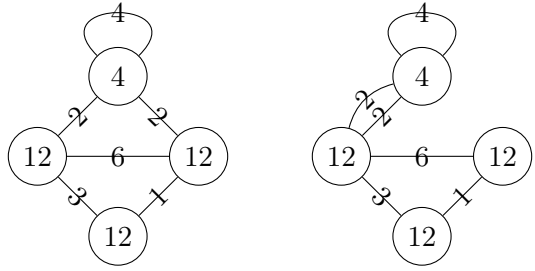
	
	\begin{figure}[p]
		\begin{center}
			\begin{tikzpicture}[
main/.style = {
	draw, circle,
	minimum size=1.8em,
	inner sep=1},
poset/.style = {
	draw, circle,
	minimum size=2.3em,
	inner sep=1},
highlight/.style = {
	line cap=round,
	line width=2.3em,
	gray, opacity=.3},
node distance=3.3em,
scale=0.47]

\coordinate (label-first) at (5,0);
\coordinate (last-poset) at (5,0);
\coordinate (poset-label) at (-1.2,0);
\coordinate (level-sep) at (0,7);
let \looseness = 200;

\def\columnNameScale{1.3};
\def\levelNameScale{1.5};
\coordinate (o-12-6-3) at (0,0.5);
\coordinate (o-4-2-1) at (0,-0.5);
\coordinate (op-4) at (0,0);
\coordinate (op-3) at (1.5,0);
\coordinate (op-2) at (3,0);
\coordinate (op-1) at (4,0);

% Graph for 12

\node (0) {};
\coordinate (l12) at (0);
\node[scale=\levelNameScale] at (l12) {$12$};

\node[main] (2) at ($(l12)+(label-first)+(o-12-6-3)$) {12};
\node (1) [above right of=2] {};
\node[main] (3) [below right of=1] {12};
\node[main] (4) [below right of=2] {12};

\draw[highlight] (2.center) -- node[left, black, opacity=.8] {$a_{1,12}$} (2.center);
\draw[highlight] (3.center) -- node[right, black, opacity=.8] {$a_{2,12}$} (3.center);
\draw[highlight] (4.center) -- node[left, black, opacity=.8] {$a_{3,12}$} (4.center);

\coordinate (p12) at ($(3)+(last-poset)-(o-12-6-3)$);
\node[poset] (a1-12) at ($(p12)+(op-3)$) {$a_{1,12}$};
\node at ($(a1-12)+(poset-label)$) {0};
\node[poset] (a2-12) [right of=a1-12] {$a_{2,12}$};
\node at ($(a2-12)+(poset-label)$) {0};				
\node[poset] (a3-12) [right of=a2-12] {$a_{3,12}$};
\node at ($(a3-12)+(poset-label)$) {0};

% Graph for 6

\coordinate (l6) at ($(l12)-(level-sep)$);
\node[scale=\levelNameScale] at (l6) {$6$};

\node[main] (6) at ($(l6)+(label-first)+(o-12-6-3)$) {12};
\node (5) [above right of=6] {};
\node[main] (7) [below right of=5] {12};
\node[main] (8) [below right of=6] {12};

\draw (6) -- node[midway] {6} (7);

\draw[highlight] (6.center) -- node[left=3em, black, opacity=.8] {$a_{1,6}$} (7.center);
\draw[highlight] (8.center) -- node[left, black, opacity=.8] {$a_{2,6}$} (8.center);

\coordinate (p6) at ($(7)+(last-poset)-(o-12-6-3)$);
\node[poset] (a1-6) at ($(p6)+(op-2)$) {$a_{1,6}$};
\node at ($(a1-6)+(poset-label)$) {0};
\node[poset] (a2-6) [right of=a1-6] {$a_{2,6}$};
\node at ($(a2-6)+(poset-label)$) {0};
\draw (a1-6) -- (a1-12);
\draw (a1-6) -- (a2-12);
\draw (a2-6) -- (a3-12);

% Graph for 4

\coordinate (l4) at ($(l6)-(level-sep)$);
\node[scale=\levelNameScale] at (l4) {$4$};

\node[main] (10) at ($(l4)+(label-first)+(o-4-2-1)$) {12};
\node[main] (9) [above right of=10]{4};
\node[main] (11) [below right of=9] {12};
\node[main] (12) [below right of=10] {12};

\draw (9) to [out=45,in=135,looseness=5] node[midway] {4} (9);

\draw[highlight] ($(9.center)+0.01*(1,1)$) to [out=45, in=135, looseness=\looseness]
node[xshift=-3.5em, yshift=-1em, black, opacity=.8] {$a_{1,4}$} ($(9.center)+0.01*(-1,1)$);
\draw[highlight] (10.center) -- node[left, black, opacity=.8] {$a_{2,4}$} (10.center);
\draw[highlight] (11.center) -- node[left, black, opacity=.8] {$a_{3,4}$} (11.center);
\draw[highlight] (12.center) -- node[left, black, opacity=.8] {$a_{4,4}$} (12.center);

\coordinate (p4) at ($(11)+(last-poset)-(o-4-2-1)$);
\node[poset] (a1-4) at ($(p4)+(op-4)$) {$a_{1,4}$};
\node at ($(a1-4)+(poset-label)$) {1};
\node[poset] (a2-4) [right of=a1-4] {$a_{2,4}$};
\node at ($(a2-4)+(poset-label)$) {0};
\node[poset] (a3-4) [right of=a2-4] {$a_{3,4}$};
\node at ($(a3-4)+(poset-label)$) {0};
\node[poset] (a4-4) [right of=a3-4] {$a_{4,4}$};
\node at ($(a4-4)+(poset-label)$) {0};

\draw (a2-4) -- (a1-12);
\draw (a3-4) -- (a2-12);
\draw (a4-4) -- (a3-12);

% Graph for 3

\coordinate (l3) at ($(l4)-(level-sep)$);
\node[scale=\levelNameScale] at (l3) {$3$};

\node[main] (14) at ($(l3)+(label-first)+(o-12-6-3)$) {12};
\node (13) [above right of=14]{};
\node[main] (15) [below right of=13] {12};
\node[main] (16) [below right of=14] {12};

\draw (14) -- node[midway] {6} (15);
\draw (14) -- node[midway, sloped] {3} (16);\

\draw[highlight, rounded corners=0.2em] (15.center) -- node[left=3em, black, opacity=.8] {$a_{1,3}$} (14.center) -- (16.center);

\coordinate (p3) at ($(15)+(last-poset)-(o-12-6-3)$);
\node[poset] (a1-3) at ($(p3)+(op-1)$) {$a_{1,3}$};
\node at ($(a1-3)+(poset-label)$) {0};

\draw (a1-3) to [out=90, in=-85] (a1-6);
\draw (a1-3) to [out=90, in=-115] (a2-6);

% Graph for 2

\coordinate (l2) at ($(l3)-(level-sep)$);
\node[scale=\levelNameScale] at (l2) {$2$};

\node[main] (18) at ($(l2)+(label-first)+(o-4-2-1)$) {12};
\node[main] (17) [above right of=18] {4};
\node[main] (19) [below right of=17] {12};
\node[main] (20) [below right of=18] {12};

\draw (17) -- node[midway, sloped] {2} (18);
\draw (17) -- node[midway, sloped] {2} (19);
\draw (18) -- node[midway] {6} (19);
\draw (17) to [out=45,in=135,looseness=5] node[midway] {4} (17);

\draw[highlight, rounded corners=0.2em]
($(17.center)+0.01*(1,1)$) to [out=45, in=135, looseness=\looseness]
node[xshift=-3.5em, yshift=-1em, black, opacity=.8] {$a_{1,2}$}
($(17.center)+0.01*(-1,1)$) --
(18.center) --
(19.center) --
(17.center);
\draw[highlight] (20.center) -- node[left, black, opacity=.8] {$a_{2,2}$} (20.center);

\coordinate (p2) at ($(19)+(last-poset)-(o-4-2-1)$);
\node[poset] (a1-2) at ($(p2)+(op-2)$) {$a_{1,2}$};
\node at ($(a1-2)+(poset-label)$) {2};
\node[poset] (a2-2) [right of=a1-2] {$a_{2,2}$};
\node at ($(a2-2)+(poset-label)$) {0};

\draw (a1-2) to [out=96.5, in=-115] (a1-6);
\draw (a2-2) to [out=85, in=-80] (a2-6);
\draw (a1-2) -- (a1-4);
\draw (a1-2) -- (a2-4);
\draw (a1-2) to [out=70, in=-85] (a3-4);
\draw (a2-2) -- (a4-4);

% Graph for 1

\coordinate (l1) at ($(l2)-(level-sep)$);
\node[scale=\levelNameScale] at (l1) {$1$};

\node[main] (22) at ($(l1)+(label-first)+(o-4-2-1)$) {12};
\node[main] (21) [above right of=22] {4};
\node[main] (23) [below right of=21] {12};
\node[main] (24) [below right of=22] {12};

\draw (21) -- node[midway, sloped] {2} (22);
\draw (21) -- node[midway, sloped] {2} (23);
\draw (22) -- node[midway] {6} (23);
\draw (22) -- node[midway, sloped] {3} (24);
\draw (23) -- node[midway, sloped] {1} (24);
\draw (21) to [out=45,in=135,looseness=5] node[midway] {4} (21);

\draw[highlight, rounded corners=0.2em]
(23.center) --
($(21.center)+0.01*(1,1)$) to [out=45, in=135, looseness=\looseness]
node[xshift=-3.5em, yshift=-1em, black, opacity=.8] {$a_{1,2}$}
($(21.center)+0.01*(-1,1)$) --
(22.center) --
(23.center) --
(24.center) --
(22.center);

\coordinate (p1) at ($(23)+(last-poset)-(o-4-2-1)$);
\node[poset] (a1-1) at ($(p1)+(op-1)$) {$a_{1,1}$};
\node at ($(a1-1)+(poset-label)$) {3};

\draw (a1-1) -- (a1-3);
\draw (a1-1) -- (a1-2);
\draw (a1-1) -- (a2-2);

% Dividers
\coordinate (offset) at (-1.5,0);
\coordinate (div) at ($(a4-4)-(l4)-2*(offset)$);

\coordinate (d12-up) at ($(l12)+0.5*(level-sep)$);
\draw [dashed] ($(d12-up)+(offset)$) -- ($(d12-up)+(div)$);
\coordinate (d12-6) at ($0.5*(l12)+0.5*(l6)$);
\draw [dashed] ($(d12-6)+(offset)$) -- ($(d12-6)+(div)$);
\coordinate (d6-4) at ($0.5*(l6)+0.5*(l4)$);
\draw [dashed] ($(d6-4)+(offset)$) -- ($(d6-4)+(div)$);
\coordinate (d4-3) at ($0.5*(l4)+0.5*(l3)$);
\draw [dashed] ($(d4-3)+(offset)$) -- ($(d4-3)+(div)$);
\coordinate (d3-2) at ($0.5*(l3)+0.5*(l2)$);
\draw [dashed] ($(d3-2)+(offset)$) -- ($(d3-2)+(div)$);
\coordinate (d2-1) at ($0.5*(l2)+0.5*(l1)$);
\draw [dashed] ($(d2-1)+(offset)$) -- ($(d2-1)+(div)$);

% Column names
\node[scale=\columnNameScale] (level) at ($(l12)+0.65*(level-sep)$)
{\phantom{p}level $k$};
\node[scale=\columnNameScale] (components) at (level-|1)
{$k$-components\phantom{l}};
\node[scale=\columnNameScale] (poset) at (level-|a2-12)
{elements of $\mathsf{P}_\mathcal{G}$ at level $k$};

\end{tikzpicture}
		\end{center}
		\caption{
			The poset associated to 
			both of the graphs in \Cref{fig:numbered_graph}.}
		\label{fig:associated_poset}
	\end{figure}
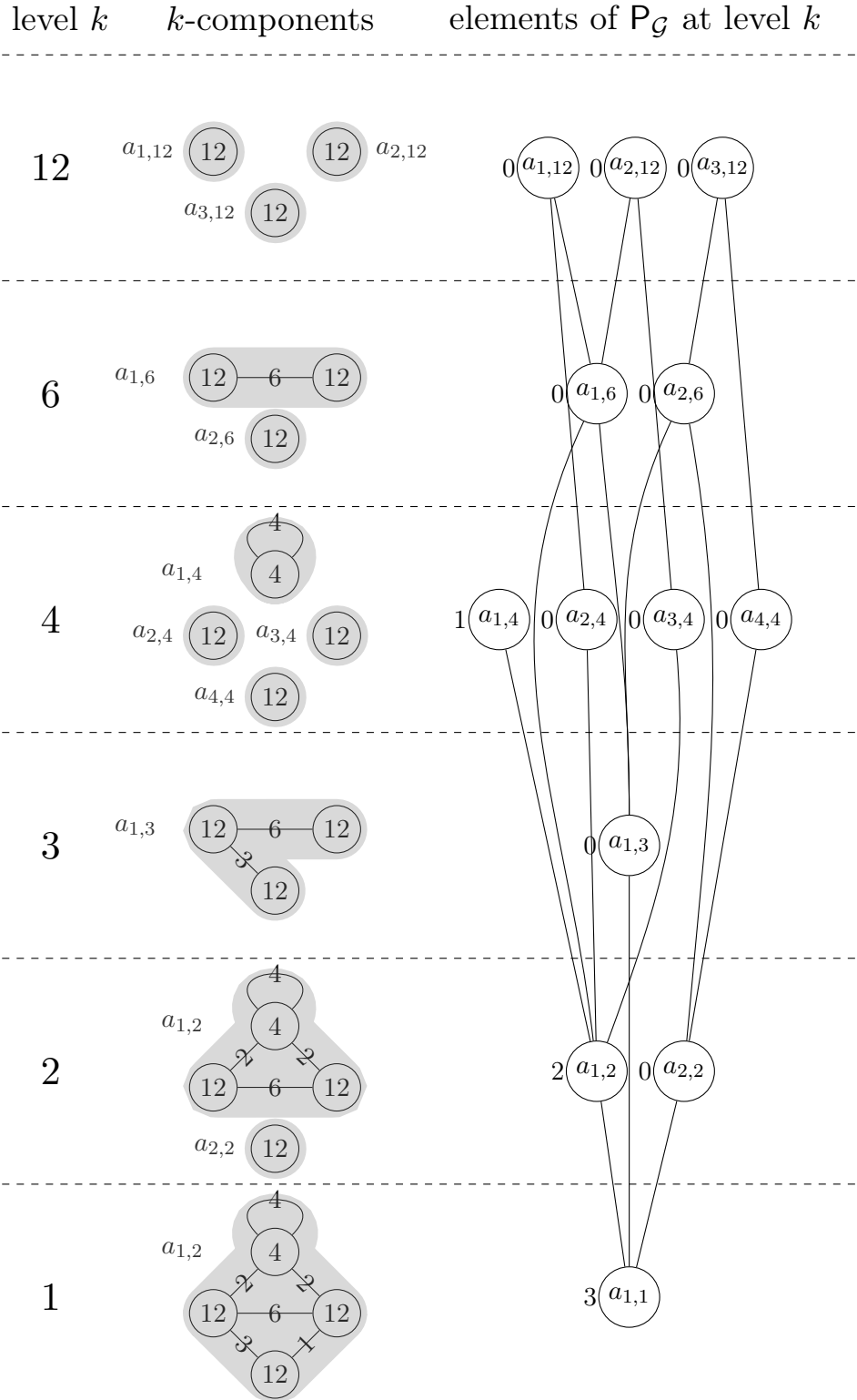
\end{example}

\begin{definition}
[Slide moves on numbered graphs]
	Let $\mathcal{G}(V,E)$ be a numbered graph 
	with distinct edges $e, f$ 
	such that $d_e$ divides $d_f$ and $\t(e) = o(f)$. 
	Consider $\mathcal{G}'=(V,E')$ 
	where $E'$ is obtained from $E$ 
	by replacing $\{e,\-e\}$ with  $\{e',\-e'\}$ 
	where $o(e') := o(e)$ and $\t(e') := \t(f)$. 
	Number the new edge by $d_{e'} := d_e$ 
	and leave the numbering of the other edges unchanged. 
	This transformation $\mathcal{G}\rightsquigarrow\mathcal{G}'$ 
	is called a {\em slide move}. 
\end{definition}

Note that slide moves take \emph{reduced} numbered graphs 
to reduced numbered graphs. 
Crucially, the definitions have been framed so that 
slide moves also induce isomorphisms of the associated posets. 
%an order-preserving bijection on the
%furthermore, it doesn't change the relation between the components $a_{i, k}$ 
%and thus also the associated poset $\mathsf{P}_\mathcal{G}$.

\begin{proposition}
\label{iso_posets_sliding_equivalent}
	Let $\mathcal{G}$ and $\mathcal{G}'$ 
	be reduced numbered graphs. 
	There is a data-preserving isomorphism 
	of the associated posets 
	$\mathsf{P}_\mathcal{G}$ and $\mathsf{P}_{\mathcal{G}'}$ 
	if and only if 
	$\mathcal{G}$ can be transformed into $\mathcal{G}'$ 
	by a sequence of slide moves. 
\end{proposition}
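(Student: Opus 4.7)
The plan is to prove the two directions separately.

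For the ``if'' direction, I would verify that a single slide move $\mathcal{G}\rightsquigarrow\mathcal{G}'$, taking edge $e$ along $f$ with $d_e\mid d_f$ and $\tau(e)=o(f)$, preserves the associated poset together with its data. Since labels are unchanged, the vertex and edge sets of $\mathcal{G}_k$ and $\mathcal{G}'_k$ coincide for every $k\in\mathbb{N}$. If $k\nmid d_e$, then the subgraphs are literally equal. If $k\mid d_e$, then $k\mid d_f$, so $f\in\mathcal{G}_k$; in $\mathcal{G}_k\setminus\{e\}$, both $\tau(e)=o(f)$ and $\tau(e')=\tau(f)$ already lie in a common connected component via $f$, so replacing $e$ by $e'$ preserves connected components and hence first Betti numbers. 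Since slides preserve the reduced condition, this direction is complete.

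For the ``only if'' direction, I would fix a data-preserving isomorphism $\Phi\colon\mathsf{P}_\mathcal{G}\to\mathsf{P}_{\mathcal{G}'}$ and transform $\mathcal{G}$ into $\mathcal{G}'$ by slides, working from the top of the poset downward. The first observation is that maximal elements of $\mathsf{P}_\mathcal{G}$ at level $\ell$ correspond bijectively to vertices of $\mathcal{G}$ of label $\ell$: the reduced hypothesis forces every non-loop edge $e$ incident to $v$ to satisfy $d_e<d_v$, so the component of $v$ in $\mathcal{G}_{d_v}$ is just $v$ together with the label-$d_v$ loops at $v$, the number of which is recorded by the tag. Hence $\Phi$ canonically identifies the vertex sets together with their top-level loops.

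The main step is descending induction through the levels of the poset. Suppose every poset element strictly above level $k$ has been matched between $\mathcal{G}$ and $\mathcal{G}'$ by a sequence of slide moves, so that the subgraphs consisting of edges with labels strictly greater than $k$ have been brought into the configuration prescribed by $\Phi$. Each level-$k$ element $C$ is then assembled from its (matched) higher-level sub-components, glued by label-$k$ edges, the number of which is determined by $\beta_1(C)$ together with the Betti numbers of the sub-components. The core technical claim is that any two configurations of label-$k$ gluing edges realising the same $C$ are related by slide moves of $\mathcal{G}$. Since a label-$k$ edge may be slid along any edge whose label is divisible by $k$, and each higher-level sub-component of $C$ is internally connected by such edges, either endpoint of a label-$k$ edge may be re-attached to any vertex in any prescribed higher-level sub-component. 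A spanning-tree argument on the contracted multigraph, whose nodes are the higher-level sub-components of $C$ and whose edges are the label-$k$ edges of $C$, then reduces the claim to the elementary fact that any two connected multigraphs on a fixed vertex set with equal first Betti number differ by edge-endpoint slides.

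The main obstacle is this combinatorial rearrangement claim and its careful interaction with the inductive bookkeeping. Crucially, sliding a label-$k$ edge affects only those subgraphs $\mathcal{G}_{k'}$ with $k'\mid k$, so it leaves the already-reconstructed higher-level structure untouched; this is what keeps the induction consistent. Once the minimum element of the poset has been processed, $\mathcal{G}$ has been transformed, via the identification induced by $\Phi$, into $\mathcal{G}'$.
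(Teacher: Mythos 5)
Your proof is correct and follows the same overall strategy as the paper's: maximal elements of the poset recover the vertex set (using the reduced hypothesis), a descending induction on levels matches the edges of each label in turn, and at level $k$ one contracts the already-matched higher-label components and normalises the label-$k$ edges by sliding everything onto a chosen basepoint (the paper's ``octopus'' normal form is exactly your spanning-tree fact about connected multigraphs on a fixed vertex set with equal first Betti number). The one place you genuinely diverge is the edge count: the paper devotes a separate step to proving, via M\"obius inversion, that corresponding components have the same number of edges of \emph{every} label $l$, whereas you deduce the number of label-$k$ gluing edges inside the induction from an Euler-characteristic computation ($\#\{\text{label-}k\text{ edges}\} = \beta_1(C) - \sum_j \beta_1(D_j) + s - 1$, with the $D_j$ and their Betti numbers already pinned down by the inductive hypothesis and $\beta_1(C)$ supplied by the tag). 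This is a legitimate simplification, since the count is only ever needed for the current level at a moment when the higher-label subgraphs have been made literally equal; the M\"obius computation buys a cleaner standalone statement but is not logically necessary. Your remark that label-$k$ slides only disturb $\mathcal{G}_{k'}$ for $k' \mid k$ is the correct reason the induction is consistent, and matches the paper's bookkeeping.
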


\begin{proof} 
	The definitions have been framed 
	so that the ``if'' direction is clear, 
	so we concentrate on the converse. 
	Assume that we have a data-preserving isomorphism of posets 
	$\mathsf{P}_\mathcal{G}\cong\mathsf{P}_{\mathcal{G}'}$ 
	carrying $a_{i, k}$ to $a_{i, k}'$. 
	Our proof that this isomorphism 
	can be realised by a sequence of slide moves 
	is broken into steps 
	with the following intermediate conclusions. 
	\begin{enumerate}
		\item 
		There is a  bijection $v \mapsto v'$ 
		between the vertex sets of $\mathcal{G}$ and ${\mathcal{G}'}$
		that takes the vertices of $\mathcal{G}_k$ to those of $\mathcal{G}'_k$ for each level $k$.
		\item 
		This bijection restricts to a bijection 
		between the vertex sets of each pair of corresponding components 
		$a_{i,k} \subseteq \mathcal{G}_k,\ a_{i,k}'\subseteq \mathcal{G}_k'$,
		and $a_{i,k}$ has the same number of edges as $a_{i,k}'$.
		\item 
		For each level $k$, the number of edges 
		in $\mathcal{G}_k$ and $\mathcal{G}'_k$ is equal.
		\item 
		For each $l\in \N$, the components $a_{i,k}$ and $a_{i,k}'$
		have the same number of edges labelled $l$.
		\item 
		Proceeding by top-down induction on $\mathsf{P}_\mathcal{G}$:   
		for each $a_{i,k}$, assuming that the bijection in (1) 
		extends to a label-preserving isomorphism 
		from the subgraph of $\mathcal{G}$ formed by edges with label greater than $k$
		to the corresponding subgraph of $\mathcal{G}'$, 
		we argue that 
		a sequence of slide moves on $\mathcal{G}$ 
		involving only edges in $a_{i,k}$ labelled $k$
		transforms $a_{i,k}$ into $a_{i,k}'$. 
		At the end of the induction, we have the desired transformation
		of $\mathcal{G}=a_{1,1}$ into $\mathcal{G}'=a_{1,1}'$.
	\end{enumerate}   
	\smallskip
	
	\textbf{Step 1.}  For any reduced numbered graph $\mathcal{H}$ with at least two vertices, 
	$\mathcal{H}_k$ (which is itself a reduced graph)  is disconnected for some $k$. It follows that 
	the maximal elements of $\mathsf{P}_\mathcal{G}$ correspond to vertices of $\mathcal{G}$ (some
	of the maximal elements may correspond to one-vertex subgraphs of $\mathcal{H}$ with loops, others
	will simply be vertices). Thus an isomorphism of posets $\mathsf{P}_\mathcal{G}\cong\mathsf{P}_{\mathcal{G}'}$
	(without reference to level or tags) gives a bijection of vertex sets $v\leftrightarrow v'$, and (1) follows from the
	assumption that the isomorphism is level-preserving. 
	
	\textbf{Step 2.}  
	The number of maximal elements $\succeq a_{i, k}$ in $\mathsf{P}_\mathcal{G}$ is the
	number of vertices in the subgraph $a_{i,k}\subseteq\mathcal{G}$, and similarly for $a_{i,k}'$,
	so the isomorphism $\mathsf{P}_\mathcal{G}\cong\mathsf{P}_{\mathcal{G}'}$ tells that the connected subgraphs
	$a_{i, k}$ and $a_{i, k}'$ have the same number of 
	vertices.  As the isomorphism  is data-preserving, the tags on $a_{i, k}$ and $a_{i, k}'$ are the same, and 
	these record the  first Betti numbers of $a_{i, k}$ and $a_{i, k}'$ . This proves (2). 
	
	\textbf{Step 3.} The number of edges in $\mathcal{G}_k$ is the sum of the number of edges in 
	the components $a_{i,k}\subseteq\mathcal{G}_k$, so (3) is an immediate consequence of (2).
	
	\textbf{Step 4.}
	Regarding $a_{i,k}\subseteq \mathcal{G}$ 
	as a numbered graph in its own right, 
	we have a data-preserving inclusion of posets 
	$\mathsf{P}_{a_{i,k}} \hookrightarrow \mathsf{P}_\mathcal{G}$
	whose image is spanned by the elements comparable with $a_{i,k}$.
	Thus, to prove  that $a_{i,k}$ and $a_{i,k}'$
	have the same number of edges with label $l$
	it is enough to prove the following:
	for numbered graphs $\mathcal{H}$ and $\mathcal{H}'$, if there is a data-preserving isomorphism 
	$\mathsf{P}_\mathcal{H}\cong \mathsf{P}_{\mathcal{H}'}$, then
	for each $l\in\N$ the graphs $\mathcal{H}$ and $\mathcal{H}'$
	have the same number of edges labelled $l$.
	We shall use M\"{o}bius inversion to count these edges.
	
	We require some additional notation. 
	Let  $e(\mathcal{H})$ be
	the number of edges of  $\mathcal{H}$
	and let $e_l(\mathcal{H})$ be
	the number of edges   labelled $l$.
	As before, $\mathcal{H}_m$ denotes
	the subgraph of $\mathcal{H}$
	formed by vertices and edges
	whose labels are divisible by $m$.
	Recall that the M\"{o}bius function $\mu(n)$  
	equals $0$ if $n$ is divisible by a square
	and $(-1)^{\# \text{prime factors of }n}$ otherwise.
	We will show that
	\begin{equation*}
		e_l(\mathcal{H}) = \sum_{m \st l|m} e(\mathcal{H}_m) \mu(m/l).
	\end{equation*}
	In the light of (3), this  will complete the proof.
	
	Observing that $e(\mathcal{H}_m) = \sum_{m|n} e_n(\mathcal{H}_m)$ and that
	$e_n(\mathcal{H}) = e_n(\mathcal{H}_m)$ if $m|n$,  we have  
	\begin{align*}
		\sum_{m \st l|m} e(\mathcal{H}_m) \mu(\frac{m}{l}) &
		= \sum_{m \st l|m} \quad \sum_{n \st m|n} e_n(\mathcal{H}_m) \mu(m/l)\\&
		= \sum_{m \st l|m} \quad \sum_{n \st m|n} e_n(\mathcal{H}) \mu(m/l)\\&
		= \sum_{n \st l|n} \quad \sum_{m \st l|m|n} e_n(\mathcal{H}) \mu(m/l)\\&
		= \sum_{n \st l|n} e_n(\mathcal{H}) \sum_{d \st d|\frac{n}{l}} \mu(d)\\&
		= \sum_{n \st l|n} e_n(\mathcal{H}) \cdot \mathbf{1}\big(\frac{n}{l} = 1\big)\\&
		= e_l(\mathcal{H}).
	\end{align*}
	
	\textbf{Step 5.} At this stage in the proof, we can assume that $a_{i, k}$ and $a_{i, k}'$ 
	are numbered graphs on the same set of vertices, %with the same levels---%
	with  the same set of edges carrying  a label greater than $k$
	and with the same number of edges of label $k$. We want to perform  slide moves
	involving only edges labelled $k$ to transform $a_{i,k}\subseteq \mathcal{G}_k$ into $a_{i,k}'\subseteq \mathcal{G}_k'$,
	leaving the remainder of $\mathcal{G}$ unchanged.
	If we can do this, then we can inductively transform $\mathcal{G}$ into $\mathcal{G}'$
	by slide moves, arguing with an induction that assumes we have already adjusted $\mathcal{G}$
	so that $\mathcal{G}$ into $\mathcal{G}'$ have the same set of edges numbered with higher labels.
	
	For the base step of the induction, no slide moves are required: each
	maximal element $a_{i,k}\in \mathsf{P}_\mathcal{G}$ is a component with just one vertex and it  may 
	have some loops whose label is $k$, the level of the vertex; 
	the number of these loops is the first Betti number, which is recorded as the tag on $a_{i,k}$
	and hence is the same for corresponding maximal element $a_{i,k}'$ of $ \mathsf{P}_{\mathcal{G}'}$.  
	
	For the inductive step, we first reduce to the case where
	\emph{all} edges of $a_{i, k}$ and $a_{i, k}'$ are labelled $k$. We do this by observing that
	within each connected component $C$ of the subgraph of $a_{i,k}$
	that would be obtained  by deleting all edges labelled $k$,
	we can freely slide the endpoints of each edge labelled $k$.
	Moreover, by induction, each component $C$ in $a_{i,k}$  is identified with an identical component $C'$ in $a_{i,k}'$.
	So we can choose a vertex $v_c\in C$ and assume that whenever an edge of $a_{i,k}$ or $a_{i,k}'$
	labelled $k$ has an endpoint in $C$, that endpoint is $v_C$. We can then forget the remainder of $C$ (it will
	remain inertly attached to $v_C$ while we perform slide moves on the connected subgraph where all edges are labelled $k$).
	
	Finally, we observe that a numbered graph in which all edges are labelled $k$ can be transformed by slide moves
	into a  simple  normal form, namely the {\em octopus graph} shown in figure \ref{fig:octopus}. 
	In more detail,  we fix a vertex $v_0$ and note that for
	every edge $e$, at least one of $o(e)$ and $t(e)$
	is connected to $v_0$ by a path which doesn't include the edge $e$;
	we  slide $e$ along the path until one of its endpoints is at $v_0$; proceeding one edge at a time, this
	operation transforms our numbered graph into a graph where  every edge has $v_0$
	as at least one of its endpoints and the vertices other than $v_0$ are leaves (vertices of valence 1). 
	To complete the proof, we  transform each of $a_{i, k}$ and $a_{i, k}'$ into an octopus in this way, 
	and note  that the octopus is determined by its  number of vertices and edges, which is the same as the
	number in $a_{i, k}$ and $a_{i, k}'$.
	\begin{figure}
		\begin{center}
			\begin{tikzpicture}
[line cap=round,
line join=round,
x=1cm,
y=1cm]

\def\lineWidth{1.5pt}
\def\circleSize{2pt}

\draw [line width=\lineWidth] (0,0)-- (-1.73,-1);
\draw [line width=\lineWidth] (0,0)-- (-1,-1.73);
\draw [line width=\lineWidth] (0,0)-- (0,-2);
\draw [line width=\lineWidth] (0,0)-- (+1,-1.73);
\draw [line width=\lineWidth] (0,0)-- (+1.73,-1);
\draw [line width=\lineWidth] (0,0) .. controls (-2,0) and (0,2) .. (0,0);
%				\draw [line width=2pt, color=black](0,0) .. controls (2,0) and (0,2) .. (0,0);
\draw (0.5,0) node {$v_0$};
\draw [line width=\lineWidth] (0,0) .. controls (-1.5,1.5) and (1.5,1.5) .. (0,0);
\draw [fill=black] (0,0) circle (\circleSize);
\draw [fill=black] (-1.73,-1) circle (\circleSize);
\draw [fill=black] (-1,-1.73) circle (\circleSize);
\draw [fill=black] (0,-2) circle (\circleSize);
\draw [fill=black] (+1,-1.73) circle (\circleSize);
\draw [fill=black] (+1.73,-1) circle (\circleSize);
\end{tikzpicture}
		\end{center}
		\caption{An octopus graph.}
		\label{fig:octopus}
	\end{figure}
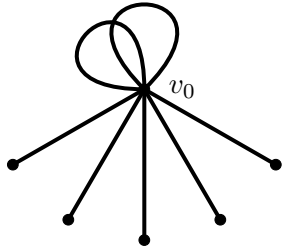
\end{proof}

We close this section with a reminder about why we are interested in numbered graphs and slide moves on them.

\begin{lemma}\label{l:sliding-the-same}
	There is a 1-1 correspondence  $\GG \leftrightarrow \mc{G}$
	between the finite graphs of finite groups described in Proposition \ref{which_graphs} and
	the set of finite numbered graphs. If $\GG_i$ corresponds to $\mc{G}_i$ for $i=1,2$, then
	there is a slide move transforming $\GG_1$ into $\GG_2$ if and only if there is a slide move
	transforming $\mc{G}_1$ into $\mc{G}_2$.
\end{lemma}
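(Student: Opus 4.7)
The plan is to handle the two assertions in turn, both by direct bookkeeping. For the bijection, given $\GG$ as in \Cref{which_graphs}, I simply record the numbering $d_v = |G_v|$ and $d_e = |G_e|$; condition (2) of that proposition guarantees $d_e \mid d_{\tau(e)}$, so $(\mathcal{G}, d)$ is a numbered graph in the sense of \Cref{def:numbered_graph}. In the reverse direction, given $(\mathcal{G}, d)$ I place $\Z/d_v$ at each vertex, $\Z/d_e$ at each edge, choose generators $t_v, t_e$, and define $\iota_e\colon \Z/d_e \to \Z/d_{\tau(e)}$ by $\iota_e(t_e) = t_{\tau(e)}^{\delta_e}$ with $\delta_e = d_{\tau(e)}/d_e$. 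This is a well-defined injection precisely because of the divisibility condition, and \Cref{which_graphs} certifies that the resulting $\GG$ is free-by-(finite cyclic) and of the required form. The two assignments are mutually inverse (up to the obvious relabelling of generators, which is pinned down once we require all inclusions to be \emph{positive} powers), giving the 1-1 correspondence.

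For the second assertion I would compare the two notions of slide move. In $\GG$, a slide of $e_1$ over $e_2$ is legal exactly when $\iota_{e_1}(G_{e_1}) \leq \iota_{\-e_2}(G_{e_2})$ inside the common vertex group $G_v = \Z/d_v$ (where $v = \tau(e_1) = o(e_2)$). Because $G_v$ is cyclic, its subgroup lattice is totally ordered by divisibility of orders, so the inclusion $\iota_{e_1}(G_{e_1}) \leq \iota_{\-e_2}(G_{e_2})$ is equivalent to $d_{e_1} \mid d_{e_2}$---precisely the condition governing slide moves on the numbered graph. Thus the two legality conditions are identical under the bijection.

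It remains to check that the slide on $\GG$ produces a graph of groups corresponding to the slid numbered graph. The underlying graph modification is the same in both settings, and the new edge group is $G_{e_1'} = G_{e_1}$, of order $d_{e_1}$, so the vertex and edge labels of $\GG'$ agree with those of $\mathcal{G}'$. The only thing that requires care is that $\GG'$ still satisfies condition (3) of \Cref{which_graphs}, i.e.\ that the new inclusion $\iota_{e_1'} = \iota_{e_2} \circ \iota_{\-e_2}^{-1} \circ \iota_{e_1}$ is a positive power map relative to some choice of generator. Setting $t_{e_1'} := t_{e_1}$ and unwinding the three factors, one computes
\[
\iota_{e_1'}(t_{e_1'}) = t_{\tau(e_2)}^{\,\delta_{e_2} \cdot \delta_{e_1} / \delta_{\-e_2}} = t_{\tau(e_2)}^{\,d_{\tau(e_2)}/d_{e_1}},
\]
which is exactly the positive power predicted by the numbered-graph data for $e_1'$. (Here we use $d_{e_1} \mid d_{e_2}$ to guarantee the exponent is an integer.) Conversely, any slide move on $\mathcal{G}$ is realised by this same formula in $\GG$, so the bijection intertwines slide moves in both directions.

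The only mild obstacle is the usual bookkeeping around choices of generators for the local groups; but since the positivity condition in \Cref{which_graphs}(3) pins these down up to simultaneous inversion in each cyclic group, no genuine ambiguity arises and the verification is a routine computation.
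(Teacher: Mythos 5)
Your proof is correct and takes essentially the same route as the paper: record the orders of the local groups to get the numbered graph, reconstruct the graph of cyclic groups from the numbers via the preferred generators and positive-power inclusions, and check that slide moves correspond. The paper simply asserts that the correspondence "obviously" respects slide moves, whereas you verify the exponent computation $\iota_{e_1'}(t_{e_1})=t_{\tau(e_2)}^{\,d_{\tau(e_2)}/d_{e_1}}$ explicitly; this is a welcome extra detail, not a different argument.
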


\begin{proof}
	Given a graph $\GG$ of finite cyclic groups, the corresponding numbered graph is obtained by replacing the
	local groups with their order. One can recover  $\GG$ from the numbered graph by taking edge and vertex groups to be
	cyclic of the prescribed order, with preferred generators $t_e$ and $t_v$, defining $\iota_e(t_e) = t_{\t(e)}^{\delta_e}$
	where $\delta_e$ is the label on the vertex $t(e)$ divided by the label on $e$. It is obvious from the definitions that
	this correspondence respects slide moves.
\end{proof}

\subsection{Finite subgroups, centralisers, and $\tsc(G)$}
\label{sec:torsion}

In this section, we translate the purely combinatorial arguments 
of \Cref{s:poset-slides} into group theory. 
%Now, having understood that we can distinguish
%which numbered graphs are equivalent via slide moves
%by their associated posets,
%we need to find an analogue
%on the group-theory side.

\begin{definition}
[The {\bf{f}}inite {\bf{s}}ubgroups {\bf{c}}onjugacy poset]
\label{def:torsion-subgroups-conjugacy-poset}
	Let $G$ be a group. 
	The elements of the 
	\emph{finite subgroups conjugacy poset} $\tsc(G)$ 
	are the conjugacy classes $[S]$ of finite subgroups $S \leq G$, 
	with the ordering $[S_1] \preceq [S_2]$ 
	if a conjugate of $S_1$ is contained in $S_2$. 
	
	Each element of $\tsc(G)$ is assigned two pieces 
	of additional numerical data: 
	its {\em level}, which is the cardinality $|S|$,
	and its {\em tag}, 
	which is the first Betti number of the centraliser $C_G(T)$. 
	By definition, a {\em data-preserving isomorphism} 
	$\tsc(G_1)\to \tsc(G_2)$ 
	is a poset isomorphism that preserves this numerical data. 
\end{definition}

\begin{proposition}
\label{associated_poset_is_torsion_subgroups}
	Let $G$ be a finitely generated free-by-(finite cyclic) group, 
	let $\GG$ be a reduced graph of finite groups 
	with $G\cong \pi_1\GG$, 
	and let $\mc{G}$ be the numbered graph that encodes $\GG$ 
	(as in \Cref{l:sliding-the-same}).  
	
	Then, there is a data-preserving isomorphism 
	$\Psi: \tsc(G)\to \mathsf{P}_\mathcal{G}$. 
\end{proposition}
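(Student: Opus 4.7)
My plan is to send a conjugacy class $[S]\in\tsc(G)$, with $|S|=k$, to the connected component of $\mathcal{G}_k$ containing any vertex $v$ for which a conjugate of $S$ lies in $G_v$. I will then verify in turn that $\Psi$ is well-defined, bijective, order-preserving, level-preserving, and tag-preserving.

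By \Cref{which_graphs} the local groups of $\GG$ are finite cyclic, so standard Bass--Serre theory ensures that every finite subgroup of $G$ is conjugate into some $G_v$, hence is cyclic. If $S\le G_v$ has order $k$, then $S=\langle t_v^{d_v/k}\rangle$ is the unique such subgroup of $G_v$, and in particular $k\mid d_v$. For each edge $e$ of the component $a_{i,k}$ the relation $\iota_e(t_e^{d_e/k})=t_{\tau(e)}^{d_{\tau(e)}/k}$ directly identifies the canonical order-$k$ subgroups at the two endpoints of $e$; iterating along a path in $a_{i,k}$ produces a $G$-conjugation between any two canonical order-$k$ subgroups in the same component, so $\Psi$ is well-defined. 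For injectivity, consider the Bass--Serre tree $T$: the subtree fixed pointwise by $S$ projects into the single component of $\mathcal{G}_k$ containing $v$, so if the canonical subgroups at vertices in distinct components of $\mathcal{G}_k$ were $G$-conjugate, their fixed subtrees would be $G$-translates, contradicting the fact that they project to distinct components. Surjectivity is immediate, and level-preservation is built into the construction.

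Order-preservation is equally direct. If $[S_1]\preceq[S_2]$ with $k_i=|S_i|$, then after conjugation $S_1\le S_2\le G_v$; inside the cyclic group $G_v$ we have $S_1=\langle t_v^{d_v/k_1}\rangle$ and $S_2=\langle t_v^{d_v/k_2}\rangle$, so $k_1\mid k_2$ and the vertex $v$ witnesses that the component $\Psi([S_1])=a_{i,k_1}$ contains $\Psi([S_2])=a_{j,k_2}$. The reverse implication is an immediate consequence of the definition of the ordering on $\mathsf{P}_\mathcal{G}$.

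The principal task is tag-preservation. Fix $S=\langle t_v^{d_v/k}\rangle$ and let $T_S\subseteq T$ be the subtree fixed pointwise by $S$; let $H$ denote its setwise stabiliser in $G$. By Bass--Serre theory, $H$ is the fundamental group of the induced graph of groups on $T_S/H$, and since a vertex or edge of $T$ lies in $T_S$ if and only if its stabiliser contains a conjugate of $S$, i.e.\ if and only if its label is divisible by $k$, this quotient graph of groups is precisely $a_{i,k}$ with the local groups inherited from $\GG$. I claim $C_G(S)=H$. The inclusion $C_G(S)\le H$ is automatic since any element commuting with $S$ must preserve its fixed-point set. For the reverse inclusion it suffices to show that $S$ is central in $H$: each vertex group appearing in $H$ is cyclic and contains $S$, so commutes with $S$; and each Bass--Serre edge letter $y_e$ for $e\in a_{i,k}$ conjugates the canonical order-$k$ subgroup of $G_{\tau(e)}$ onto that of $G_{o(e)}$ sending preferred generator to preferred generator, which acts trivially on $S$ once these canonical subgroups are identified with $S$. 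Here the positivity of the edge inclusions provided by \Cref{which_graphs} is essential---no sign flips appear, which contrasts with the general GBS situation such as \Cref{e:klein}. Finally, the quotient map $C_G(S)\twoheadrightarrow \pi_1|a_{i,k}|\cong F_{\beta_1(a_{i,k})}$ that kills each local group has kernel generated by torsion elements, so it induces an isomorphism on rational abelianisations; hence $\beta_1(C_G(S))=\beta_1(a_{i,k})$, which is precisely the tag of $a_{i,k}$.
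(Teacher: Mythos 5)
Your map $\Psi$ is the one the paper uses, and most of your verifications track its proof closely: uniqueness of the order-$k$ subgroup of a cyclic local group, connectedness of $\Fix(S)$ and its image being a whole component of $\mathcal{G}_k$, and the identification $N_G(S)=C_G(S)$ (which you obtain by exhibiting $S$ as central in the setwise stabiliser of the fixed subtree via the explicit presentation, where the paper simply notes that in a (torsion-free)-by-abelian group the normaliser of a finite subgroup equals its centraliser). The well-definedness, bijectivity and order arguments are fine.

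The one step that is not sound as written is the identification of the quotient graph of groups with $a_{i,k}$ in the tag computation, which you yourself call the principal task. You justify it by asserting that a cell of $T$ lies in $T_S$ if and only if its stabiliser contains a conjugate of $S$, if and only if its label is divisible by $k$. Neither equivalence is correct: a cell lies in $T_S=\Fix(S)$ iff its stabiliser contains $S$ itself (a cell whose stabiliser contains only a conjugate $S^g$ lies in the translate $g^{-1}T_S$), and a cell whose label is divisible by $k$ has stabiliser containing \emph{some} order-$k$ subgroup, which may lie in a different conjugacy class and hence over a different component $a_{j,k}$. More importantly, the equality $\beta_1(H)=\beta_1(a_{i,k})$ requires that the natural map $T_S/H \to G\backslash T$ be \emph{injective} with image $a_{i,k}$; without injectivity, $T_S/H$ could be a graph mapping non-injectively onto $a_{i,k}$, and the first Betti numbers could differ. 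The missing ingredient, which the paper proves explicitly, is that elements of $G\smallsetminus N_G(S)$ move $\Fix(S)$ entirely off itself: if $v$ and $gv$ both lie in $\Fix(S)$, then $S$ and $S^g$ are order-$k$ subgroups of the cyclic stabiliser of $v$, so $S=S^g$ and $g\in N_G(S)=H$. This is a short fix using exactly the uniqueness argument you deploy elsewhere, but as written the step is a gap.
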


\begin{proof}
	Consider the action of $G=\pi_1\GG$ on
	the universal covering $T$ of $\GG$, which is the Bass-Serre tree of the splitting.
	The fixed-point set ${\rm{Fix}}(S)\subseteq T$ of each finite subgroup $S<G$
	is connected and non-empty, and its image in $\mc{G}=G\backslash T$, which we call  $X_S$, is the same as the image of
	${\rm{Fix}}(S^g)$ for every conjugate $S^g=g^{-1}Sg$. All of the edges and vertices in $X_S$
	have a copy of $S$ in the local group, and hence the numbers labelling the edges and vertices of $X_S\subseteq\mc{G}$
	are all divisible by $k=|S|$; in other words $X_S\subseteq\mc{G}_k$. In fact,
	$X_S$ is a connected component of $\mc{G}_k$: for each vertex $v\in X_S$ and each
	edge $e$ with $\tau(e)=v$, if $e$ is in $\mc{G}_k$ then $k$ divides
	the label on $e$, which means that  $G_e<G_v$ contains a subgroup of cardinality $k$, and the only such subgroup in  
	$G_v$ (which is cyclic) is $S$, hence $S \leq G_e$ and $e\subset X_S$.  Thus $X_S=X_{S^g}$  is one of the elements
	of level $|S|$ in  $\mathsf{P}_\mathcal{G}$ and we can define $\Psi([S]) := X_S$.
	Note that $\Psi$ is injective, because each of the local groups  in $\GG$ contains a unique subgroup
	of any given cardinality, so $X_{S_1}$ and $X_{S_2}$ are disjoint if $S_1$ and $S_2$ are non-conjugate subgroups
	of the same size.
	
	To see that $\Psi$ is surjective, consider an element $a_{i,k}\in \mathsf{P}_\mathcal{G}$. By definition,
	$a_{i,k}$ is a subgraph of $\mc{G}_k$, so the local group at each vertex $v$ of $a_{i,k}$ contains a subgroup of
	cardinality $k$. If this subgroup is $S$, then $v\in X_S \cap a_{i,k}$. 
	But $a_{i,k}$ and $X_S$ are connected components of $\mc{G}_k$, so having non-trivial intersection forces $a_{i,k}=X_S$,
	in other words $a_{i,k} = \Psi([S])$. Thus $\Psi$ is a bijection.
	
	If $S_1 \leq S_2$, then ${\rm{Fix}}(S_1)\supseteq {\rm{Fix}}(S_2)$ 
	and hence $X_{S_1}\supseteq X_{S_2}$.
	Moreover, $|S_1|$ divides $|S_2|$. 
	Thus $S_1 \leq S_2$ implies $\Psi([S_1]) \preceq \Psi([S_2])$,
	so $\Psi$ is an isomorphism of posets.
	
	It remains to prove that $\Psi$ preserves tags, i.e. that the first Betti number of $C_G(S)$ is equal to the
	first Betti number of $X_S = \Psi([S])$. For this, the first thing to observe is that 
	while $N_G(S)$ preserves the fixed-point set ${\rm{Fix}}(S)$ in the Bass-Serre tree $T$,
	the elements of $G$ that are not in $N_G(S)$ move ${\rm{Fix}}(S)$  off itself entirely. 
	Indeed,  if $v$ and $gv$ are both vertices of ${\rm{Fix}}(S)$, then $S$ and $S^g$ are both subgroups of the
	stabiliser of $v$, which being cyclic has only one subgroup of size $|S|$, so $S=S^g$ and $g\in N_G(S)$.
	It follows that $N_G(S)\backslash{\rm{Fix}}(S)$ injects into $G\backslash T = \mathcal{G}$ with image $X_S$.
	Thus $N_G(S)$ is the fundamental group of a graph of finite groups with underlying graph $X_S$,
	and therefore $N_G(S)$ has  the same first Betti number as $X_S$. 
	The final thing to observe is  $N_G(S) = C_G(S)$, because in any (torsion-free)-by-abelian group,
	the normaliser of a finite subgroup  is the centraliser of that subgroup. 
\end{proof}

\begin{proof}
[Proof of Theorem \ref{thm:same_posets_implies_iso}]
	Let $G_1$ and $G_2$ be finitely generated free-by-(finite cyclic) groups. We must show that
	if there is a data-preserving isomorphism $\tsc(G_1)\cong\tsc(G_2)$ then  $G_1\cong G_2$. 
	To this end, we decompose each $G_i$ as a graph of finite cyclic groups $\GG_i$, as in Proposition \ref{which_graphs},
	and  encode $\GG_i$ as a numbered graph $\mc{G}_i$, as in Lemma \ref{l:sliding-the-same}.
	Proposition \ref{associated_poset_is_torsion_subgroups}
	tells us that there is a data-preserving isomorphism of the associated posets $\mathsf{P}_{\mathcal{G}_1} \cong 
	\mathsf{P}_{\mathcal{G}_2}$;  \Cref{iso_posets_sliding_equivalent} tells us that this isomorphism can be
	realised by a sequence of slide moves transforming  $\mathcal{G}_1$  into $\mathcal{G}_2$; and Lemma \ref{l:sliding-the-same}
	allows us to deduce that $\GG_1$ can be transformed into $\GG_2$ by slide moves. As slide moves on graphs of
	groups preserve fundamental groups, we conclude that $G_1\cong G_2$, as desired. 
\end{proof}  

\subsection{Completions detect finite subgroups and centralisers}
\label{sec:completions_detect}

Several of our previous proofs have exploited the fact that Bass-Serre theory allows one to capture the
behaviour of finite subgroups of virtually free groups $G$. In this section, we shall exploit the fact that no additional
torsion appears when one passes from $G$ to  its profinite completion $\wh{G}$; this is not true of finitely generated, residually finite groups in general. We continue to regard $G$ as a subgroup of $\wh{G}$ and write $\overline{H}$ for the closure of a
subgroup $H \leq \wh{G}$.

\begin{proposition}
\label{torsion}
	If a finitely generated group $G$ is virtually free, 
	then every finite subgroup of $\widehat G$ is conjugate to a finite subgroup of $G$,
	and if two finite subgroups of $G$ are conjugate in $\widehat G$, they are already conjugate in $G$.
\end{proposition}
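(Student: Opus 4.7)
\noindent\emph{Proof sketch.} The plan is to employ profinite Bass-Serre theory. As a finitely generated virtually free group, $G$ is the fundamental group of a finite graph of finite groups $\GG$ and acts on the Bass-Serre tree $T$ with finite vertex and edge stabilisers; the quotient $G\backslash T$ is $\GG$, and every finite subgroup of $G$ is conjugate into a vertex group of $\GG$.

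Following Ribes and Zalesskii, the action of $G$ on $T$ extends to an action of $\widehat G$ on the associated profinite tree $\widehat T$, with vertex and edge stabilisers equal to the closures in $\widehat G$ of those in $T$. Because these stabilisers are already finite subgroups of $G$, their closures equal themselves; the quotient $\widehat G\backslash\widehat T$ is again $\GG$, so $\widehat G$-orbits of vertices of $\widehat T$ correspond bijectively to $G$-orbits of vertices of $T$. The crucial input from this theory is the theorem that every finite subgroup of a profinite group acting on a profinite tree fixes some vertex.

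For the first assertion, I would apply this fixed-point theorem to a finite subgroup $F\leq \widehat G$: it fixes some $\tilde v\in\widehat T$, which by the orbit correspondence lies in the $\widehat G$-orbit of some $v\in T$. Choosing $g\in\widehat G$ with $\tilde v = g\cdot v$ gives $g^{-1}Fg \leq \mathrm{Stab}_{\widehat G}(v) = G_v$, which is a finite subgroup of $G$, as required.

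For the second assertion, suppose $g^{-1}S_1 g = S_2$ for finite subgroups $S_1, S_2 \leq G$ and some $g\in\widehat G$. I choose $v_i\in T$ fixed by $S_i$; then both $v_1$ and $g\cdot v_2$ lie in the profinite subtree $\mathrm{Fix}_{\widehat T}(S_1)$. The $\widehat G$-orbit of $v_2$ in $\widehat T$ meets $T$ precisely in $G\cdot v_2$ by the orbit correspondence. The strategy is to replace $g$ by some $g'\in\widehat G$ still satisfying $(g')^{-1}S_1 g' = S_2$ and in addition $g'\cdot v_2 \in T$: writing $g'\cdot v_2 = h\cdot v_2$ with $h\in G$ would then force $h^{-1}g' \in \mathrm{Stab}_{\widehat G}(v_2) = G_{v_2}\leq G$, so $g'\in G$ and we are done. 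The main obstacle is producing such a $g'$; this uses both the structure of $\mathrm{Fix}_{\widehat T}(S_1)$ as a profinite subtree and density arguments, and is essentially the content of conjugacy separability of finite subgroups in virtually free groups, which follows from the Bass-Serre structure together with the LERF property (Hall's theorem, extended to the virtually free case).
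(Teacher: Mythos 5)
Your treatment of the first assertion is essentially identical to the paper's: both pass to the profinite fundamental group of the finite graph of finite groups (Proposition 8.2.3 of \cite{ribes2017profinite}), invoke the fixed-point theorem for finite subgroups acting on profinite trees, and conclude that a finite subgroup of $\widehat{G}$ is conjugate into a vertex group, which lies in $G$. That part is correct and needs no further comment.

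The gap is in the second assertion. The paper does not attempt a direct argument here: it quotes the theorem of Chagas and Zalesskii \cite{chagas2015subgroup} that virtually free groups are subgroup conjugacy separable. You instead sketch a strategy (replace $g$ by some $g'$ with $(g')^{-1}S_1g'=S_2$ and $g'\cdot v_2\in T$), explicitly flag that ``the main obstacle is producing such a $g'$'', and then assert that this obstacle is overcome by ``the Bass-Serre structure together with the LERF property''. That last claim is where the proof fails to close: LERF says that finitely generated subgroups of $G$ are closed in the profinite topology, i.e.\ $\overline{H}\cap G=H$; it says nothing about whether two subgroups that are conjugate in $\widehat{G}$ are conjugate in $G$. (Separability of a subgroup and separability of its conjugacy class are genuinely different properties, and the implication you need is the content of a nontrivial theorem, not a formal consequence of Hall's theorem.) The honest options are either to cite subgroup conjugacy separability for virtually free groups as the paper does, or to actually carry out the profinite Bass--Serre argument for the element $g'$ --- e.g.\ by analysing $\mathrm{Fix}_{\widehat{T}}(S_1)$ and using that a vertex stabiliser $G_{v}$, being finite cyclic here, contains a unique subgroup of each order --- neither of which your sketch does. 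As written, the second half of the proposition is asserted rather than proved.
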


\begin{proof}
	Let $\mathcal{G}$ be a finite graph of finite groups
	such that $G$ is its (discrete) fundamental group. Proposition 8.2.3 in
	\cite{ribes2017profinite} shows that
	$\widehat G$ is actually isomorphic 
	to the \emph{profinite} fundamental group of $\mathcal{G}$,
	which means that there is a profinite tree $\widehat{T}$ on which $\widehat{G}$ acts 
	with quotient $\mathcal{G}$.
	
	As in the discrete setting, every  finite subgroup $S$ of $\widehat G$
	will fix some vertex  of $\widehat{T}$, and therefore $S$ is conjugate in $\widehat G$
	to a subgroup of one of the vertex groups of $\mathcal{G}$, which all lie in $G$. 
	
	The assertion about non-conjugate finite subgroups of $G$ remaining non-conjugate in $\widehat G$
	is a special case of  conjugacy separability for subgroups of  virtually-free groups, which was
	proved by Chagas and Zalesskii   \cite{chagas2015subgroup}. 
\end{proof}

\begin{proposition}
\label{centralisers}
	Let $G$ be a finitely generated free-by-(finite cyclic) group and $S < G$ be a finite subgroup.
	Then $C_{\widehat{G}}(S) \cong \widehat{C_G(S)}$.
\end{proposition}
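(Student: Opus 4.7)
The plan is to repeat, in the profinite setting, the computation of $C_G(S)$ that appeared in the proof of \Cref{associated_poset_is_torsion_subgroups}. Decompose $G$ as $\pi_1\GG$ for a reduced graph $\GG$ of finite cyclic groups, as in \Cref{which_graphs}. By Proposition 8.2.3 of \cite{ribes2017profinite}, $\widehat G$ is the profinite fundamental group of $\GG$ and acts on a profinite tree $\widehat T$ with quotient $\mc{G}$; because the local groups of $\GG$ are already finite, the vertex and edge stabilisers of $\widehat T$ are honest conjugates of those local groups. By \Cref{torsion} we may assume, after replacing $S$ by a conjugate, that $S$ is contained in some vertex group of $\GG$, and hence that the fixed set $\Fix(S)\subseteq \widehat T$ is a non-empty profinite subtree whose image in $\mc{G}=\widehat G\backslash \widehat T$ is exactly the subgraph $X_S$ appearing in \Cref{associated_poset_is_torsion_subgroups}.

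Next I would establish the profinite analogue of the identity $N_G(S)=C_G(S)$. Since $G$ is free-by-(finite cyclic), $\widehat G$ fits into a short exact sequence $1\to \widehat F_N \to \widehat G \to \Z/m \to 1$, and $\widehat F_N$ is torsion-free (as the profinite completion of a free group). For any $g\in N_{\widehat G}(S)$ and $s\in S$ the commutator $[g,s]$ lies in $S$ (because $g$ normalises $S$) and maps trivially to $\Z/m$ (which is abelian), so $[g,s]\in S\cap\widehat F_N=1$. Thus $N_{\widehat G}(S)=C_{\widehat G}(S)$, just as in the discrete case.

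Now the same uniqueness argument used in the proof of \Cref{associated_poset_is_torsion_subgroups} transfers verbatim: every vertex stabiliser of $\widehat T$ is finite cyclic and therefore contains a unique subgroup of order $|S|$, so if $v$ and $gv$ both lie in $\Fix(S)$ then $S=S^{g^{-1}}$ and $g\in N_{\widehat G}(S)=C_{\widehat G}(S)$. Consequently the natural continuous map
\[
C_{\widehat G}(S)\backslash \Fix(S)\ \hookrightarrow\ \widehat G\backslash \widehat T\ =\ \mc{G}
\]
is injective with image exactly $X_S$. In other words, $C_{\widehat G}(S)$ acts on the profinite subtree $\Fix(S)$ with quotient $X_S$ and with vertex/edge stabilisers equal to the finite local groups of the induced graph-of-groups structure on $X_S$.

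Invoking Proposition 8.2.3 of \cite{ribes2017profinite} once more -- this time applied to the graph of finite groups with underlying graph $X_S$ and local groups inherited from $\GG$ -- one concludes that $C_{\widehat G}(S)$ is the profinite fundamental group of this graph of finite groups. In the proof of \Cref{associated_poset_is_torsion_subgroups} we observed that $C_G(S)=N_G(S)$ is the \emph{discrete} fundamental group of the very same graph of finite groups, so a final application of Proposition 8.2.3 of \cite{ribes2017profinite} identifies its profinite completion with its profinite fundamental group, giving $\widehat{C_G(S)}\cong C_{\widehat G}(S)$. The main obstacle is the transfer step: one must verify that the profinite Bass-Serre machinery is robust enough to guarantee that the setwise stabiliser in $\widehat G$ of a connected invariant profinite subtree of $\widehat T$ is indeed the profinite fundamental group of the induced graph of groups on the quotient. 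This is the content, in the finite-local-group case, of the structural results on profinite graphs of groups in \cite{ribes2017profinite}, and it is what makes the discrete calculation transport cleanly to $\widehat G$.
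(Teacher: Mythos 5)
Your route differs from the paper's. The paper does not redo any profinite Bass--Serre computation here: it quotes Theorem~2.6 of Ribes--Zalesskii \cite{ribes2014normalizers}, which states that for virtually free groups $N_{\wh G}(S)=\overline{N_G(S)}$, identifies normalisers with centralisers on both sides via the (torsion-free)-by-abelian structure (as you do), and then upgrades $\overline{C_G(S)}$ to $\wh{C_G(S)}$ using finite generation of centralisers plus LERF for virtually free groups. You instead try to rederive the normaliser computation from scratch by analysing the action of $\wh G$ on the profinite tree $\wh T$, which, if it worked, would bypass the LERF step since you would land directly on the profinite fundamental group of the graph of groups carried by $X_S$.

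The problem is the step you yourself flag as ``the main obstacle'': the claim that the setwise stabiliser in $\wh G$ of the invariant profinite subtree $\Fix(S)$ is the profinite fundamental group of the induced graph of groups on $X_S$. This is not an off-the-shelf consequence of Proposition~8.2.3 of \cite{ribes2017profinite}; that proposition only identifies $\wh G$ itself with the profinite fundamental group of $\GG$ and gives the standard tree $\wh T$. Profinite Bass--Serre theory notoriously lacks a general structure theorem of the form ``a profinite group acting on a profinite tree with finite quotient graph is the profinite fundamental group of the quotient graph of groups'', and closed subgroups of profinite fundamental groups need not inherit such decompositions. Concretely, your argument shows that $N_{\wh G}(S)$ acts on $\Fix(S)$ with quotient $X_S$ and with finite cyclic point stabilisers, but to conclude $N_{\wh G}(S)=\overline{N_G(S)}$ one still has to rule out extra elements: one needs that every point stabiliser $\mathrm{Stab}_{\wh G}(\wh v)\cap N_{\wh G}(S)$ for $\wh v\in\Fix(S)$ already lies in $\overline{N_G(S)}$, i.e.\ that the $\wh G$-conjugators realising these stabilisers can be chosen in $\overline{N_G(S)}$. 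Establishing this is essentially the proof of the Ribes--Zalesskii theorem, so as written your argument assumes the content of the result the paper cites. The fix is simply to cite Theorem~2.6 of \cite{ribes2014normalizers} for $N_{\wh G}(S)=\overline{N_G(S)}$ and then add the finite-generation-plus-LERF observation to pass from the closure $\overline{C_G(S)}$ to the completion $\wh{C_G(S)}$ -- a step your version also needs in some form once the transfer step is reduced to the closure statement.
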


\begin{proof}
	Ribes--Zalesskii in Theorem 2.6 of \cite{ribes2014normalizers}
	proved that for virtually-free groups 
	$N_{\widehat G}(S) = \overline{N_G(S)}$. 
	And since both $G$ and $\wh{G}$ are 
	(torsion-free)-by-abelian group, 
	$N_{\widehat G}(S) = C_{\widehat{G}}(S)$ and $N_G(S)=C_G(S)$. 
	
	In hyperbolic groups, the centralisers of finite subgroups 
	are always finitely generated, 
	and in virtually free groups all finitely generated subgroups 
	are closed in the profinite topology (the LERF property), 
	so $\overline{C_G(T)} \cong \widehat{C_G(T)}$. 
\end{proof}

\subsection{Proof of the main theorems} 
\label{sec:proof-of-main-thms}

We recall the statements of our main theorems, 
for the convenience of the reader. 

\thmfinitecyclic*

\begin{proof}
	With \Cref{thm:same_posets_implies_iso} in hand, 
	it suffices to show that if $\wh{G}_1\cong\wh{G}_2$ 
	then there is a data-preserving isomorphism of posets 
	$\tsc(G_1)\cong\tsc(G_2)$. 
	
	To this end, we fix an identification $\wh{G}:=\wh{G}_1=\wh{G}_2$ 
	and consider the canonical embeddings 
	$G_1\hookrightarrow\wh{G}$ and $G_2\hookrightarrow\wh{G}$. 
	\Cref{torsion} tells us that 
	for each conjugacy class of finite subgroups $S_1 \leq G_1$ 
	there is a unique conjugacy class 
	of finite subgroups $S_2 \leq G_2$ 
	such that $S_1$ is conjugate to $S_2$ in $\wh{G}$. 
	Thus we obtain an isomorphism $\tsc(G_1)\cong \tsc(G_2)$ 
	and we will done if we can argue 
	that the first Betti numbers 
	of $C_{G_1}(S_1)$ and $C_{G_2}(S_2)$ are the same. 
	
	As $S_1$ is conjugate to $S_2$ in $\wh{G}$, 
	they have isomorphic centralisers in $\wh{G}$, 
	so \Cref{centralisers} tells us that 
	$C_{G_1}(S_1)$ and $C_{G_2}(S_2)$ have 
	the same profinite completion. 
	It follows that $C_{G_1}(S_1)$ and $C_{G_2}(S_2)$ 
	have the same first Betti number, because 
	the first Betti number of a group $G$ is encoded in its finite quotients: it is the largest
	integer $b$ such that $G$ maps onto $(\Z/p)^b$ for every prime $p$. 
\end{proof}

\begin{lemma}
\label{l:l2centre}
	Let $\G=F_{N}\rtimes_{\phi}\Z$, 
	where $N\ge 2$ and $[\phi]\in{\rm{Out}}(F_N)$ has finite order. 
	Let $\Lambda$ be a finitely presented, residually finite group. 
	If $\wh{\G}\cong \wh{\Lambda}$, 
	then the centre of $\Lambda$ is infinite. 
\end{lemma}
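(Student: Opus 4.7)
The plan is to argue by contradiction via $\ell_2$-Betti numbers, exploiting that $\Gamma$ has a central $\Z$ while $G := \Gamma/Z(\Gamma)$ is virtually free with positive first $\ell_2$-Betti number. If $Z(\Lambda)$ were finite, then $\Lambda$ would behave $\ell_2$-like a dense subgroup of $\wh{G}$, contradicting the profinite invariance of $\beta_1^{(2)}$ applied to $\Lambda$ and $\Gamma$.

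First I would identify $\wh{\Lambda}$ with $\wh{\Gamma}$. An element of $Z(\Lambda)$ commutes with the dense subgroup $\Lambda \subseteq \wh{\Lambda}$, hence with all of $\wh{\Lambda}$, so $Z(\Lambda) = \Lambda \cap Z(\wh{\Lambda})$. By \Cref{centers_computed}, $Z(\wh{\Lambda}) = \overline{Z(\Gamma)} \cong \wh{\Z}$ and $\wh{\Lambda}/Z(\wh{\Lambda}) \cong \wh{G}$. Assume for contradiction that $K := Z(\Lambda)$ is finite. Then the composition $\Lambda \hookrightarrow \wh{\Lambda} \twoheadrightarrow \wh{G}$ has kernel $K$, so $\Lambda/K$ embeds densely in $\wh{G}$; moreover $\Lambda/K$ is finitely presented (adding finitely many relators killing $K$ to a presentation of $\Lambda$) and residually finite as a subgroup of the profinite group $\wh{G}$.

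Next I would carry out two $\ell_2$-Betti number computations. First, $\beta_1^{(2)}(\Gamma) = 0$ because $\Gamma$ surjects onto $\Z$ with finitely generated kernel $F_N$, or equivalently because $Z(\Gamma) \cong \Z$ is an infinite amenable normal subgroup. Second, \Cref{free-by-cyclic-with-centre} tells us $Z(\Gamma) = \<x_0 t^m\>$ with $m$ the order of $[\phi]$, so $F_N \cap Z(\Gamma) = 1$ and $F_N \cdot Z(\Gamma)$ has index $m$ in $\Gamma$; hence $F_N$ sits in $G$ as a subgroup of index $m$, giving $\beta_1^{(2)}(G) = (N-1)/m > 0$. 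The profinite invariance of $\beta_1^{(2)}$ recorded in the preliminaries now yields $\beta_1^{(2)}(\Lambda) = \beta_1^{(2)}(\Gamma) = 0$, while the density-comparison from the same reference, applied to $\Lambda/K \hookrightarrow \wh{G}$, gives $\beta_1^{(2)}(\Lambda/K) \geq \beta_1^{(2)}(G) > 0$.

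To extract the contradiction, I would use residual finiteness of $\Lambda$ to choose a finite-index normal subgroup $N \trianglelefteq \Lambda$ with $N \cap K = 1$. Then $N$ injects into $\Lambda/K$ as the finite-index subgroup $NK/K$, of index $[\Lambda : NK]$, and multiplicativity of $\beta_1^{(2)}$ under finite-index subgroups forces both $\beta_1^{(2)}(N) = [\Lambda : N] \cdot 0 = 0$ and $\beta_1^{(2)}(N) = [\Lambda : NK] \cdot \beta_1^{(2)}(\Lambda/K) > 0$, the desired contradiction. The main subtlety I anticipate is this final reconciliation between the $\ell_2$-Betti numbers of $\Lambda$ and $\Lambda/K$: invoking the naive formula for a finite normal kernel requires care, but the choice of a witness $N$ disjoint from $K$ sidesteps that issue cleanly; everything else is essentially bookkeeping after \Cref{centers_computed}.
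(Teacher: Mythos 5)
Your proof is correct and follows essentially the same route as the paper: reduce to a dense embedding of (a quotient of) $\Lambda$ into $\wh{\G/Z(\G)}$ via \Cref{centers_computed}, then contradict $\beta_1^{(2)}(\Lambda)=\beta_1^{(2)}(\G)=0$ with the positivity of $\beta_1^{(2)}$ for dense finitely presented subgroups of that completion. The only difference is that you carry along a possibly non-trivial finite centre $K$ and dispose of it with a finite-index subgroup avoiding $K$, whereas the paper notes that $Z(\wh{\G})\cong\wh{\Z}$ is torsion-free, so a finite $Z(\Lambda)=\Lambda\cap Z(\wh{\Lambda})$ is automatically trivial and $\Lambda$ itself embeds densely.
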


\begin{proof} 
	From \Cref{centers_computed} we know that 
	$Z(\wh{\G}) \cong\wh{\Z}$ is torsion-free, 
	so if $Z(\Lambda)$ were not infinite, 
	then $\Lambda$ would inject into $\wh{\G}/Z(\wh{\G})$ 
	as a dense subgroup. 
	\Cref{centers_computed} also tells us that  
	$\wh{\G}/Z(\wh{\G})$ is isomorphic 
	to the profinite completion of $\G/Z(\G)$. 
	And since contains $\wh{F}_N$ as a subgroup of finite index, 
	any dense finitely presented subgroup 
	must have positive first $\ell_2$ Betti number, 
	see Lemma 3.2 in \cite{bridson2021profinite}. 
	But $\beta_1^{(2)}(\Lambda)=0$, 
	because $\wh{\Lambda}\cong\wh{\G}$ implies 
	$\beta_1^{(2)}(\Lambda)=\beta_1^{(2)}(\G)$
	and $\beta_1^{(2)}(\G)=0$ 
	since $\G$ contains a finitely generated 
	infinite normal subgroup of infinite index \cite{gaboriau}.  
\end{proof}

\thmmain*

\begin{proof}
	Lemma \ref{l:l2centre} tells that $Z(\G_2)$ is non-trivial, 
	so $[\phi_2]$ has finite order, 
	by \Cref{free-by-cyclic-with-centre}. 
	We consider $\G_1$ and $\G_2$ as GBS groups $\G_i=\pi_1\GG_i$, 
	as in \Cref{prop:graph_of_Zs}, 
	and we decompose each of the 
	free-by-(finite cyclic groups) $G_i:=\G_i/Z(\G_i)$ 
	as a graph of finite cyclic groups $G_i=\pi_1\-{\GG}_i$ 
	as in \Cref{corollary:GmodZ}. 
	The decompositions $\-{\GG}_i$ are encoded 
	by numbered graphs $\mathcal{G}_i$, 
	which we studied in \Cref{s:poset-slides}. 
	
	From \Cref{centers_computed} we know that $\wh{\G}_1\cong\wh{\G}_2$ implies
	$\wh{G}_1 \cong \wh{G}_2$, so from \Cref{thm:free-by-finite-cyclic} we conclude that $G_1\cong G_2$. 
	In particular, there is a data-preserving poset isomorphism 
	$\tsc(G_1)\cong\tsc(G_2)$.
	\Cref{associated_poset_is_torsion_subgroups} translates this 
	into a data-preserving isomorphism 
	$\mathsf{P}_{\mathcal{G}_1}\cong\mathsf{P}_{\mathcal{G}_2}$ 
	between the posets associated 
	to the numbered graphs ${\mathcal{G}_i}$. 
	Then, from \Cref{iso_posets_sliding_equivalent} we know that 
	$\mathcal{G}_1$ can be transformed into $\mathcal{G}_2$ 
	by a sequence of slide moves,
	and from \Cref{l:sliding-the-same} we deduce that 
	$\-{\GG}_1$ can be transformed into $\-{\GG}_2$ 
	by a sequence of slide moves. 
	\Cref{l:lifting_iso} allows us to lift this 
	last sequence of slide moves 
	so as to transform $\GG_1$ into $\GG_2$, 
	from which we conclude $\G_1 \cong \G_2$, as claimed. 
\end{proof}

\begin{remark}
[On the profinite rigidity of GBS groups] 
\label{r:GBS}
	The only GBS groups that are residually finite are those that have an infinite cyclic normal subgroup (corresponding to the modular homomorphism having image in $\{-1,1\}$)
	and those that are solvable \cite{levitt2015generalized}. 
	The latter are the Baumslag-Solitar groups ${\rm{BS}}(1,n)$, which can be distinguished from each other by their finite abelian quotients. Moreover, the solvable examples are distinguished from the non-solvable ones by the fact that their profinite completions are solvable. 
	
	If $\G$ is a GBS with an infinite cyclic normal subgroup, then either this subgroup
	is central and $\G$ has the form $F_N\rtimes_\phi\Z$, or else there is a subgroup of index 2 that has this form.
	\Cref{l:l2centre} tells us 
	that the profinite completion of $\G$ 
	distinguishes between these two cases -- 
	so in the light of \Cref{t:main}, 
	we obtain the analogue of \Cref{t:1-relator}. 
	At this point, we would like to claim that 
	if $\G_1$ and $\G_2$ are residually finite GBS groups 
	and $\wh{\G}_1\cong\wh{\G}_2$, then $\G_1\cong\G_2$. 
	The remaining case is where both $\G_1$ and $\G_2$ have trivial
	centre but they have a common subgroup of index $2$ that has infinite centre. 
	This requires additional work. %MB
\end{remark}

\section{One-relator groups}
\label{s:1-relator}

\thmonerelator*

\begin{proof} 
	Baumslag and Taylor \cite{baumslag1967centre} 
	proved that 1-relator groups with non-trivial centre 
	are free-by-cyclic $F_N\rtimes\Z$, 
	so in the light of \Cref{t:main} 
	it suffices to argue that $Z(\G_2)\neq 1$. 
	\Cref{l:l2centre} assures us that this is the case. 
\end{proof}

The following result shows that 
both \Cref{t:1-relator} and \Cref{l:l2centre} would fail 
if we dropped the assumption that $\G_2$ is residually finite. This was already observed by Baumslag, Miller and Troeger \cite{BaumMillerTroeger}, but we include a short proof for the reader's convenience.

\begin{theorem} [\cite{BaumMillerTroeger}]
	For every 1-relator group $\G= \<A\mid r\>$, with $|A|\ge 2$ and $r\neq 1$, there exists a 1-relator group  $\G^\dagger = \< A\mid \tilde{r}\>$ that is not residually finite but admits an epimorphism $\pi: \G^\dagger \to \G$ such that
	\begin{enumerate}
		\item $\G^\dagger$ is not residually finite;
		\item $\ker\pi\neq 1$;
		\item $\wh{\pi}:\wh{\G^\dagger}\cong\wh{\G}$ is an isomorphism.
	\end{enumerate} 
	In particular, there is a 1-relator group that is not residually finite but has the same finite quotients as $\G$. 
\end{theorem}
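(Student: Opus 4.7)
The plan is to construct $\tilde r$ by perturbing $r$ with a commutator-type word, following the paradigm of Baumslag's parafree constructions. The guiding principle is that $\tilde r$ should encode a \emph{self-referential commutator identity} of the form $g = [g, h]$: such an identity forces $g = 1$ in any group whatsoever (by the short calculation $g = ghg^{-1}h^{-1} \Rightarrow h = hg^{-1} \Rightarrow g = 1$), and hence in every finite quotient of $\Gamma^\dagger$, while leaving room for $g$ itself to be nontrivial in $\Gamma^\dagger$ provided the identity is an emergent consequence visible only after passage to finite quotients rather than a direct one at the level of the ambient group. The construction of $\tilde r$ engineers precisely such an element.

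Concretely, one picks an auxiliary word $u \in F = F(A)$ and sets $\tilde r = r \cdot w(r, u)$, where $w(r, u)$ is a carefully chosen element of $\langle\langle r \rangle\rangle^F$ involving iterated commutators of $r$ with $u$ and the generators of $A$. The containment $w(r, u) \in \langle\langle r \rangle\rangle^F$ ensures that the identity map on $A$ descends to a surjection $\pi: \Gamma^\dagger \twoheadrightarrow \Gamma$, so $\pi$ is produced in tandem with $\tilde r$. For the profinite isomorphism (condition (3)) it suffices to show that every finite quotient of $\Gamma^\dagger$ factors through $\pi$. Let $\psi: \Gamma^\dagger \to Q$ be a homomorphism with $Q$ finite; the relation $\tilde r = 1$ becomes a commutator identity in $Q$ which, after the self-referential manipulation sketched above (applied iteratively if necessary), forces $\psi(r) = 1$, so $\psi$ factors through $\Gamma$.

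Condition (1), non-residual-finiteness of $\Gamma^\dagger$, then follows from (2) and (3) together with the injectivity of $\widehat\pi$. Indeed, for $x \in \ker\pi \setminus \{1\}$, the composite $\Gamma^\dagger \to \widehat{\Gamma^\dagger} \xrightarrow{\widehat\pi} \widehat\Gamma$ coincides with the composition of $\pi$ with the canonical map $\Gamma \to \widehat\Gamma$, which kills $x$; since $\widehat\pi$ is injective, $x$ must lie in the kernel of $\Gamma^\dagger \to \widehat{\Gamma^\dagger}$, witnessing the failure of residual finiteness.

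The principal obstacle is condition (2), non-triviality of $\ker\pi$: one must verify that $\tilde r$ does \emph{not} normally generate $r$ in $F$, so that the image of $r$ in $\Gamma^\dagger$ is nontrivial. Here the Magnus--Karrass--Solitar theory of one-relator groups is the natural tool --- specifically Magnus's Freiheitssatz and the HNN decomposition of $\Gamma^\dagger$ obtained by passing to a generator of exponent sum zero in $\tilde r$. These allow one to identify a concrete element of $\langle\langle r \rangle\rangle^F$ whose image in $\Gamma^\dagger$ is nontrivial --- precisely the ``ghost'' whose profinitely invisible existence was engineered at the outset. The balance between the two main steps is genuinely delicate: $w(r, u)$ must be rich enough to enlarge the normal closure strictly, yet finely tuned so that the self-referential argument continues to force triviality of $r$ in every finite quotient.
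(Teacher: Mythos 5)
Your high-level plan matches the shape of the paper's argument (perturb $r$ by an element of $\langle\langle r\rangle\rangle$, kill $r$ in every finite quotient, use Magnus theory to keep $r$ alive in $\G^\dagger$), and your deduction of (1) from (2) and (3) via the injectivity of $\wh{\pi}$ is correct. But there are two genuine gaps. First, you never produce $\tilde{r}$, and the mechanism you propose cannot work as stated: an identity of the form $g=[g,h]$ forces $g=1$ in \emph{every} group, including $\G^\dagger$ itself, which would destroy condition (2). You acknowledge that the identity must be ``emergent'' only in finite quotients, but you give no mechanism for that emergence, and ``applied iteratively if necessary'' is not an argument. The actual mechanism (Higman's, used in the paper) is arithmetic, not formal: take $\tilde{r}=[t^{-1}rt,\,r]\,r^{-1}$, so that $t^{-1}rt$ conjugates $r$ to $r^2$. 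In a finite quotient where $r$ has order $d$, the conjugator $(t^{-1}rt)^d=t^{-1}r^dt$ is trivial, yet conjugation by it sends $r$ to $r^{2^d}$; hence $d\mid 2^d-1$, which fails for $d>1$ by comparing smallest prime factors. Without this (or some substitute) number-theoretic step, condition (3) is unproved.

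Second, your treatment of (2) is only a gesture: you say the Freiheitssatz and the Magnus--Karrass--Solitar HNN decomposition ``allow one to identify a concrete element'' of $\langle\langle r\rangle\rangle$ surviving in $\G^\dagger$, but you do not identify one, and doing so by hand for a general $r$ is exactly the delicate part. The paper sidesteps this entirely with a cleaner argument: if $r$ were trivial in $\G^\dagger$, then $\pi$ would be an isomorphism induced by the identity on $A$, so $r$ and $\tilde{r}$ would have the same normal closure in $F(A)$; Magnus's theorem (elements of a free group with equal normal closures are conjugate up to inversion) then gives a contradiction, since $\tilde{r}$ is not conjugate to $r^{\pm1}$. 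You should either adopt that argument or actually carry out the HNN analysis; as written, both of the substantive claims (3) and (2) rest on unexecuted steps.
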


\begin{proof} Assume that $r$ is cyclically reduced.
	Fix $t\in A$ so that $r$ is not a power of $t$, let $\tilde{r} = (t^{-1}rt)^{-1}r(t^{-1}rt)r^{-2}=[t^{-1}rt,\, r]r^{-1}$
	and let $\pi:  \G^\dagger \to \G$ be the epimorphism induced by the identity map on $A$.
	
	The following variation on an argument of Baumslag \cite{baumslag1969non} 
	originates in the work of Higman \cite{higman1951finitely}. 
	In any finite quotient $Q$ of $\G^\dagger$, if the image of $r$ has order $d$ then
	in $Q$ we have $r=(t^{-1}r^dt)^{-1}r(t^{-1}r^dt)=r^{2^d}$, hence $d$ divides $2^d-1$, which is impossible if $d>1$ because the smallest prime factor of $d$
	is less than the smallest prime factor of $2^d-1$. Thus $r$ must have trivial image in any finite quotient of $\G^\dagger$, and
	therefore $\pi:\G^\dagger \to \G^\dagger/\<\! \<r\>\!\>= \G$ induces an isomorphism of profinite completions.
	
	To prove (1) and (2), we must argue that  $r\neq 1$ in $\G^\dagger$. If $r$ were equal to $1\in \G^\dagger$, then
	$\pi$ would be an isomorphism, induced by the identity map on $A$, and hence the normal closures of $r$ and $\tilde{r}$ in the free group $F(A)$ would be equal. 
	But a classical result of Magnus \cite{magnus} assures that this is not true, because $\tilde{r}$ is not conjugate to $r^{\pm 1}$ in $F(A)$.
\end{proof}

\section{Isomorphisms of free-by-cyclic groups}
\label{s:last}
\label{s:isomorphism}

In the introduction to this paper, we promised to give 
examples illustrating various ways in which non-obvious isomorphisms 
between free-by-cyclic groups with centre can arise. 
In this section, we will fulfil these promises as follows:

\smallskip
\begin{itemize}
	\item 
		We explain why, for each $M\ge 2$, 
		there is an automorphism $\phi\in\autN$ such that 
		$F_N\rtimes_{\phi}\Z\cong F_M\times\Z$ 
		if and only if $M-1$ divides $N-1$. 
		In each	case, 
		the conjugacy class of $[\phi]\in \outN$ is unique. 
	\item
		We describe elements $\Phi\in\outN$ of finite order $m$ 
		with some powers $\Phi^j$, where $j$ is coprime to $m$, 
		so that $\Phi$ is not conjugate to $\Phi^j$ in $\outN$ 
		even though $F_N\rtimes_\phi\Z\cong F_N\rtimes_{\phi^j}\Z$ 
		where $\Phi = [\phi]$.  
	\item
		We also describe elements $\Phi,\Psi\in\outN$ 
		of finite order so that 
		$\<\Phi\>$ is not conjugate to $\<\Psi\>$ in $\outN$ 
		but $F_N\rtimes_\phi\Z\cong F_N\rtimes_{\psi}\Z$ 
		for $\Phi = [\phi]$ and $\Psi = [\psi]$.
		%(we also explain why the order of $[\phi],[\psi]\in\outN$ must be the same). 
\end{itemize}
\smallskip

Note that in each case 
the isomorphisms between the semidirect products 
cannot preserve the free-group fibre. These phenomena are
known to experts, e.g.~Khramtsov \cite{khramtsov} and Cashen--Levitt \cite{cashenlevitt}. 
%In ongoing work, Naomi Andrew is also exploring this topic in connection with \cite{AP}.
This topic is also being explored in an ongoing work of Andrew and Patchkoria, 
who obtained a version of \Cref{p:alg-version} independently \cite{AP}. 

\subsection{The disguises of $F_M\times\Z$}
\label{s:FxZ}

We know from \Cref{t:main} that 
if $\G=F_N\rtimes_\phi\Z$ has the same finite quotients as $F_M\times\Z$
then $\G\cong F_M\times\Z$. 
In fact, this can be proved much more directly: 
$Z(\G)$ is infinite, by \Cref{l:l2centre}, 
and \Cref{centers_computed} tells us that $\G/Z(\G)$ is torsion-free, 
since it embeds in $\wh{\G/Z(\G)}\cong \wh{F}_M$, so
\Cref{l:seeFxZ} completes the proof. 
In this section, we investigate what $N$ and $\phi$ can be.  

\begin{proposition}
\label{p:alg-version}
	For all integers $M\ge 2$ and $N\ge M$, 
	if $M-1$ divides $N-1$ 
	then there is a unique conjugacy class of elements $[\phi]\in\outN$ 
	such that $F_N\rtimes_{\phi}\Z\cong F_M\times\Z$.
	
	If $M-1$ does not divide $N-1$, 
	then no group of the form $F_N\rtimes\Z$ 
	is isomorphic to $F_M\times\Z$. 
\end{proposition}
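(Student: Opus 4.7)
The plan is to establish the two directions of the equivalence via the GBS machinery of \Cref{sec:GBS}, and then prove uniqueness by classifying the relevant epimorphisms $F_M\times\Z\to\Z$ up to automorphism. The key observation is \Cref{l:seeFxZ}: $\G\cong F_M\times\Z$ if and only if $\G/Z(\G)$ is free; by \Cref{corollary:GmodZ} this happens exactly when every local group of the graph of finite cyclic groups $\-{\GG}$ is trivial, equivalently, when the Culler realisation $f:X\to X$ of $[\phi]$ from \Cref{prop:graph_of_Zs} acts \emph{freely} on $X$.

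For necessity, if $f$ acts freely of order $m$ then $Y:=X/\<f\>$ is the base of an ordinary $m$-sheeted regular cover, so $\chi(X)=m\chi(Y)$, i.e.\ $1-N=m(1-M)$, and $M-1$ divides $N-1$. Conversely, given $N\ge M$ and setting $m:=(N-1)/(M-1)$, I would construct $\phi$ directly: take any finite graph $Y$ with $\pi_1 Y\cong F_M$ and an $m$-sheeted regular cyclic cover $X\to Y$; Euler characteristic forces $\pi_1 X\cong F_N$, a deck generator $f$ acts freely of order $m$, and the mapping torus $M_f$ fibres as a circle bundle over $Y$. Because $Y$ is one-dimensional the bundle is classified by $H^2(Y;\Z)=0$ and is therefore trivial, so $M_f\cong Y\times S^1$ and $\pi_1 M_f\cong F_M\times\Z$. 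That $[\phi]$ has order exactly $m$ in $\outN$ follows because the deck action on $F_N$ is faithful modulo inner automorphisms: if some power $f^j$ acted on $F_N$ as an inner automorphism, a non-trivial element of $F_M$ would centralise the non-abelian finite-index subgroup $F_N$, which is impossible in a free group.

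The substantive step is uniqueness. Two realisations $F_N\rtimes_{\phi_i}\Z\cong F_M\times\Z$ give two epimorphisms $\rho_i:F_M\times\Z\onto\Z$ with kernels equal to the respective copies of $F_N$, and the plan is to show that $\aut(F_M\times\Z)$ acts transitively on such $\rho$'s up to the sign automorphism of $\Z$; an automorphism of $F_M\times\Z$ carrying $\ker\rho_1$ onto $\ker\rho_2$ automatically conjugates $[\phi_1]$ to $[\phi_2]$ in $\outN$. Representing $\rho$ by a tuple $(a_1,\dots,a_M;c)\in H^1(F_M\times\Z;\Z)=\Z^{M+1}$, surjectivity gives $\gcd(a_1,\dots,a_M,c)=1$; and since $c=0$ would make $\ker\rho$ of the form $F\times\Z$, hence not free, we have $c\ne 0$. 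Setting $d=\gcd(a_1,\dots,a_M)$, a Nielsen--Schreier count identifies $\ker\rho$ with a subgroup of $F_M$ of index $|c|/\gcd(d,c)=|c|$, which is therefore free of rank $1+(M-1)|c|$; so $|c|=m$, and after $z\mapsto z^{-1}$ we may take $c=m$.

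It remains to reduce $(a_1,\dots,a_M;m)$ to the standard form $(1,0,\dots,0;m)$, which is the main obstacle. The available automorphisms of $F_M\times\Z$ include the $GL_M(\Z)$-action of $\aut(F_M)$ on $(a_1,\dots,a_M)$ and the ``twists'' $a_i\mapsto a_iz^{k_i}$ for $k_i\in\Z$, which modify each $a_i$ by an arbitrary multiple of $m$. Viewing $(a_1,\dots,a_M)$ modulo $m$, its entries generate $\Z/m$ since $\gcd(a_1,\dots,a_M,m)=1$, so the tuple can be carried to $(1,0,\dots,0)\in(\Z/m)^M$ by an element of $GL_M(\Z/m)$ lifted to $GL_M(\Z)$; a final twist clears the integer coordinates to $(1,0,\dots,0)$. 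This reduction produces a unique standard form, yielding the uniqueness of the conjugacy class of $[\phi]$ in $\outN$.
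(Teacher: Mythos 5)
Your argument is correct, and while the first half (the reduction to free actions via \Cref{l:seeFxZ} and \Cref{corollary:GmodZ}, and the Euler characteristic count $1-N=m(1-M)$) matches the paper, both your existence and your uniqueness steps take genuinely different routes. For existence, the paper constructs the cyclic cover $X\to R_M$ and lets the earlier GBS equivalence certify that $\pi_1M_f\cong F_M\times\Z$, whereas you observe directly that $M_f\to X/\<f\>$ is an orientable circle bundle over a graph, hence trivial since $H^2(Y;\Z)=0$; this is a clean shortcut. For uniqueness, the paper compares two free $\Z/m$-actions by passing to roses and invoking the uniqueness of the Nielsen equivalence class of generating $M$-tuples of $\Z/m$, while you classify epimorphisms $\rho:F_M\times\Z\onto\Z$ with free kernel up to $\aut(F_M\times\Z)$, acting on $H^1(F_M\times\Z;\Z)\cong\Z^{M+1}$ by $GL_M(\Z)$ together with the transvections $x_i\mapsto x_iz^{k_i}$ and $z\mapsto z^{-1}$. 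The two approaches hinge on essentially the same arithmetic fact (transitivity on unimodular vectors over $\Z/m$ versus uniqueness of the Nielsen class), but yours buys a self-contained classification of all fibrations of $F_M\times\Z$ over $\Z$ without any appeal to geometric realisations or markings in the uniqueness half, and your index computation $|c|=m$ (using $\gcd(d,c)=1$ from surjectivity) is a nice replacement for the covering-space bookkeeping. One small repair: the reduction $GL_M(\Z)\to GL_M(\Z/m)$ is not surjective (determinants), so you should carry out the reduction of $(\bar a_1,\dots,\bar a_M)$ to $(\bar 1,0,\dots,0)$ inside $SL_M(\Z/m)$ — which still acts transitively on unimodular vectors for $M\ge 2$ — and lift through the surjection $SL_M(\Z)\onto SL_M(\Z/m)$; this does not affect the argument.
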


\begin{proof} 
	As in the proof of \Cref{prop:graph_of_Zs}, 
	if $\Phi = [\phi]\in\outN$ has finite order $m$, 
	we can realise it by an isometry $f:X\to X$ 
	of a reduced graph with $\pi_1X\cong F_N$ 
	and thereby express $\G=F_N\rtimes_{\phi}\Z$ 
	as the fundamental group 
	of a reduced finite graph of infinite cyclic groups, 
	and express $\G/Z(\G)$ as the fundamental group 
	of a graph of finite-cyclic groups, as in \Cref{corollary:GmodZ}. 
	In order for $\G/Z(\G)$ to be torsion-free, 
	as required by \Cref{l:seeFxZ}, 
	it is necessary and sufficient
	that the local groups in this decomposition be trivial, 
	which means that 
	all of the integers $d_e$ and $d_v$ in \Cref{corollary:GmodZ} 
	must equal $1$ and the integers $\delta_e$ 
	in the proof of \Cref{prop:graph_of_Zs} must equal $1$. 
	This means all of the $\<f\>$-orbits of cells in $X$ 
	must have the same size, 
	and since $m$ is the order of $f$, 
	$\<f\>\cong\Z/m$ must act freely. 
	Thus $\G=F_N\rtimes_{\phi}\Z$ will be 
	isomorphic to a group of the form $F_M\times\Z$ if and only if 
	the action of $\Z/m\cong\<\Phi\>\cong\< f\>$ 
	in any reduced geometric realisation $f:X\to X$ is free. 
	(In fact, the argument shows that 
	if this is true in one reduced geometric realisation, 
	then it is true for them all.) 
	In this case, $\G/Z(\G)\cong F_M$ will be 
	the fundamental group of the graph $X/\<f\>$. 
	Thus this quotient must have Euler characteristic $1-M$ 
	and its $m$-sheeted covering $X$ 
	must have Euler characteristic $m(1-M)$. 
	But $\pi_1X\cong F_N$, so $\chi(X) = 1-N = m(1-M)$. 
	
	At this stage, we have proved the assertion 
	in the second sentence of the proposition 
	and we have proved that when $M-1$ does divide $N-1$, 
	if $F_N\rtimes_{\phi}\Z\cong F_M\times\Z$ 
	then $\Phi\in\outN$ must have order $m=(N-1)/(M-1)$. 
	It remains to prove that such elements $\phi$ exist 
	and that $\Phi$ is unique up to conjugacy in $\outN$. 
	
	We have argued that the existence of $\phi$ 
	is equivalent to the existence of a free action of $\Z/m$ 
	on a reduced graph of Euler characteristic $1-N$. 
	To obtain such an action, 
	we fix an identification of $F_M$ 
	with the fundamental group of the $M$-rose $R_M$ 
	(the 1-vertex graph with $M$ loops), 
	take an epimorphism ${p:F_M\to \Z/m}$, 
	choose a primitive element $a\in F_M$ 
	whose image generates $\Z/m$ 
	and consider the covering space $X\to R_M$ 
	corresponding to $\ker p<\pi_1 R_M$. 
	The Euler characteristic of this cover is $1-N$, 
	so its fundamental group is free of rank $N$. 
	Conjugation by $a$ induces 
	a deck transformation $f : X\to X$ 
	generating the Galois group, which is cyclic of order $m$; 
	this group acts freely by isometries. 
	We fix an isomorphism $\iota:\pi_1(X,x_0)\cong F_N$ 
	and take $\phi$ to be any automorphism 
	that lies in the outer automorphism class defined by $f$ 
	(transported by $\iota$). 
	Note that since $f$ does not fix the basepoint $x_0\in X$, it is only the outer automorphism class of $\phi$ that is defined
	by $\iota$ and $f$. Note also that if we change the marking $\iota:\pi_1(X,x_0)\to F_N$
	by composing with an automorphism of $F_N$, we will vary $[\phi]\in\outN$ in its conjugacy class. Thus the
	most one can ask for it terms of the uniqueness of $\phi$ is that the conjugacy class of $[\phi]\in\outN$
	should be unique.
	
	In order to establish that $\phi$ enjoys this degree of uniqueness,  
	we consider distinct free actions of $\Z/m$ on compact graphs of euler characteristic
	$1-N$, say $X$ and $X'$.
	We fix generating isometries $f$ and $f'$ for these actions, and we must construct an isomorphism $\pi_1X\to \pi_1X'$
	that conjugates the outer automorphism defined by $f$ to the outer automorphism defined by $f'$. (One could realise
	this by a homotopy equivalence, but there will not be a conjugating isometry in general.)
	
	To this end, we fix markings (isomorphisms)
	$\mu:F_M\to \pi_1(X/\<f\>)$ 
	and $\mu':F_M\to \pi_1(X'/\<f'\>)$ 
	and epimorphisms $p: \pi_1(X/\<f\>)\to \Z/m$
	and ${p': \pi_1(X'/\<f'\>)\to \Z/m}$, 
	the kernels of which give the covering spaces 
	$X\to X/\<f\>$ and $X'\to X'/\<f'\>$. 
	By shrinking a maximal tree in each, 
	we can assume that $X/\<f\>$ and $X'/\<f'\>$ 
	are $M$-roses, i.e. graphs with a single vertex. 
	(This amounts to moving between geometric representatives 
	of the given automorphisms in the fixed point set 
	in the spine of Culler and Vogtmann's Outer space 
	\cite{culler1986moduli}.) 
	We adjust our choice of markings so that both roses have edges labelled by the same basis $\{a_1,\dots,a_M\}$
	of $F_M$; here, implicity, we are moving between conjugates of the embeddings $\Z/m\hookrightarrow \outN$
	that are represented -- and note that it is really the embeddings, not the images 
	(we still have a preferred choice of generator 
	coming from $f$ and $f'$). 
	
	With these reductions, we have a fixed identification 
	that we regard as an equality $R_M=X/\<f\> = X'/\<f'\>$
	and $\pi_1(X/\<f\>)=F_M$ 
	but we are still carrying along distinct choices 
	of conjugacy classes $[t]$ and $[t']$ in $F_M$ 
	representing the deck transformations $f$ and $f'$. 
	Now, the covering spaces $X\to R_M$ and $X'\to R_M$ 
	correspond (under the standard Galois correspondence) 
	to epimorphisms $p:F_M\to \Z/m$ and $p':F_M\to\Z/m$ 
	such that $p(t)=\theta$ and $p'(t')=\theta$, 
	where $\theta$ is a fixed generator of $\Z/m$; 
	i.e. the fundamental groups of these coverings are 
	$K:=\ker p$ and $K':=\ker p'$,
	the action of $f$ on $X$ corresponds to 
	conjugation of $\ker p$ by $t$ 
	(equivalently, any element of $tK$), and 
	the action of $f'$ on $X'$ corresponds to 
	conjugation of $\ker p'$ by $t'$ 
	(equivalently, any element of $t'K'$). 
	
	The crucial point to observe now
	is that there is only one Nielsen equivalence class 
	of generating sets for $\Z/m$ of each cardinality $M\ge 2$ 
	(it is easy to prove this directly 
	using Euclid's algorithm). 
	Hence there is an automorphism $\psi:F_M\to F_M$ 
	such that $p'\circ\psi = p$. 
	Then, $K' = \psi(K)$ and $\psi(tK)=t'K'$; 
	say $\psi(t) = t'k'$ with $k'\in K'$. 
	
	The restriction of $\psi$ gives us an isomorphism 
	from $\pi_1X = K \cong F_N$ to $\pi_1X'=K'\cong F_N$ 
	that conjugates	the automorphism $\ad_{t}|_K$ of $K$ 
	to the automorphism $\ad_{t'k'}|_{K'}$ of $K'$; 
	the former, by definition, 
	lies in the outer automorphism class defined by $f$ 
	and the latter 
	is in the outer automorphism class defined by $f'$. 
\end{proof}

\begin{corollary}
	If an element  $\Phi\in\outN$ of finite order $m$ 
	has a geometric realisation $f:X\to X$ 
	such that the action of $\<f\>$ is free, 
	then $\Phi$ is conjugate in $\outN$ to $\Phi^j$ 
	for all integers $j$ coprime to $m$. 
\end{corollary}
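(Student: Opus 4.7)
The strategy is to translate the conjugacy problem in $\outN$ into a Nielsen-equivalence question about generating tuples of $\Z/m$, and then apply the same mechanism already deployed in the uniqueness part of \Cref{p:alg-version}. I would start by fixing the given free realisation $f : X \to X$ and an isomorphism $\iota : \pi_1 X \cong F_N$ with $\Phi = [\iota f_* \iota^{-1}]$. Because $\gcd(j,m)=1$, the iterate $f^j$ generates the same cyclic group $\<f\>$, so it is itself a free-isometry realisation of $\Phi^j$ with the \emph{same} quotient $Y := X/\<f\> = X/\<f^j\>$. After shrinking a maximal tree I may assume $Y$ is an $M$-rose $R_M$; under the Galois correspondence, $X \to R_M$ corresponds to an epimorphism $p : F_M \twoheadrightarrow \Z/m$ with kernel $K = \pi_1 X$. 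Fixing a generator $\theta \in \Z/m$ and a lift $t \in F_M$ with $p(t) = \theta$, the outer class $\Phi$ is represented by $\ad_t|_K \in \aut(K)$ and $\Phi^j$ by $\ad_{t^j}|_K$. (The assumption that $\Z/m$ acts freely, together with $N \ge 2$, forces $M \ge 2$.)

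The central step will be to invoke the Nielsen-equivalence fact from the proof of \Cref{p:alg-version}: for $M \ge 2$, any two surjections $F_M \twoheadrightarrow \Z/m$ differ by precomposition with an element of $\aut(F_M)$ (established there via Euclid's algorithm applied to pairs of generators). Since $j$ is coprime to $m$, the map $\sigma_j : \Z/m \to \Z/m,\ \theta \mapsto \theta^j$ is an automorphism of $\Z/m$, and so $\sigma_j \circ p$ is another epimorphism $F_M \twoheadrightarrow \Z/m$. The lemma therefore supplies $\psi \in \aut(F_M)$ satisfying $p \circ \psi = \sigma_j \circ p$; in particular $\psi(K) = K$, so $\psi$ restricts to an automorphism $\Psi := \psi|_K \in \aut(K)$.

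To conclude, I would verify that $\Psi$ realises the desired conjugation. A direct computation in $\aut(F_M)$ gives $\Psi \circ \ad_t|_K \circ \Psi^{-1} = \ad_{\psi(t)}|_K$, and the relation $p(\psi(t)) = \sigma_j(p(t)) = \theta^j = p(t^j)$ shows $\psi(t) \in t^j K$. Hence $\ad_{\psi(t)}|_K$ and $\ad_{t^j}|_K$ agree modulo inner automorphisms of $K$, so they determine the same element of $\out(K)$; transporting across $\iota$ then yields $[\Psi]\, \Phi\, [\Psi]^{-1} = \Phi^j$ in $\outN$, as required. The only substantive input is the Nielsen-equivalence step, which is precisely the ingredient already used in \Cref{p:alg-version}, so I do not anticipate any obstacle beyond checking that the bookkeeping for $\sigma_j$ threads correctly through the computation.
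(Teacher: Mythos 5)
Your argument is correct and is essentially the paper's intended proof: the corollary is stated without proof precisely because it follows from the uniqueness clause of \Cref{p:alg-version} applied to the two generating isometries $f$ and $f^j$ of the same free $\<f\>$-action, and your write-up simply unwinds that mechanism (rose quotient, Galois correspondence, single Nielsen class of generating $M$-tuples of $\Z/m$) with the twist $\sigma_j$ made explicit. The bookkeeping with $p\circ\psi=\sigma_j\circ p$ and $\psi(t)\in t^jK$ checks out.
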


This corollary becomes false if one assumes only that $f$ itself has no fixed points in $X$,
as our next construction shows.

\subsection{Automorphisms not conjugate to coprime powers}

The discussion that follows revolves around an understanding 
of the different geometric realisations $f:X\to X$
of a torsion element $\Phi\in\outN$. 
The first thing to note is that the discussion is 
about outer automorphisms rather than automorphisms, 
so we are not concerned with basepoints. 
The second point to note is that we are really only concerned 
with conjugacy classes in $\outN$,
which means that we do not fuss over details of the marking 
(i.e. the fixed isomorphism $\pi_1X\cong F_N$), 
since a change of marking replaces $\Phi$ by a conjugate. 
Nevertheless, a marking is needed 
if we want to talk about a specific $\Phi\in\outN$, 
and markings are also needed to pin down 
the connection between geometric realisations of $\Phi$ 
and the fixed points of $\Phi$ 
in the standard action of $\outN$ on the spine $K_N$ 
of Outer space \cite{culler1986moduli}. 
%the stabiliser of a point in $K_N$
%is isomorphic to the isometry group of the graph, and the vertices  fixed by $\Phi$
%are the geometric realisations of $\Phi$. 
We will rely on the study of fixed-point sets by 
Krsti\'{c} \cite{krstic1989actions} 
and Kristi\'{c}--Vogtmann \cite{krstic1993equivariant}, 
who prove that ${\rm{Fix}}(\Phi)\subset K_N$ is connected 
and is the geometric realisation of the poset $X\preceq X'$ 
where $X$ and $X'$ are realisations of $\Phi$ 
such that $X$ is obtained from $X'$ 
by the collapse of a $\Phi$-invariant forest. 

We fix an odd integer $m\ge 5$ 
and an integer $r>1$ coprime to it. 
Let $X$ be the graph that has two circuits of length $m$ 
with the $i$-th vertex of the first (inner) circuit 
connected tothe $i$-th vertex of the second (outer) circuit 
by $r$ edges; 
the case $m = 5$ and $r = 3$ is shown in \Cref{mr-graph}. 
The rank of $\pi_1X$ is $mr + 1$ 
and the isometry group of $X$ is 
$$G=\big((
	\oplus_{i=1}^m\sym(r))\rtimes D_{2m}\big) \times (\Z/2),$$
where the action of $D_{2m}$ on $\oplus_{i=1}^m\sym(r))$
preserves both the inner and outer $m$-cycle, 
acting in the obvious way,
the $i$-th copy of $\sym(r)$ 
permutes the edges connecting 
the $i$-th vertices of the inner and outer circuits, 
and the central $1\times (\Z/2)$ 
interchanges the inner and outer cycles, 
sending each of the connecting edges to itself 
with reversed orientation. 

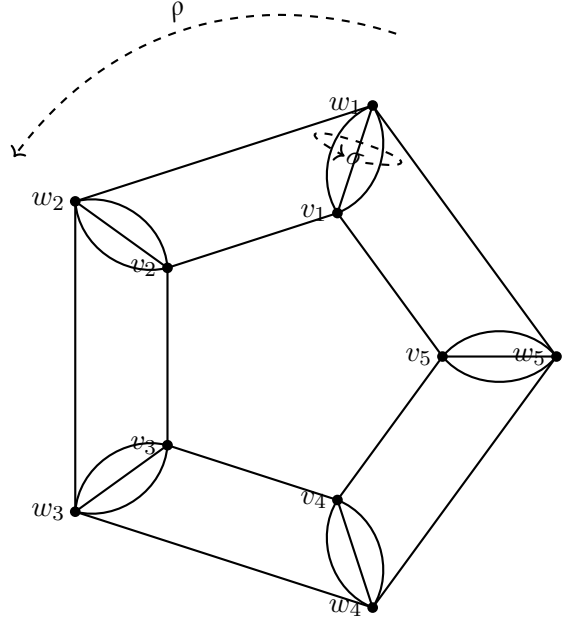
\begin{figure}
	\begin{tikzpicture}
	
	% Define the number of sides of the polygons
	\def\m{5}
	\def\r{3}
	
	% Define the excess skip in the outer ring
	\def\skip{0}
	
	% Define the radii of the two polygons
	\def\radiusA{2}
	\def\radiusB{3.5}
	\def\radiusC{4.5} % Additional radius for the arc
	\def\radiusD{2.75}
	
	% Define the maximum bend angle
	\def\bigangle{50}
	
	% Draw the first polygon (inner ring)
	\foreach \i in {1,...,\m} {
		\coordinate (A\i) at (360/\m * \i: \radiusA);
		\fill (A\i) circle (2pt);
		\node[left] at (A\i) {$v_{\i}$}; % Label for inner ring vertices
	}
	
	% Draw the second polygon (outer ring)
	\foreach \i in {1,...,\m} {
		\coordinate (B\i) at (360/\m * \i: \radiusB);
		\fill (B\i) circle (2pt);
		\node[left] at (B\i) {$w_{\i}$}; % Label for outer ring vertices
	}
	
	% Connect corresponding vertices with r curved edges
	\foreach \i in {1,...,\m} {
		\foreach \j in {1,...,\r} {
			% Adjust the bend angle for each edge
			\pgfmathsetmacro{\angle}{-\bigangle + 2*\bigangle*(\j-1)/(\r-1)}
			\draw[thick] (A\i) to [bend left=\angle] (B\i);
		}
	}
	
	% Draw the edges of the polygons with a skip in the inner ring
	\foreach \i in {1,...,\m} {
		\pgfmathtruncatemacro{\nexti}{mod(\i,\m) + 1}
		\pgfmathtruncatemacro{\skippednexti}{mod(\i+\skip,\m) + 1}
		\draw[thick] (A\i) -- (A\skippednexti);  % Inner ring edges
		\draw[thick] (B\i) -- (B\nexti);         % Outer ring edges
	}
	
	% Define new points further out for the arc
	\foreach \i in {1,2} {
		\coordinate (C\i) at (360/\m * \i: \radiusC);
	}
	
	% Add a curved arc indicating a rotation by 1 click, further from the center
	\draw[thick, ->, bend right=180/\m, dashed] (C1) to node[midway, above, sloped] {$\rho$} (C2);
	
	\draw[thick, ->, dashed, bend right=(180-0.3*180/\m), looseness=15] 
	(360/\m - 3: \radiusD) to node[midway, below, sloped] {$\sigma$} (360/\m + 3: \radiusD);
	
\end{tikzpicture}
\caption{The graph $X$ with labelled vertices.}
\label{mr-graph}
\end{figure}

\begin{figure}
\centering
% Define the number of sides of the polygons
\def\m{5}
\def\r{3}

% Define the excess skip in the outer ring
\def\skip{0}

% Define the radii of the two polygons
\def\radiusA{1}
\def\radiusB{1.75}
\def\radiusC{2.25} % Additional radius for the arc
\def\radiusD{0.75}

% Define the maximum bend angle
\def\bigangle{50}

\subfigure[Graph $X$.]{
	\begin{tikzpicture}
		% Draw the first polygon (inner ring)
		\foreach \i in {1,...,\m} {
			\coordinate (A\i) at (360/\m * \i: \radiusA);
			\fill (A\i) circle (2pt);
		}
		
		% Draw the second polygon (outer ring)
		\foreach \i in {1,...,\m} {
			\coordinate (B\i) at (360/\m * \i: \radiusB);
			\fill (B\i) circle (2pt);
		}
		
		% Connect corresponding vertices with r curved edges
		\foreach \i in {1,...,\m} {
			\foreach \j in {1,...,\r} {
				% Adjust the bend angle for each edge
				\pgfmathsetmacro{\angle}{-\bigangle + 2*\bigangle*(\j-1)/(\r-1)}
				\draw[thick] (A\i) to [bend left=\angle] (B\i);
			}
		}
		
		% Draw the edges of the polygons with a skip in the inner ring
		\foreach \i in {1,...,\m} {
			\pgfmathtruncatemacro{\nexti}{mod(\i,\m) + 1}
			\draw[thick] (A\i) -- (A\nexti);  % Inner ring edges
			\draw[thick] (B\i) -- (B\nexti);         % Outer ring edges
		}
	\end{tikzpicture}
}
\subfigure[Graph $X^+$.]{
	\begin{tikzpicture}
		% Draw the first polygon (inner ring)
		\foreach \i in {1,...,\m} {
			\coordinate (A\i) at (360/\m * \i: \radiusA);
			\fill (A\i) circle (2pt);
		}
		
		% Draw the second polygon (outer ring)
		\foreach \i in {1,...,\m} {
			\coordinate (B\i) at (360/\m * \i: \radiusB);
			\fill (B\i) circle (2pt);
		}
		
		% Draw an outer outer ring
		\foreach \i in {1,...,\m} {
			\coordinate (C\i) at (360/\m * \i: \radiusC);
			\fill (C\i) circle (2pt);
		}
		
		% Connect corresponding vertices with r curved edges
		\foreach \i in {1,...,\m} {
			\foreach \j in {1,...,\r} {
				% Adjust the bend angle for each edge
				\pgfmathsetmacro{\angle}{-\bigangle + 2*\bigangle*(\j-1)/(\r-1)}
				\draw[thick] (A\i) to [bend left=\angle] (B\i);
			}
		}
		
		% Draw the edges of the polygons with a skip in the inner ring
		\foreach \i in {1,...,\m} {
			\pgfmathtruncatemacro{\nexti}{mod(\i,\m) + 1}
			\draw[thick] (A\i) -- (A\nexti);  % Inner ring edges
			\draw[thick] (B\i) -- (C\i);
			\draw[thick] (C\i) -- (C\nexti);         % Outer ring edges
		}
	\end{tikzpicture}
}
\\
\subfigure[Graph $X^-.$]{
	\begin{tikzpicture}
		% Draw the first polygon (inner ring)
		\foreach \i in {1,...,\m} {
			\coordinate (A\i) at (360/\m * \i: \radiusA);
			\fill (A\i) circle (2pt);
		}
		
		% Draw the second polygon (outer ring)
		\foreach \i in {1,...,\m} {
			\coordinate (B\i) at (360/\m * \i: \radiusB);
			\fill (B\i) circle (2pt);
		}
		
		% Draw an inner inner ring
		\foreach \i in {1,...,\m} {
			\coordinate (D\i) at (360/\m * \i: \radiusD);
			\fill (D\i) circle (2pt);
		}
		
		% Connect corresponding vertices with r curved edges
		\foreach \i in {1,...,\m} {
			\foreach \j in {1,...,\r} {
				% Adjust the bend angle for each edge
				\pgfmathsetmacro{\angle}{-\bigangle + 2*\bigangle*(\j-1)/(\r-1)}
				\draw[thick] (A\i) to [bend left=\angle] (B\i);
			}
		}
		
		% Draw the edges of the polygons with a skip in the inner ring
		\foreach \i in {1,...,\m} {
			\pgfmathtruncatemacro{\nexti}{mod(\i,\m) + 1}
			\draw[thick] (B\i) -- (B\nexti);  % Inner ring edges
			\draw[thick] (A\i) -- (D\i);
			\draw[thick] (D\i) -- (D\nexti);         % Outer ring edges
		}
	\end{tikzpicture}
}
\subfigure[Graph $X^\pm$.]{
	\begin{tikzpicture}
		% Draw the first polygon (inner ring)
		\foreach \i in {1,...,\m} {
			\coordinate (A\i) at (360/\m * \i: \radiusA);
			\fill (A\i) circle (2pt);
		}
		
		% Draw the second polygon (outer ring)
		\foreach \i in {1,...,\m} {
			\coordinate (B\i) at (360/\m * \i: \radiusB);
			\fill (B\i) circle (2pt);
		}
		
		% Draw an inner inner ring
		\foreach \i in {1,...,\m} {
			\coordinate (C\i) at (360/\m * \i: \radiusC);
			\fill (C\i) circle (2pt);
			\coordinate (D\i) at (360/\m * \i: \radiusD);
			\fill (D\i) circle (2pt);
		}
		
		% Connect corresponding vertices with r curved edges
		\foreach \i in {1,...,\m} {
			\foreach \j in {1,...,\r} {
				% Adjust the bend angle for each edge
				\pgfmathsetmacro{\angle}{-\bigangle + 2*\bigangle*(\j-1)/(\r-1)}
				\draw[thick] (A\i) to [bend left=\angle] (B\i);
			}
		}
		
		% Draw the edges of the polygons with a skip in the inner ring
		\foreach \i in {1,...,\m} {
			\pgfmathtruncatemacro{\nexti}{mod(\i,\m) + 1}
			\draw[thick] (D\i) -- (D\nexti);  % Inner ring edges
			\draw[thick] (A\i) -- (D\i);
			\draw[thick] (C\i) -- (C\nexti);         % Outer ring edges
			\draw[thick] (B\i) -- (C\i);
		}
	\end{tikzpicture}
}
	\caption{Graphs fixed by the action of $\<\alpha\>$ on Outer space.}
	\label{fig:realisations}
\end{figure}
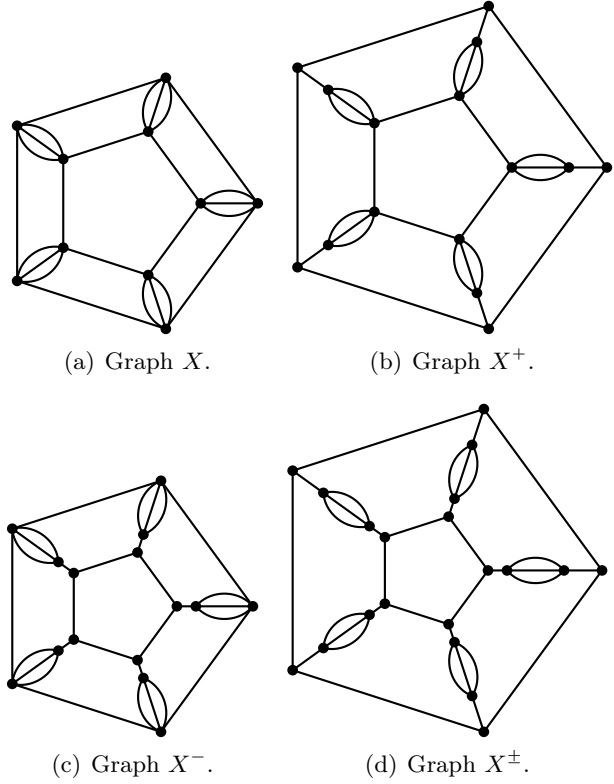

Consider the isometry $\alpha$ 
of $X$ that rotates the long cycles by one click $i\mapsto i+1 \mod m$ and acts as a cycle 
$\sigma\in \sym(r)$ of
length $r$ on each of the cages of $r$ edges. In the displayed decomposition of 
$$G=(\oplus_{i=1}^m\sym(r)\rtimes D_{2m})\times (\Z/2),$$
this has the form $((\sigma,\dots,\sigma), \rho, 0)$ 
where $\rho$ is the $m$-cycle $(1\, 2\, \dots \, m)$.

\smallskip

We would like to say that  $\alpha:X\to X$ is 
the unique geometric realisation 
of the conjugacy class in $\outN$ that it represents, 
where $N=mr+1$ 
-- this is almost true, but not quite. 
We also have to consider the graph $X^+$ obtained from $X$ 
by blowing-up the vertices in the outer circuit, 
separating it from the incident $r$-cage with a new edge; 
we have to consider the graph $X^-$ obtained from $X$ 
by modifying each vertex of the inner circuit instead; 
and we need the graph $X^\pm$
that is obtained from $X$ 
by performing both sets of blow-ups. 
All of the graphs $X, X^+, X^-, X^\pm$ are shown 
in \Cref{fig:realisations}. 
It is clear that each set of blow-ups 
can be performed in an $\alpha$-equivariant manner. 

\begin{lemma} 
\label{l1}
	Let $\alpha:X\to X$ be the isometry defined above, fix an identification $\pi_1X\cong F_N$
	and let $\Phi\in\outN$ be the outer automorphism defined by $\alpha$. Then, 
	\begin{enumerate}
		\item 
			$\alpha$ (and hence $\Phi$) has order $mr$.
		\item 
			In $G ={{\rm{Isom}}}(X)$, 
			the elements $\alpha$ and $\alpha^j$ are conjugate 
			if and only if $j \equiv \pm 1 \mod m$. 
		\item 
			The only geometric realisations of $\Phi$ 
			are $X,\, X^+,\, X^-$ and $X^\pm$. 
	\end{enumerate} 
\end{lemma}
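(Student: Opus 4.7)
The plan is to treat the three parts in order. For part (1), I would compute directly in the semidirect product: writing $\alpha = ((\sigma,\ldots,\sigma), \rho, 0)$ and using that $\rho$ permutes the summands of $\oplus_{i=1}^m \sym(r)$ cyclically while the diagonal tuple is constant, a straightforward induction yields $\alpha^k = ((\sigma^k,\ldots,\sigma^k), \rho^k, 0)$ for every $k$. Thus $\alpha^k = 1$ forces $m \mid k$ and $r \mid k$, and $\gcd(m,r) = 1$ gives $\mathrm{ord}(\alpha) = \lcm(m,r) = mr$ in $\isom(X)$. Since $X$ is a reduced graph, no proper power of a non-trivial isometry acts as an inner automorphism, so $\Phi$ has the same order as $\alpha$.

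For part (2), the ``only if'' direction is immediate from the projection $G \twoheadrightarrow D_{2m}$ (killing $\oplus \sym(r)$ and the central $\Z/2$), which sends $\alpha \mapsto \rho$ and $\alpha^j \mapsto \rho^j$: in $D_{2m}$ with $m$ odd, $\rho^j$ is conjugate to $\rho$ iff $j \equiv \pm 1 \pmod m$. For the converse, I would first note that conjugacy forces $\mathrm{ord}(\alpha^j) = mr$, hence $\gcd(j,r)=1$, so $\sigma^j$ is an $r$-cycle and there exists $\tau \in \sym(r)$ with $\tau \sigma^j \tau^{-1} = \sigma$. A direct computation in the semidirect product, using crucially that $\rho$ and any reflection $s \in D_{2m}$ act trivially on the diagonal tuple $(\sigma^j,\ldots,\sigma^j)$, shows that $((\tau,\ldots,\tau), e, 0)$ conjugates $\alpha^j$ to $\alpha$ when $j \equiv 1 \pmod m$, and $((\tau,\ldots,\tau), s, 0)$ does so when $j \equiv -1 \pmod m$ (the reflection first flips $\rho^{-1}$ to $\rho$, then the diagonal $\tau$ adjusts the $\sym(r)$-coordinate).

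For part (3), the main tool is the theorem of Krsti\'{c} \cite{krstic1989actions} and Krsti\'{c}--Vogtmann \cite{krstic1993equivariant} identifying $\fix(\Phi) \subset K_N$ with the geometric realisation of the poset whose elements are realisations of $\Phi$, ordered by $Y' \preceq Y$ when $Y'$ is obtained from $Y$ by collapsing a $\Phi$-invariant forest. The strategy has three steps: first, verify directly that $X^\pm$ realises $\Phi$, and identify its two disjoint $\<\alpha\>$-invariant forests---the ``inner blow-up'' edges $\{v_i v_i'\}_{i=1}^m$ and the ``outer blow-up'' edges $\{w_i w_i'\}_{i=1}^m$---whose independent collapses produce exactly $X^+$, $X^-$ and $X$, respectively; second, show that $X^\pm$ is maximal in the poset (no $\Phi$-equivariant blow-up exists); third, use connectedness of $\fix(\Phi)$ to conclude that $X^\pm$ is the unique maximum, so every realisation of $\Phi$ appears on the list in step one.

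The main obstacle is the maximality step. At the vertices $v_i', w_i'$ (valence $3$, trivial stabiliser) any equivariant blow-up would destroy reducedness. The hard case is the vertices $v_i$ and $w_i$: here the stabiliser $\<\alpha^m\> \cong \Z/r$ acts on the $r+1$ germs by permuting the $r$ cage-germs as a single $\sigma$-orbit and fixing the adjacent blow-up germ. The only $\Z/r$-invariant partition of this star separates the cage from the blow-up edge, and that separation is already realised by the blow-up edge in $X^\pm$; a systematic verification of this, tracking simultaneously the local $\<\alpha^m\>$-action and the compatibility with the global $\<\alpha\>$-orbits on edges, will complete the argument.
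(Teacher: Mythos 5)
Your treatments of (1) and (2) match the paper's. For (1) the paper simply says the claim is immediate from coprimality of $m$ and $r$; your extra observation that $\isom(X)\to\out(\pi_1X)$ is injective for a reduced graph, so that $\Phi$ inherits the order of $\alpha$, makes explicit a point the paper glosses over. For (2) the paper uses exactly your two ingredients: the retraction of $G$ onto $D_{2m}$ for the ``only if'' direction, and an explicit diagonal conjugator $((\tau,\dots,\tau),c,0)$ for the converse. One caveat: your justification that $\sigma^j$ is an $r$-cycle (``conjugacy forces $\mathrm{ord}(\alpha^j)=mr$, hence $\gcd(j,r)=1$'') is circular in the converse direction, where conjugacy is the conclusion, not a hypothesis; $j\equiv\pm1\pmod m$ alone does not give $\gcd(j,r)=1$. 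One should either add coprimality of $j$ with $mr$ as a standing hypothesis or note that the lemma is only ever applied in that case (the paper's own proof is equally silent here, so this is not a defect specific to your argument).

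For (3) you invoke the same Krsti\'{c}--Vogtmann description of $\fix(\Phi)$ as a poset of realisations under invariant-forest collapse, but you run the combinatorics top-down from $X^\pm$ where the paper runs bottom-up from $X$ (no invariant forest in $X$; enumerate the equivariant blow-ups of $X$; enumerate the invariant forests of those blow-ups). Your enumeration of the invariant forests of $X^\pm$ and your local analysis at the cage vertices are correct. The genuine gap is in your final step: connectedness of $\fix(\Phi)$ together with maximality of $X^\pm$ does \emph{not} imply that $X^\pm$ is the unique maximal element --- a connected finite poset can have many maximal elements. What is actually needed is that $S=\{X,X^+,X^-,X^\pm\}$ is closed under comparability in the poset. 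Downward closure follows from your forest enumeration and transitivity of collapse, but the case of an element $Z\succ W$ with $W\in\{X,X^+,X^-\}$ requires you to rule out equivariant blow-ups of $X$, $X^+$ and $X^-$ other than those already in $S$; your Step 2 treats only blow-ups of $X^\pm$. The repair is available with the tools you already use --- the stabiliser $\langle\alpha^m\rangle$ acts transitively on the $r$ cage-germs at each cage vertex of each of the four graphs, so the only invariant separation of such a star peels the cage off the remaining germs, and the valence-$\ge 3$ constraint then forces every blow-up onto the list --- but as written the inductive closure is not established, so this step must be added.
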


\begin{proof}
	(1) is immediate, since $m$ and $r$ are coprime. 
	
	For (2), note that $G$ retracts onto $D_{2m}$, 
	and any conjugation in $D_{2m}$ 
	sends the $m$-cycle $\rho$ to $\rho$ or $\rho^{-1}$, 
	so $\alpha$ can only be conjugate to $\alpha^j$ 
	if $j \equiv \pm 1 \mod m$. 
	On the other hand, for $j \equiv \pm 1 \mod m$ we get 
	$\alpha^j = \big((\sigma^j, \dots, \sigma^j), \rho^{j}, 0)$ 
	which equals 
	\begin{equation*}
		\big((\tau, \dots, \tau), c, 0\big) \cdot 
		\big((\sigma,\dots,  \sigma), \rho, 0\big) \cdot
		\big((\tau^{-1}, \dots, \tau^{-1}), c^{-1}, 0\big)
	\end{equation*}
	where $\tau \sigma \tau^{-1} = \sigma^j$ in $\sym(r)$ 
	and $c \rho c^{-1} = \rho^j$ in $D_{2m}$;
	such $\tau \in \sym(m)$ exist 
	because $\sigma$ and $\sigma^j$ are both $r$-cycles, 
	and for $c$ we take $c=1$ if $j\equiv 1$ 
	and $c\not\in\<\rho\>$ if $j\cong -1$. 
	Thus $\alpha$ and $\alpha^j$ are conjugate in $G$, 
	since $\big((\tau^{-1},\dots, \tau^{-1}), c^{-1}, 1) = 
		\big((\tau,\dots,\tau), c, 1\big)^{-1}$. 
	
	The work of Krsti\'{c} \cite{krstic1989actions} 
	and Kristi\'{c}-Vogtmann \cite{krstic1993equivariant}, 
	discussed above, translates (3) 
	into a three-part assertion: 
	\begin{enumerate}[label=(\roman*)]
		\item 
			that $X$ does not contain 
			a non-trivial $\alpha$-invariant forest 
			(so no equivariant collapses are possible), 
		\item 
			that $X^+, X^-$ and $X^\pm$ are 
			the only $\Phi$-equivariant blow-ups of $X$, 
		\item 
			the only forest in each of these graphs 
			that is invariant under the action of 
			$\<\Phi\>\cong\Z/{mr}$
			is the one that was blown-up from $X$. 
			(Recall that $\Phi$ is acting as $\alpha$ on $X$, 
			and it acts as the obvious extension of $\alpha$ 
			on the other graphs.)
	\end{enumerate}

	Assertions (i) and (iii) are obvious from inspection. 
	For (ii), note that 
	the only $\alpha$-invariant sets of vertices in $X$ 
	are the vertex set of the outer cycle, 
	the vertex set of the inner cycle, 
	and the full vertex set, 
	so any equivariant blow-up has to be based on one of these
	sets. Moreover,  the stabiliser in $\<\alpha\>\cong \Z/mr$ of each vertex acts transitively on the edges of the $r$-cage
	incident at that vertex, so the endpoints of these edges cannot be separated in any equivariant blow-up. 
	Each vertex in a blow-up
	has to have valence at least three, so $X^+, X^-$ and $X^\pm$ are the only possibilities. 
\end{proof}

We dispense with the abbreviation $N=mr+1$. 

\begin{proposition}
\label{p:not-conj}
	$\Phi$ is conjugate to $\Phi^j$ in ${\rm{Out}}(F_{mr+1})$ 
	if and only if $j \equiv \pm 1 \mod m$. 
\end{proposition}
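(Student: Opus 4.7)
The plan is to combine Lemma \ref{l1}(2) with the fixed-point-set technology in the spine of Outer space due to Krsti\'{c} and Krsti\'{c}-Vogtmann. The `if' direction is immediate: given $\beta \in G = {\rm{Isom}}(X)$ with $\beta\alpha\beta^{-1} = \alpha^j$ as provided by Lemma \ref{l1}(2), the class $[\beta] \in \outN$ (via the fixed marking $\pi_1 X \cong F_{mr+1}$) conjugates $\Phi$ to $\Phi^j$.

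For the converse, I will assume $\psi\Phi\psi^{-1} = \Phi^j$ in $\outN$. Conjugate elements have equal order, forcing $\gcd(j, mr) = 1$, so $\langle\Phi\rangle = \langle\Phi^j\rangle$ and the fixed-point sets in the spine $K_N$ coincide: ${\rm{Fix}}(\Phi) = {\rm{Fix}}(\Phi^j)$. I then invoke the Krsti\'{c}-Vogtmann structure theorem \cite{krstic1993equivariant}, which presents ${\rm{Fix}}(\Phi)$ as the geometric realisation of the poset of marked geometric realisations of $\Phi$, ordered by equivariant collapse of invariant subforests. Lemma \ref{l1}(3) pins this poset down as the four-element set $\{X, X^+, X^-, X^\pm\}$ with $X$ as its unique minimum and $X^\pm$ as its unique maximum. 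The action of $\psi$ carries ${\rm{Fix}}(\Phi)$ to ${\rm{Fix}}(\Phi^j) = {\rm{Fix}}(\Phi)$ as a poset automorphism, and therefore fixes $X$, that is, $\psi\cdot X = X$ as points in the spine.

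It remains to transport this back to ${\rm{Isom}}(X)$. The stabiliser of $X$ in $\outN$ agrees, via the fixed marking, with the image of the natural map ${\rm{Isom}}(X) \to \outN$; this map is injective by a standard rigidity argument for finite connected graphs of rank $\ge 2$ whose vertices all have valence at least $3$ (our $X$ has every vertex of valence $r+2$). Thus $\psi = [\beta]$ for some $\beta \in G$, and the equality $[\beta\alpha\beta^{-1}] = [\alpha^j]$ in $\outN$ lifts, again by injectivity, to $\beta\alpha\beta^{-1} = \alpha^j$ in $G$. Lemma \ref{l1}(2) now forces $j \equiv \pm 1 \bmod m$.

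The main obstacle I anticipate is setting up the fixed-point-set argument with sufficient care: identifying $\psi \cdot X = X$ genuinely relies on $X$ being pinned down within the poset structure on ${\rm{Fix}}(\Phi)$, which in turn rests on both Lemma \ref{l1}(3) and the combinatorial fact that $X$ has strictly fewer cells than any of $X^+, X^-, X^\pm$. The injectivity of ${\rm{Isom}}(X) \hookrightarrow \outN$ is classical but deserves a precise citation (for instance to the work of Culler on finite subgroups of $\outN$ \cite{culler1984finite}).
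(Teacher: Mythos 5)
Your proof is correct and follows essentially the same route as the paper: both arguments reduce the conjugacy question to $G=\mathrm{Isom}(X)$ by observing that a conjugator normalises $\langle\Phi\rangle$, hence preserves its fixed-point set in the spine, and must fix the distinguished vertex $X$ (which you identify as the unique minimum of the four-element poset, and the paper as the only realisation with no invariant forest -- the same fact), after which Lemma~\ref{l1}(2) finishes the job. Your extra care in justifying that $\mathrm{Stab}(X)$ is the injective image of $\mathrm{Isom}(X)$ in $\mathrm{Out}(F_{mr+1})$ is a welcome elaboration of a step the paper leaves implicit.
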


\begin{proof} 
	If $j$ is not coprime to $mr$, 
	then $\Phi^j$ is not conjugate to $\Phi$ 
	because	they have different orders. 
	If $j$ is coprime to $mr$, then $\<\Phi^j\>=\<\Phi\>$. 
	Thus any element of $\out(F)$ 
	that conjugates $\Phi$ to $\Phi^j$ 
	will normalise $\<\Phi\>$ 
	and hence leave the fixed point set of $\<\Phi\>$ 
	in the spine of Outer Space invariant. 
	We have argued that this fixed point set 
	has four vertices, $X,\, X^+,\, X^-,\, X^\pm$, 
	with edges (representing equivariant forest-collapses) 
	connecting $X$ and $X^\pm$ to each other 
	and to the other vertices $X^+$ and $X^-$. 
	The distinctive feature of $X$ is that 
	it is the only graph with no invariant forest 
	and the action of $\out(F_{mr+1})$ doesn't change this. 
	Thus the normaliser 
	of $\<\Phi\>$ in ${\rm{Out}}(F_{mr+1})$ 
	must fix the vertex $X$, 
	which means that $\Phi$ and $\Phi^j$ 
	will be conjugate in ${\rm{Out}}(F_{mr+1})$ if and only if 
	they are conjugate in $G={\rm{Stab}}(X)$, 
	and \Cref{l1} applies. 
\end{proof}

Combining \Cref{p:not-conj} with \Cref{c:coprime-iso} we have: 

\begin{proposition} 
	Let $m\ge 5$ be an odd integer, 
	let $r$ be an odd integer coprime to $m$, 
	and let $\Phi = [\phi]\in {\rm{Out}}(F_{N})$ be as above, 
	with $N=mr+1$. 
	Then $F_N\rtimes_{\phi}\Z\cong F_N\rtimes_{\phi^2}\Z$ 
	but $\Phi$ is not conjugate	to $\Phi^2$ 
	and hence there is no isomorphism 
	$F_N\rtimes_{\phi}\Z\to F_N\rtimes_{\phi^2}\Z$ 
	preserving $F_N$.
\end{proposition}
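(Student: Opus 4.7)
The plan is to assemble the statement directly from the two preceding results, Corollary \ref{c:coprime-iso} and Proposition \ref{p:not-conj}, after checking the relevant numerical hypotheses, and then to deduce the final clause about isomorphisms preserving $F_N$ by a short algebraic argument.

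First I would verify that $k=2$ is coprime to the order of $[\phi]$. By Lemma \ref{l1}(1) the order of $\Phi$ in $\outN$ is $mr$, which is odd because both $m$ and $r$ are odd. Hence $\gcd(2,mr)=1$, and Corollary \ref{c:coprime-iso} gives $F_N\rtimes_\phi\Z\cong F_N\rtimes_{\phi^2}\Z$ at once. To rule out conjugacy of $\Phi$ and $\Phi^2$ in $\outN$, I would appeal to Proposition \ref{p:not-conj}: since $m\ge 5$, neither $2\equiv 1\pmod m$ (which would require $m\mid 1$) nor $2\equiv -1\pmod m$ (which would require $m\mid 3$) can hold, so $\Phi$ is not conjugate to $\Phi^2$ in $\outN$.

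It remains to upgrade this non-conjugacy to the statement that no isomorphism $\Theta\colon F_N\rtimes_\phi\Z\to F_N\rtimes_{\phi^2}\Z$ restricts to a self-map of $F_N$. Suppose such a $\Theta$ existed; since $F_N$ is preserved, $\Theta$ induces an automorphism of the quotient $\Z$, and so $\Theta(t)=xt^{\varepsilon}$ for some $x\in F_N$ and $\varepsilon\in\{1,-1\}$, where $\<t\>=1\rtimes\Z$. Writing $\psi=\Theta|_{F_N}$ and computing $\Theta(tyt^{-1})$ two ways shows that $\psi\circ\phi=\ad_x\circ\phi^{2\varepsilon}\circ\psi$, so $[\psi]$ conjugates $\Phi$ to $\Phi^{2\varepsilon}$ in $\outN$. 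But Proposition \ref{p:not-conj} also excludes conjugacy of $\Phi$ to $\Phi^{-2}$, since $-2\not\equiv\pm1\pmod m$ under the same hypothesis $m\ge 5$; this is the contradiction that completes the argument.

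There is no real obstacle here: all the content sits in Lemma \ref{l1} and Proposition \ref{p:not-conj}, and this last proposition is essentially a bookkeeping corollary that packages them. The only point requiring any care is the short commutator calculation showing that an $F_N$-preserving isomorphism between the two semidirect products forces the conjugacy relation $[\psi]\Phi[\psi]^{-1}=\Phi^{\pm 2}$; once that is on the table, the numerical conditions on $m$ finish the job.
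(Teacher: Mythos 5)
Your proposal is correct and follows essentially the same route as the paper, which obtains the proposition precisely by combining \Cref{c:coprime-iso} with \Cref{p:not-conj}. The extra detail you supply for the final clause (that an $F_N$-preserving isomorphism would force $[\psi]\Phi[\psi]^{-1}=\Phi^{\pm 2}$, both of which are excluded since $\pm 2\not\equiv\pm 1 \bmod m$ for $m\ge 5$) is exactly the standard argument the paper leaves implicit, and your verification that $2$ is coprime to the order $mr$ is where the hypothesis that $r$ is odd is used.
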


\begin{example} 
	We required $r$ to be odd in the preceding result 
	so that we could take $j=2$, 
	but to minimise the rank $N$ 
	we should take $m=5$ and $r=2$. 
	Then we obtain $[\phi]=\Phi\in\out(F_{11})$ of order $10$ 
	with $\Phi^3$ not conjugate to $\Phi$. 
	In this case, 
	$F_{11}\rtimes_\phi\Z$ and $F_{11}\rtimes_{\phi^3}\Z$
	are both isomorphic to 
	$$
	\< x, y, s, t \mid  [x,s]=1=[y,t],\ x^2=y^2 \>.
	$$
\end{example}

\subsection{
	Isomorphisms derived from non-conjugate cyclic subgroups}

We now consider a variant $X'$ of the graph $X$ 
in which the inner cycle is replaced 
by a cycle that connects cages a distance 2 apart; 
the case $m=5$ and $r=3$ is shown in \Cref{mr-graph-twisted}. 
This is not one of the graphs 
considered in the previous section,
so it does not support a geometric realisation 
of any conjugate of $\Phi\in {\rm{Out}}(F_{mr+1})$; 
in other words, 
it is not the underlying graph 
of a vertex in the spine of Outer Space fixed by $\Phi$. 

Like $X$, the graph $X'$ has an isometry of order $mr$ 
that rotates the picture by one click 
while permuting the edges of each $r$-cage 
with a cycle of length $r$; 
call this isometry $\beta$ 
and let $\Psi\in {\rm{Out}}(F_{mr+1})$ be 
the outer automorphism defined by $\beta$ 
(with some marking $\pi_1X'\cong F_{mr+1}$). 

The key feature of this construction is 
that the mapping cylinders of 
${\alpha : X\to X}$ and $\beta: X'\to X'$ are homeomorphic. 
The easiest way to see this 
is to recall from the proof of \Cref{prop:graph_of_Zs} 
the description of the mapping torus $M_\alpha$ 
as a graph of spaces. 
This decomposition depends only 
on the adjacencies of $\alpha$-orbits in $X$; 
and the pattern of adjacencies for the $\beta$-orbits in $X'$ 
is identical. 
In each case there are two orbits of vertices 
and three orbits of edges; 
the underlying integer-labelled graph 
has two loops with ends labelled $1$ 
connected by an edge that has both ends labelled $r$. 
Thus both mapping cylinders 
are copies of the standard 2-complex for the presentation 
$$
	F_{mr+1}\rtimes_\phi\Z\cong F_{mr+1}\rtimes_\psi\Z \cong 
	\< x,y,s,t \mid [x,s]=1=[y,t],\, x^{r}=y^{r}\>.
$$
But $\<\Phi\>\cong\Z/mr$ 
is not conjugate to $\<\Psi\>\cong\Z/mr$ 
in ${\rm{Out}}(F_{mr+1})$. \qed

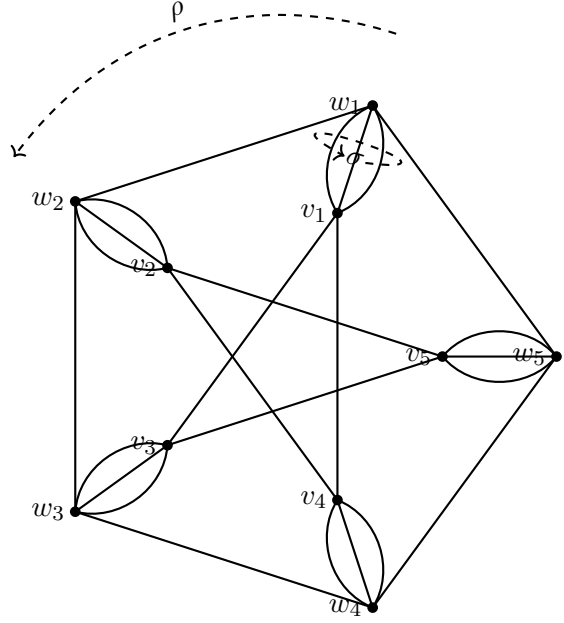
\begin{figure}
	\begin{center}
		\begin{tikzpicture}
	
	% Define the number of sides of the polygons
	\def\m{5}
	\def\r{3}
	
	% Define the excess skip in the outer ring
	\def\skip{1}
	
	% Define the radii of the two polygons
	\def\radiusA{2}
	\def\radiusB{3.5}
	\def\radiusC{4.5} % Additional radius for the arc
	\def\radiusD{2.75}
	
	% Define the maximum bend angle
	\def\bigangle{50}
	
	% Draw the first polygon (inner ring)
	\foreach \i in {1,...,\m} {
		\coordinate (A\i) at (360/\m * \i: \radiusA);
		\fill (A\i) circle (2pt);
		\node[left] at (A\i) {$v_{\i}$}; % Label for inner ring vertices
	}
	
	% Draw the second polygon (outer ring)
	\foreach \i in {1,...,\m} {
		\coordinate (B\i) at (360/\m * \i: \radiusB);
		\fill (B\i) circle (2pt);
		\node[left] at (B\i) {$w_{\i}$}; % Label for outer ring vertices
	}
	
	% Connect corresponding vertices with r curved edges
	\foreach \i in {1,...,\m} {
		\foreach \j in {1,...,\r} {
			% Adjust the bend angle for each edge
			\pgfmathsetmacro{\angle}{-\bigangle + 2*\bigangle*(\j-1)/(\r-1)}
			\draw[thick] (A\i) to [bend left=\angle] (B\i);
		}
	}
	
	% Draw the edges of the polygons with a skip in the inner ring
	\foreach \i in {1,...,\m} {
		\pgfmathtruncatemacro{\nexti}{mod(\i,\m) + 1}
		\pgfmathtruncatemacro{\skippednexti}{mod(\i+\skip,\m) + 1}
		\draw[thick] (A\i) -- (A\skippednexti);  % Inner ring edges
		\draw[thick] (B\i) -- (B\nexti);         % Outer ring edges
	}
	
	% Define new points further out for the arc
	\foreach \i in {1,2} {
		\coordinate (C\i) at (360/\m * \i: \radiusC);
	}
	
	% Add a curved arc indicating a rotation by 1 click, further from the center
	\draw[thick, ->, bend right=180/\m, dashed] (C1) to node[midway, above, sloped] {$\rho$} (C2);
	
	\draw[thick, ->, dashed, bend right=(180-0.3*180/\m), looseness=15] 
	(360/\m - 3: \radiusD) to node[midway, below, sloped] {$\sigma$} (360/\m + 3: \radiusD);
	
\end{tikzpicture}
	\end{center}
	\caption{The twisted graph $X$.}
	\label{mr-graph-twisted}
\end{figure}

\bigskip

We close by noting that 
the order of $[\phi]$ is an invariant of $F_N\rtimes_\phi\Z$, 
provided that $N$ is fixed. 
This is a strong condition, 
as it was shown by Button \cite{button2007mapping} 
that a free-by-cyclic group with first Betti number $\ge 2$ 
is isomorphic to $F_M \rtimes \Z$ 
for infinitely many $M$. 

\begin{proposition}
\label{p:order-only}
	If $F_N\rtimes_\phi\Z\cong F_N\rtimes_\psi\Z$ and $[\phi]\in\outN$ has order $m$, then $[\psi]\in\outN$ has order $m$
\end{proposition}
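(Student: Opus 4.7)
The plan is to exploit the fact that $G:=\G/Z(\G)$ is intrinsic to $\G$, and that in each of the two semidirect decompositions the visible free fibre $F_N$ descends to a torsion-free finite-index subgroup of $G$ whose index records the order of the corresponding monodromy. Invariance of the virtual Euler characteristic will then force the two indices, $m$ and $m'$, to coincide.

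Assume $N\ge 2$; the case $N=1$ is immediate, since $F_1\rtimes\Z$ is either $\Z^2$ or the Klein bottle group, and these are distinguished by being abelian. Because $[\phi]$ has finite order, \Cref{free-by-cyclic-with-centre} gives $Z(\G_1)\neq 1$, so under $\G_1\cong\G_2$ we have $Z(\G_2)\neq 1$, and \Cref{free-by-cyclic-with-centre} then forces $[\psi]$ to have some finite order $m'$.

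Next, I would pin down the index of each copy of $F_N$ in $G$. Any isomorphism $\G_1\cong\G_2$ sends $Z(\G_1)$ to $Z(\G_2)$, so it descends to an isomorphism of quotients which I identify as $G$. \Cref{free-by-cyclic-with-centre} provides a generator of $Z(\G_i)$ of the form $x_i t_i^{m_i}$, where $t_i$ generates the visible $\Z$-factor, $x_i\in F_N$, and $m_1=m$, $m_2=m'$; hence $Z(\G_i)$ surjects onto $m_i\Z$ under $\G_i/F_N\cong\Z$. Since $Z(F_N)=1$ for $N\ge 2$, we have $F_N\cap Z(\G_i)=1$, so $F_N$ embeds into $G$ as a free subgroup of rank $N$ and index $m_i$.

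Finally, I would invoke invariance of the virtual Euler characteristic. The group $G$ is virtually free, hence has a well-defined rational virtual Euler characteristic $\chi_{\mathrm{virt}}(G)$, and every torsion-free finite-index subgroup $H\le G$ of rank $r$ and index $k$ satisfies $(1-r)/k=\chi_{\mathrm{virt}}(G)$. Applied to the two copies of $F_N$ this yields $(1-N)/m=(1-N)/m'$, and $N\ge 2$ makes $1-N\neq 0$, so $m=m'$. The main delicate point in the argument is the index computation: since $Z(\G)$ contains the \emph{twisted} element $xt^m$ rather than a pure power of $t$, one must verify that $F_N\cdot Z(\G)=F_N\cdot\<t^m\>$ (which holds because $x\in F_N$) to conclude that the index is exactly $m$ and not some proper divisor; once this is clear, the Euler characteristic step is essentially formal.
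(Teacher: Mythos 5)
Your argument is correct and is essentially the paper's own proof, just written out in more detail: both reduce to the observation that $F_N$ embeds in $\G/Z(\G)$ with index equal to the order of the monodromy, and then invoke the invariance of the rational Euler characteristic $(1-N)/m$ of the virtually free group $\G/Z(\G)$. Your extra care over the finiteness of the order of $[\psi]$ and over the index computation (via the form $x_0t^m$ of the central generator from \Cref{free-by-cyclic-with-centre}) fills in steps the paper leaves implicit.
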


\begin{proof} 
	Let $\G=F_N\rtimes_\phi\Z$.
	It follows from \Cref{free-by-cyclic-with-centre} 
	that $\G/Z(\G)$ contains $F_N$ as a subgroup of index $m$, 
	so the rational Euler characteristic of $\G/Z(\G)$ 
	is $(1-N)/m$. 
\end{proof}

%%%%%%%%%%%%%%%%%%%%%% REFERENCES HERE

\bibliographystyle{alpha}
\bibliography{Bibliography}

\begin{thebibliography}{BMRS21}

\bibitem[AP24]{AP}
Naomi Andrews and Irakli Patchkoria.
\newblock {On the Farrell--Tate K-Theory of ${{\rm{Out}}}(F_n)$}.
\newblock {\em In preparation}, 2024.

\bibitem[Bau69]{baumslag1969non}
Gilbert Baumslag.
\newblock A non-cyclic one-relator group all of whose finite quotients are
  cyclic.
\newblock {\em J Aust. Math. Soc.}, 10(3-4):497--498, 1969.

\bibitem[Bau74]{baumslag1974residually}
Gilbert Baumslag.
\newblock Residually finite groups with the same finite images.
\newblock {\em Compositio Mathematica}, 29(3):249--252, 1974.

\bibitem[BCR16]{bridson2013determining}
M.~R. Bridson, M.~D.~E. Conder, and A.~W. Reid.
\newblock Determining {F}uchsian groups by their finite quotients.
\newblock {\em Israel J. Math.}, 214(1):1--41, 2016.

\bibitem[Bes02]{bestvina2003topology}
Mladen Bestvina.
\newblock The topology of {${\rm Out}(F_n)$}.
\newblock In {\em Proceedings of the {I}nternational {C}ongress of
  {M}athematicians, {V}ol. {II} ({B}eijing, 2002)}, pages 373--384. Higher Ed.
  Press, Beijing, 2002.

\bibitem[BMRS21]{bridson2021profinite}
Martin~R. Bridson, D.B. McReynolds, Alan~W. Reid, and Ryan Spitler.
\newblock On the profinite rigidity of triangle groups.
\newblock {\em Bull. London Math. Soc.}, 53(6):1849--1862, 2021.

\bibitem[BMT07]{BaumMillerTroeger}
Gilbert Baumslag, Charles~F. Miller, III, and Douglas Troeger.
\newblock Reflections on the residual finiteness of one-relator groups.
\newblock {\em Groups Geom. Dyn.}, 1(3):209--219, 2007.

\bibitem[BRW17]{bridson2017profinite}
Martin~R. Bridson, Alan~W. Reid, and Henry Wilton.
\newblock Profinite rigidity and surface bundles over the circle.
\newblock {\em Bulletin of the London Mathematical Society}, 49(5):831--841,
  2017.

\bibitem[BT67]{baumslag1967centre}
Gilbert Baumslag and Tekla Taylor.
\newblock The centre of groups with one defining relator.
\newblock {\em Math. Ann.}, 175(4):315--319, 1967.

\bibitem[But07]{button2007mapping}
Jack~O. Button.
\newblock Mapping tori with first betti number at least two.
\newblock {\em J Math. Soc. Japan}, 59(2):351--370, 2007.

\bibitem[BV06]{bridson2006automorphism}
Martin~R. Bridson and Karen Vogtmann.
\newblock Automorphism groups of free groups, surface groups and free abelian
  groups.
\newblock In {\em Problems on mapping class groups and related topics},
  volume~74 of {\em Proc. Sympos. Pure Math.}, pages 301--316. Amer. Math.
  Soc., Providence, RI, 2006.

\bibitem[CHMV]{corson2023higman}
Samuel~M. Corson, Sam Hughes, Philip M{\"o}ller, and Olga Varghese.
\newblock Higman-{T}hompson groups and profinite properties of right-angled
  {C}oxeter groups.
\newblock {\em arXiv:2309.06213}.

\bibitem[CL16]{cashenlevitt}
Christopher~H. Cashen and Gilbert Levitt.
\newblock Mapping tori of free group automorphisms, and the
  {B}ieri-{N}eumann-{S}trebel invariant of graphs of groups.
\newblock {\em J. Group Theory}, 19(2):191--216, 2016.

\bibitem[Cul84]{culler1984finite}
Marc Culler.
\newblock Finite groups of outer automorphisms of a free group.
\newblock In {\em Contributions to group theory}, volume~33 of {\em Contemp.
  Math.}, pages 197--207. Amer. Math. Soc., Providence, RI, 1984.

\bibitem[CV86]{culler1986moduli}
Marc Culler and Karen Vogtmann.
\newblock Moduli of graphs and automorphisms of free groups.
\newblock {\em Invent. Math.}, 84(1):91--119, 1986.

\bibitem[CZ15]{chagas2015subgroup}
S.~C. Chagas and P.~A. Zalesskii.
\newblock Subgroup conjugacy separability of free-by-finite groups.
\newblock {\em Arch. Math. (Basel)}, 104(2):101--109, 2015.

\bibitem[For02]{forester2002deformation}
Max Forester.
\newblock Deformation and rigidity of simplicial group actions on trees.
\newblock {\em Geom. Topol.}, 6(1):219--267, 2002.

\bibitem[Fun13]{funar2013torus}
Louis Funar.
\newblock {Torus bundles not distinguished by TQFT invariants}.
\newblock {\em Geom. Topol.}, 17(4):2289--2344, 2013.

\bibitem[Gab02]{gaboriau}
Damien Gaboriau.
\newblock Invariants {$l^2$} de relations d'\'equivalence et de groupes.
\newblock {\em Publ. Math. Inst. Hautes \'Etudes Sci.}, 95:93--150, 2002.

\bibitem[Gro74]{grossman1974residual}
Edna~K Grossman.
\newblock On the residual finiteness of certain mapping class groups.
\newblock {\em Journal of the London Mathematical Society}, 2(1):160--164,
  1974.

\bibitem[GZ11]{grunewald2011genus}
Fritz Grunewald and Pavel Zalesskii.
\newblock Genus for groups.
\newblock {\em J. Alg.}, 326(1):130--168, 2011.

\bibitem[Hem14]{hempel2014some}
John Hempel.
\newblock Some 3-manifold groups with the same finite quotients.
\newblock {\em arXiv preprint arXiv:1409.3509}, 2014.

\bibitem[Hig51]{higman1951finitely}
Graham Higman.
\newblock A finitely generated infinite simple group.
\newblock {\em J London Math. Soc.}, 1(1):61--64, 1951.

\bibitem[HK23]{hughes2023profinite}
Sam Hughes and Monika Kudlinska.
\newblock On profinite rigidity amongst free-by-cyclic groups {I}: the generic
  case.
\newblock {\em arXiv preprint arXiv:2303.16834}, 2023.

\bibitem[Khr90]{khramtsov}
D.~G. Khramtsov.
\newblock Outer automorphisms of free groups.
\newblock In {\em Group-theoretic investigations ({R}ussian)}, pages 95--127.
  Akad. Nauk SSSR Ural. Otdel., Sverdlovsk, 1990.

\bibitem[Krs89]{krstic1989actions}
Sava Krsti{\'c}.
\newblock Actions of finite groups on graphs and related automorphisms of free
  groups.
\newblock {\em J Alg.}, 124(1):119--138, 1989.

\bibitem[KV93]{krstic1993equivariant}
Sava Krsti{\'c} and Karen Vogtmann.
\newblock Equivariant outer space and automorphisms of free-by-finite groups.
\newblock {\em Comm. Math. Helv.}, 68(1):216--262, 1993.

\bibitem[Lev15]{levitt2015generalized}
Gilbert Levitt.
\newblock Generalized baumslag--solitar groups: rank and finite index
  subgroups.
\newblock {\em Ann. l'Instit. Fourier}, 65(2):725--762, 2015.

\bibitem[Liu23]{liu2023finite}
Yi~Liu.
\newblock Finite-volume hyperbolic 3-manifolds are almost determined by their
  finite quotient groups.
\newblock {\em Invent. {M}ath.}, 231(2):741--804, 2023.

\bibitem[L{\"u}c94]{luck1994approximating}
Wolfgang L{\"u}ck.
\newblock Approximating l 2-invariants by their finite-dimensional analogues.
\newblock {\em Geom. Functi. Anal. (GAFA)}, 4:455--481, 1994.

\bibitem[Mag32]{magnus}
Wilhelm Magnus.
\newblock Das identita\"{a}tsproblem fu\"{u}r gruppen mit einer definierenden
  relation.
\newblock {\em Math. Ann.}, 106:295–307, 1932.

\bibitem[NS03]{nikolov2003finite}
Nikolay Nikolov and Dan Segal.
\newblock Finite index subgroups in profinite groups.
\newblock {\em Comptes Rendus Mathematique}, 337(5):303--308, 2003.

\bibitem[Rib17]{ribes2017profinite}
Luis Ribes.
\newblock {\em Profinite graphs and groups}.
\newblock Springer, 2017.

\bibitem[RZ14]{ribes2014normalizers}
Luis Ribes and Pavel~A. Zalesskii.
\newblock Normalizers in groups and in their profinite completions.
\newblock {\em Rev. Mat. Iberoamericana}, 30(1):165--190, 2014.

\bibitem[Ser64]{serre1964exemples}
Jean-Pierre Serre.
\newblock Exemples de vari{\'e}t{\'e}s projectives conjugu{\'e}es non
  hom{\'e}omorphes.
\newblock {\em C.R. Acad. Sci.}, 258(17):4194--4196, 1964.

\bibitem[Ser80]{serre1980trees}
Jean-Pierre Serre.
\newblock {\em Trees}.
\newblock Springer-Verlag, Berlin-New York, 1980.
\newblock Translated from the French by John Stillwell.

\bibitem[Ste72]{stebe1972conjugacy}
P.F. Stebe.
\newblock Conjugacy separability of certain {F}uchsian groups.
\newblock {\em Trans. American Math. Soc.}, 163:173--188, 1972.

\bibitem[SW79]{scott1979topological}
Peter Scott and Terry Wall.
\newblock Topological methods in group theory.
\newblock In {\em Homological group theory (Proc. Sympos., Durham, 1977)},
  volume~36, pages 137--203, 1979.

\end{thebibliography}

\end{document}